\newcommand{\Aut}{\mbox{{\rm Aut}}}
\newtheorem{thm}{Theorem}[section]
\newtheorem{pro}[thm]{Proposition}
\newtheorem{lem}[thm]{Lemma}
\newtheorem{cor}[thm]{Corollary}
\newtheorem{ex}[thm]{Example}
\newtheorem{df}[thm]{Definition}
\newtheorem{rem}[thm]{Remark}
\newcommand{\st}{\mbox{{\rm St}}}
\newcommand{\ib}{\mbox{{\rm ib}}}
\newcommand{\St}{\mbox{{\rm St}}}
\newcommand{\aut}{\mbox{{\rm Aut}}}
\begin{document}
\selectlanguage{english}

\title{The Roelcke Precompactness and Compactifications of Transformations Groups of Discrete Spaces and Homogeneous Chains}

\author{B. V. Sorin}

\date{}

 \maketitle
 
 \begin{abstract}
 The Roelcke precompactness of transformation groups of discrete spaces and chains in the permutation topology and LOTS  in the topology of pointwise convergence is studied. For ultratransitive actions compactifications of transformation groups using the Ellis construction are built.
 \end{abstract}
 
\section{Introduction and preliminary remarks} 

Studying "big" topological groups, we face the problem of evaluating their  "smallness" in a sense \cite{Pestov}. In particular, one is to determine a uniformity on a group compatible with a group structure, which would be totally bounded. The Roelcke uniformity is the lower uniformity on the group (the greatest lower bound of the right and left uniformities) \cite{RD}. 

V. Uspenskij initiated the search of Roelcke-compactifications of topological transformation groups by considering homeomorphisms as points of hyperspaces (graphs in a squared space) \cite{8,U2001,U2008}. T. Tsankov \cite{Tsan} gave characterization of Roelcke precompact subgroups of permutation groups of countable discrete spaces $X$ in the permutation topology (pointwise convergence topology where $X$ is discrete) using group oligomorphism. 

An approach to the question when, under the action, the Roelcke precompactness of the acting group follows from total boundedness of the maximal equiuniformity on a phase space can be found in \cite[Proposition 9.17]{RD} and \cite[Proposition 6.4]{Megr}. The construction of an enveloping Ellis semigroup (for transformation groups on which the topology of pointwise convergence is an admissible group topology) is used in  \cite{Sorin}.  This construction allows connecting the maximal equiuniformity on a phase space with the Roelcke uniformity via an intermediate equiuniformity determined by a system of small subgroups introduced in \cite{Kozlov}. The latter uniformity coincides with the Roelcke uniformity in the case of the permutation topology on a group.

The work studies the Roelcke precompactness of transformation groups of discrete spaces and chains in the permutation topology and LOTS  in the topology of pointwise convergence. For ultratransitive actions compactifications of transformation groups using the Ellis construction are built.

The main techniques used to obtain results of the work are presented in \S2. 
R. Ellis proposed a construction of building a compactification of a transformation group of a compactum, which is {\it an enveloping Ellis semigroup}~{\rm\cite{Ellis}} (a semigroup with multiplication continuous on the right and continuous on the left for continuous mappings). In {\rm\cite{Sorin}} the Ellis construction was used for building compactifications of transformation groups of compacta, for which the topology of pointwise convergence was an admissible group topology, as well as for finding a sufficient condition of their Roelcke precompactness. In \S2  the Ellis construction extends to the homeomorphism group $G$ of a non-compact space $X$. The condition of the compactness of the $G$-space $X$ is replaced by the condition of $X$ is $G$--Tykhonoff, i.e. the presence of an equiuniformity on $X$.

A sufficient condition, under which a uniformity on a group generated by its embedding in the product of uniform spaces (the usage of the Ellis construction) and the uniformity built on a family of small subgroups (point stabilizers) \cite[\S4]{Kozlov} coincide, is found in Theorem \ref{Roelcke precomp2-1}. The fact that the latter uniformity is totally bounded ensures the Roelcke precompactness of a group \cite[Corollary 4.5]{Kozlov}. The combination of these two facts allows obtaining a sufficient condition of the Roelcke precompactness of groups in Corollary \ref{Roelcke precomp2-2}. 

The topology of pointwise convergence is the smallest admissible group topology. This fact allows studying the question of the Roelcke precompactness of a transformation group in the permutation topology (Corollary \ref{top_precomp}). 

The Roelcke precompactness of subgroups of permutations groups of discrete spaces is studied in \S3. The equality of Roelcke uniformity and uniformity built on the family of point stabilizers is established (Proposition \ref{coin}).

Theorems \ref{Roelcke precomp3-1} and \ref{Roelcke precomp3-2} present sufficient conditions and a criterion for the Roelcke precompactness of subgroups of permutations groups of discrete spaces in the permutation topology using maximal equiuniformities on phase spaces and their connection with oligomorphism of actions. 

Theorem \ref{Roelcke precomp3-3}  provides a complete solution to the question of the Roelcke precompactness of automorphism groups of simple chains. It is restated as follows: 

\noindent {\bf Corollary \ref{Roelcke precomp3-4}}. Let $X$ be a simple chain.

(1) $X$ is rigid $\Longleftrightarrow$ the group $\aut(X)$ is not Roelcke precompact in any admissible group topology for its action on the corresponding $X$ homogeneous GO-spaces. 
 
(2) $X$ is ultrahomogeneous $\Longleftrightarrow$ the group $(\aut(X), \tau_{\partial})$ is Roelcke precompact $\Longleftrightarrow$ the group $(\aut(X), \tau_p)$ is Roelcke precompact.

(3) The group $(\aut(X), \tau_{\partial})$ is Roelcke precompact iff the group $(\aut(X), \tau_p)$ is Roelcke precompact.  

The Roelcke precompactness of ultratransitive subgroups of an automorphism group of ultrahomogeneous (and cyclic) chains in the topology of pointwise convergence (the permutation topology in terms of this paper) is proved in \cite[Proposition 6.6]{Megr}. Examples \ref{Roelcke precomp3-5} show the possibilities of using results from \S3.

A sufficient condition of equality between the Roelcke uniformity and the uniformity obtained using the Ellis construction (Theorem \ref{Roelcke precomp4-1}) is given in \S4. This result made it possible to show (Corollary \ref{Roelcke precomp4-2}) that Roelcke compactification of ultrahomogeneous transformation group in permutation topology is an enveloping Ellis semigroup.  Corollary \ref{Roelcke precomp4-3} describes the (Roelcke) compactifications of automorphism groups of ultrahomogeneous chains which are enveloping Ellis semigroups in the case of continuous chains. It uses compactifications of their phase spaces outlined in Lemma \ref{comp}. The construction of compactifications of chains from \cite{Fed} and the construction of the transition from a GO-space to a linearly ordered space from \cite{Miwa} are used. The proof of point (2) of Corollary \ref{Roelcke precomp4-3} is an extension of Theorem 3 \cite{Sorin} from a compact case to a case of ultrahomogeneous chains.

See \cite[\S4]{8} for more information about algebraic structures on Roelcke compactifications. 

The results of studying the Roelcke precompactness of automorphism groups of chains that are not simple are presented in \S5. Theorem \ref{LexO_Aut} describes the structure of automorphism groups of chains that are not simple. They are semidirect (topological) products. Theorem 8 from \cite{Sorin} is used. 

In Theorems \ref{Roelcke precomp5-2} and \ref{Roelcke precomp5-3} characterizations of the Roelcke precompactness of automorphism groups of chains that are not simple are given using their structure. 

From the constructed inverse spectrum in the proof of Theorem \ref{Roelcke precomp5-3} it is clear that the topology of pointwise convergence on the automorphism group of a LOTS corresponding to the chain that is not simple and does not contain simple proper regular intervals is "approximated" by permutation topologies on the automorphism groups of its quotient spaces (by the equivalence relation that is determined by a regular interval). 

Corollaries \ref{coin2} and \ref{Roelcke precomp3-4} solve the problem of the equivalence of the condition of the Roelcke precompactness of groups 
$(\aut (X), \tau_p)$ and $(\aut(X), \tau_{\partial})$ in the case of homogeneous chains that are either simple or have a simple proper regular interval. 

In preliminary remarks, the consideration of homogeneous GO-spaces provides motivation for the study of the permutation topology. Maximal equivariant compactifications of ultrahomogeneous LOTS and discrete chains are also constructed. 

The author is grateful to Prof. K. L. Kozlov, Prof. V. G. Pestov and Prof. M. G. Megrelishvili for providing useful information related to the subject of the study. 

Non-empty sets, (topological) Hausdorff spaces are considered. The uniformities on the space are compatible with its topology. Terminology and notations from \cite{Engelking} and \cite{RD} are used. $\mathbb Q$ are rational, $\mathbb P$ are irrational, and $\mathbb R$ are real numbers. For a family $\Omega$ of subsets $X$ and $Y\subset X$, $\Omega\wedge Y=\{O\cap Y\ |\ O\in\Omega\}$. $N_G(e)$ is a family of open neighbourhoods of the unit of the topological group $G$.

\subsection{Basic properties of Roelcke precompactness}
All the necessary information about the Roelcke uniformity $L\wedge R$ on a group $G$ can be found in \cite{RD}. A group  $G$ is Roelcke precompact if the Roelcke uniformity is totally bounded. Let us recall the main properties of Roelcke precompactness of topological groups.

{\bf Facts 1.} (1) A dense subgroup  $H$ of a group  $G$ is Roelcke precompact iff the group $G$ is Roelcke precompact {\rm\cite[Proposition 3.24]{RD}} and {\rm\cite[Proposition 2.2]{Tsan}}. Moreover, Roelcke compactifications of $H$ and $G$ are isomorphic \cite[Corollary 8.3.11]{Engelking}.

(2) An open subgroup of a Roelcke precompact group is Roelcke precompact {\rm\cite[Proposition 3.24]{RD}}.

(3) A continuous homomorphic image of a Roelcke precompact group is a Roelcke precompact group {\rm\cite[Proposition 2.2]{Tsan}}.

(4) If the normal subgroup $H$ and the factor group $G/H$ of the group $G$ are Roelcke precompact, then the group $G$ is Roelcke precompact {\rm\cite[Proposition 2.2]{Tsan}}.

(5) The inverse limit of inverse spectrum of Roelcke precompact groups and homomorphisms is Roelcke precompact {\rm\cite[Proposition 2.2]{Tsan}}.

(6) A product of topological groups is Roelcke precompact iff the factors are Roelcke precompact groups {\rm\cite[Proposition 3.35]{RD}}.

If two group topologies $\sigma\leq\tau$ are given on the group $G$, then from setting the bases of the right, left, two-sided, Roelcke uniformities it follows that $R_{\sigma}\subset R_{\tau}$, $L_{\sigma}\subset  L_{\tau}$,  $(L\vee R)_{\sigma}\subset  (L\vee R)_{\tau}$,  $(L\wedge R)_{\sigma}\subset  (L\wedge R)_{\tau}$ for uniformities on a group in the corresponding topologies. 
 
We consider infinite cardinals $\kappa$. The smallest cardinal $\kappa$ such that the uniformity $\mathcal U$ has a basis of coverings of cardinality no more than $\kappa$ is called the narrowness index $\ib(\mathcal U)$ of the uniformity $\mathcal U$. The concept was introduced by I. Guran \cite{Gur}, and it is called index of boundedness in \cite[Ch. 1, \S1]{Borub}. The uniformity $\mathcal U$ is totally bounded if there exists a basis of finite coverings.
 
\begin{lem} \label{top_uniform}
If two group topologies $\sigma\leq\tau$ are given on the group $G$, then  

{\rm(1)} $\ib (R_{\sigma})\leq\ib (R_{\tau}), \ib (L_{\sigma})\leq\ib (L_{\tau}), \ib (L\vee R)_{\sigma}\leq\ib (L\vee R)_{\tau}, \ib (L\wedge R)_{\sigma}\leq\ib (L\wedge R)_{\tau}.$
 
{\rm (2)} From the totally boundedness of uniformity $R_{\tau}$ {\rm(}respectively $L_{\tau}$,  $(L\vee R)_{\tau}$,  $(L\wedge R)_{\tau}${\rm)} it follows that the uniformity $R_{\sigma}$ {\rm(}respectively  $L_{\sigma}$,  $(L\vee R)_{\sigma}$,  $(L\wedge R)_{\sigma}${\rm)} is totally bounded.~\hfill $\square$
\end{lem}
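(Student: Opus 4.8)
The plan is to reduce everything to the inclusions of uniformities recorded just before the statement, together with a single monotonicity principle for the index of boundedness and for total boundedness.

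First I would recall why $\sigma\le\tau$ forces $R_{\sigma}\subset R_{\tau}$, $L_{\sigma}\subset L_{\tau}$, $(L\vee R)_{\sigma}\subset (L\vee R)_{\tau}$ and $(L\wedge R)_{\sigma}\subset (L\wedge R)_{\tau}$: since $\sigma\le\tau$, every $\sigma$-neighbourhood of $e$ is a $\tau$-neighbourhood of $e$, so the basic entourages $\{(x,y):xy^{-1}\in U\}$ (respectively $\{(x,y):x^{-1}y\in U\}$) that define $R_{\sigma}$ (respectively $L_{\sigma}$) from $\sigma$-neighbourhoods $U$ are among those defining $R_{\tau}$ (respectively $L_{\tau}$); taking meets and joins of the defining bases yields the same inclusion for $L\wedge R$ and $L\vee R$. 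This is exactly the display preceding the lemma, so I would simply cite it rather than reprove it.

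The core step is the following monotonicity principle: if $\mathcal U\subseteq\mathcal V$ are two uniformities on a set $X$, then $\ib(\mathcal U)\le\ib(\mathcal V)$, and if $\mathcal V$ is totally bounded then so is $\mathcal U$. The cleanest route is the standard entourage reformulation of the narrowness index: $\ib(\mathcal U)\le\kappa$ iff for every $U\in\mathcal U$ there is $A\subseteq X$ with $|A|\le\kappa$ and $U[A]=X$ (with total boundedness being the same condition with $A$ finite). Granting this reformulation, monotonicity is immediate, since every $U\in\mathcal U$ also belongs to $\mathcal V$: a set $A$ of the required cardinality witnessing the condition in $\mathcal V$ witnesses it in $\mathcal U$ as well, whence $\ib(\mathcal U)\le\ib(\mathcal V)$, and the finite case gives the transfer of total boundedness. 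I would verify the reformulation against the covering-basis definition the paper uses by passing from a covering of cardinality $\le\kappa$ refining $\{U[x]:x\in X\}$ to the index set $A$ of a selected point in each member, and conversely.

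Combining the two ingredients finishes both parts: applying monotonicity to each of the four pairs $R_{\sigma}\subset R_{\tau}$, $L_{\sigma}\subset L_{\tau}$, $(L\vee R)_{\sigma}\subset (L\vee R)_{\tau}$, $(L\wedge R)_{\sigma}\subset (L\wedge R)_{\tau}$ gives the four inequalities of (1), and the total-boundedness half of the principle gives (2). I expect the only genuine subtlety to lie in the covering-basis definition itself: a covering drawn from a small basis of $\mathcal V$ that refines a $\mathcal U$-uniform covering need not itself be $\mathcal U$-uniform, so one cannot transport bases directly and must instead argue through the entourage/density reformulation above (or, equivalently for these group uniformities, through Guran's criterion $G=UA$ with $|A|\le\kappa$ applied to the respective neighbourhood bases of $e$). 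Everything else is routine bookkeeping.
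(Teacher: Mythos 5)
Your proposal is correct and takes essentially the same route as the paper, which in fact states the lemma without any proof (the terminal $\square$ signals that it is regarded as an immediate consequence of the displayed inclusions $R_{\sigma}\subset R_{\tau}$, $L_{\sigma}\subset L_{\tau}$, $(L\vee R)_{\sigma}\subset (L\vee R)_{\tau}$, $(L\wedge R)_{\sigma}\subset (L\wedge R)_{\tau}$). What you add --- the monotonicity of $\ib$ and of total boundedness under inclusion of uniformities, proved via the entourage characterization of $\ib$, including the correct observation that one cannot transport covering bases directly --- is exactly the detail the paper leaves implicit, and your verification of it is sound.
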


\subsection{Topologization of a transformation group}
The action of the group $G$ on the set $X$ is called effective if the kernel of the action $\{g\in G\ |\ g(x)=x, \forall x\in X\}$ is the unit of $G$. If $G$ effectively acts on $X$, then $G\subset {\rm S}(X)$, where ${\rm S}(X)$ is the permutation group of $X$. 

The subgroup ${\rm St}_{x_1,\ldots, x_n}=\{g\in G\ |\ g(x_i)=x_i, i=1,\ldots, n\}$ is a stabilizer of the group $G$.

If $X$ is a topological space, ${\rm Hom}(X)$ is the group of its homeomorphisms, then the group $G$ effectively acts on the space $X$ if $G\subset {\rm Hom}(X)$. If $X$ is a discrete space, then ${\rm Hom}(X)={\rm S}(X)$.

A topology in which a group is a topological group and its action $G\curvearrowright X$ is continuous is called an admissible group topology \cite{Arens} on the group $G$ effectively acting on the topological space $X$. 

For an effective action of the group $G$ on the discrete space $X$, the permutation topology $\tau_{\partial}$, the subbase of the neighbourhoods of whose unit is formed by open-closed subgroups, i.e. stabilizers of points, is the smallest admissible group topology. The group $G$ in the permutation topology is non-Archimedean (a unit neighbourhood base is formed by open-closed subgroups). 

If topology of pointwise convergence $\tau_p$ (the subbase is formed by sets of the form $[x,O]=\{f\in G\ |\ f(x)\in O\}$, $O$ is open in $X$) is an admissible group topology on the group $G$ of homeomorphisms of the space $X$ acting effectively, then it is the smallest admissible group topology \cite[Lemma 3.1]{Kozlov}, $\tau_{\partial}\geq\tau_p$, $\tau_{\partial}$ is an admissible group topology and  $\tau_{\partial}=\tau_p$, if $X$ is a discrete space.

From Lemma \ref{top_uniform} we have  
 
\begin{cor}\label{top_precomp} 
If $(G, \tau_{\partial})$ is Roelcke precompact, then  $(G, \tau_{p})$ is Roelcke precompact.
 
If $(G, \tau_p)$ is not Roelcke precompact, then the group $G$ in any admissible group topology is also not Roelcke precompact.\hfill $\square$
\end{cor}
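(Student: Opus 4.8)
The plan is to reduce both assertions directly to Lemma \ref{top_uniform}(2), applied to the Roelcke uniformity $L\wedge R$, using the ordering relations among admissible group topologies recalled in the preceding paragraph. The whole content is already packaged there; what remains is to instantiate the comparison correctly.

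First I would invoke the inequality $\tau_p\leq\tau_{\partial}$, which is exactly the fact (stated just above) that $\tau_p$, when it is admissible, is the smallest admissible group topology, while the permutation topology $\tau_{\partial}$ is itself admissible and dominates $\tau_p$. Taking $\sigma=\tau_p$ and $\tau=\tau_{\partial}$ in Lemma \ref{top_uniform}(2), total boundedness of $(L\wedge R)_{\tau_{\partial}}$ forces total boundedness of $(L\wedge R)_{\tau_p}$. Since a group equipped with a topology $\tau$ is Roelcke precompact precisely when $(L\wedge R)_{\tau}$ is totally bounded, this yields the first claim: Roelcke precompactness of $(G,\tau_{\partial})$ implies that of $(G,\tau_p)$.

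For the second assertion I would argue by contraposition. Let $\tau$ be an arbitrary admissible group topology on $G$. Because $\tau_p$ is the smallest admissible group topology, we have $\tau_p\leq\tau$. Applying Lemma \ref{top_uniform}(2) again, now with $\sigma=\tau_p$ and this general $\tau$, total boundedness of $(L\wedge R)_{\tau}$ would entail total boundedness of $(L\wedge R)_{\tau_p}$, that is, Roelcke precompactness of $(G,\tau_p)$. Contraposing, if $(G,\tau_p)$ is not Roelcke precompact, then $(G,\tau)$ is not Roelcke precompact for any admissible $\tau$.

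There is essentially no analytic obstacle here; the difficulty is entirely absorbed into Lemma \ref{top_uniform} together with the minimality of $\tau_p$. The only point requiring care is the standing hypothesis that $\tau_p$ is an admissible group topology on $G$, so that the minimality statement is available and the comparisons $\tau_p\leq\tau_{\partial}$ and $\tau_p\leq\tau$ are legitimate; this is precisely the framework in which the corollary is posed, so no further verification is needed.
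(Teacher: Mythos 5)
Your proof is correct and is essentially the paper's own argument: the paper states the corollary as an immediate consequence of Lemma \ref{top_uniform}(2), applied with $\tau_p\leq\tau_{\partial}$ for the first claim and with $\tau_p\leq\tau$ (minimality of the admissible topology $\tau_p$) plus contraposition for the second. Your explicit note that admissibility of $\tau_p$ is the standing hypothesis making these comparisons legitimate is exactly the right point of care.
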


\begin{rem}
Generally speaking, the Roelcke precompactness of $(G, \tau_{p})$ does not imply the Roelcke precompactness of $(G, \tau_{\partial})$.
\end{rem}

Indeed, the action of the (infinite) topological group $G$ on itself by multiplication on the left is uniformly equicontinuous with respect to the left uniformity $L$ and the topology of pointwise convergence $\tau_{p}$ is an admissible group topology on $G$ coinciding with the topology of $G$ (see, for example, \cite[Example 3.6]{Kozlov}). If $G$ is compact, then $(G, \tau_{p})$ is Roelcke precompact. However, in the permutation topology, $G$ is discrete (and infinite). Hence, it is not Roelcke precompact.

\subsection{Ultratransitive action} 
\begin{df} The action of the group $G$ on the set $X$ is strongly $n$-transitive, $n\geq 1$, if for any families of distinct $n$ points $x_1, \dots, x_n$ and $y_1, \dots, y_n$ there exists $g\in G$ such that $g(x_k)=y_k$, $k=1,\ldots, n$. 

The action $G\curvearrowright X$, which is strongly $n$-transitive for all $n\in\mathbb N$, is called ultratransitive.
\end{df}

{\bf Facts 2.} (1) A group $G$ which acts ultratransitively on $X$ is a dense subgroup of $({\rm S}(X), \tau_{\partial})$. 

Indeed, let the set $O$ be open in $({\rm S}(X), \tau_{\partial})$ and $g\in O$. Then there are $x_1,\ldots, x_n\in X$ such that $g{\rm St}_{x_1, \dots, x_n}\subset O$ and $g{\rm St}_{x_1, \dots, x_n}=\{h\in G\ |\ h(x_i)=g(x_i), i=1,\ldots, n\}$ an open neighbourhood of $g$. Since $G$ acts ultratransitively on $X$
there is $h\in G$ such that $h(x_i)=g(x_i)$, $i=1,\ldots, n$. Evidently, $h\in O$.\hfill $\Box$

(2) The subgroup ${\rm S}_{<\omega}(X)$ of the group ${\rm S}(X)$ whose elements has finite supports acts ultratransitively on $X$. Hence, ${\rm S}_{<\omega} (X)$ is a dense subgroup of $({\rm S}(X), \tau_{\partial})$. 

(3) The Roelcke precompactness of the group $({\rm S}(X), \tau_{\partial})$ is proved in \cite{Gau}  (see also \cite[Example 9.14]{RD}), 
the Roelcke precompactness of the group $({\rm S}_{<\omega}(X), \tau_{\partial})$ is proved in \cite{Ban}. 

From Fact 1 (1) it follows that any group $G$ which acts ultratransitively on $X$ is Roelcke precompact. Moreover, their Roelcke compactifications are isomorphic to the Roelcke compactification of  $({\rm S}(X), \tau_{\partial})$.

(4) The space $X$ is ultrahomogeneous if the action of its homeomorphism group ${\rm Hom}(X)$ is ultratransitive. Ultrahomogeneous spaces are locally compact metrizable CDH spaces whose complement to any finite subset is connected. These include the spheres $S^{n-1}$ in Euclidean spaces $\mathbb R^n$, $n\geq 3$, the Hilbert cube $Q$ (see, for example, \cite{ArM}). 

The group of homeomorphisms of an ultrahomogeneous space with the permutation topology is Roelcke precompact, although the permutation topology is not necessarily admissible. The homeomorphism groups of the spheres $S^{n}$, $n\geq 2$, and the Hilbert cube $Q$ with the compact open topology (the smallest admissible group topology) are not Roelcke precompact \cite{Ros}. Hence, by Lemma \ref{top_uniform}, and with any admissible group topology, these groups are not Roelcke precompact.

\subsection{Homogeneous chains} 
If $X$ and $Y$ are chains, then their product $X\times Y$ with lexicographic order is denoted by $X\otimes_{\ell} Y$; their concatenation is denoted by $X\lozenge Y$ (on the disjoint union of $X$ and $Y$, the linear order is as follows: $x<y$ if $x\in X$, $y\in Y$, linear order restricted on $X$ and $Y$ coincide with linear orders on $X$ and $Y$, respectively).

The subset $Y$ of the chain $X$ is called an interval if for any $x\leq y\in Y$ and $x\leq z\leq y\Longrightarrow z\in Y$. The intervals are: 

\noindent half-intervals $[a, b)=\{x\in X\ |\ a\leq x<b\}$, $(a, b]=\{x\in X\ |\ a< x\leq b\}$; 

\noindent open intervals $(a, b)=\{x\in X\ |\ a< x< b\}$, $(a, \to)=\{x\in X\ |\ a< x\}$, $(\gets, b)=\{x\in X\ |\ x< b\}$, the set $X$; 

\noindent segments $[a, b]=\{x\in X\ |\ a\leq x\leq b\}$ (in particular, points of $X$). 

\begin{df} Let $X$ be a chain, $\aut(X)$ be a group of order-preserving bijections {\rm(}automorphisms{\rm)} of $X$.
 
$X$ is called a homogeneous chain if the action $\aut(X)\curvearrowright X$ is transitive {\rm(}i.e., for any $x,y\in X$ there exists $f\in\aut(X)$ such that $f(x)=y${\rm)}.
\end{df}

{\bf Facts 3.} (1) A homogeneous chain is either single-point or infinite.
 
(2) A homogeneous chain $X$ is either discrete (for any $ x\in X$ there exists $x<x^+$ and $(x, x^+)=\emptyset$ \cite{KM}), or dense ($X$ is dense if for any $x<y\in X$ there exists $z\in (x, y)$ \cite{KM}).

\begin{df} The interval $J$ of a homogeneous chain $X$ is called regular {\rm\cite[Definition 5]{Ohkuma2}} if 
$$\forall x, y\in J, \forall g\in\aut(X) ((gx\in J)\Longrightarrow (gy\in J)).$$
 
A homogeneous chain $X$ is called simple {\rm\cite[Definition 6]{Ohkuma2}} if $X$ has no proper regular intervals {\rm(}being non-empty intervals, non-proper are either single-point or all $X${\rm)}. The group $\aut(X)$ in this case is called o-primitive {\rm\cite{Glass}}.
 
A chain $X$ is called 2-homogeneous if for any pairs of points  $x<y$ and $x'<y'$ there exists $g\in\aut(X)$ such that $g(x)=x'$, $g(y)=y'$. The group $\aut(X)$ in this case is called o-2-transitive {\rm\cite{Glass}}. 
 
A homogeneous chain $X$ is called rigid {\rm\cite{GHHS}} if for any $x,y\in X$ there exists an only $g\in\aut(X)$ such that $g(x)=y$. The group $\aut(X)$ in this case is called regular or uniquely transitive {\rm\cite{Glass}}.
\end{df}

{\bf Facts 4.} (1) A 2-homogeneous chain is dense. 

(2) A 2-homogeneous chain is ultrahomogeneous {\rm\cite[Lemma 1.10.1]{Glass} (see also \cite{Ovch})}, i.e. for any families of different $n$ points $x_1<\ldots< x_n$ and   $y_1<\ldots< y_n$ there exists $g\in\aut(X)$ such that $g(x_k)=y_k$, $k=1,\ldots, n$, $n\in\mathbb N$. 

(3) A proper regular interval $J$ of a homogeneous chain $X$ is a homogeneous interval, a chain and $X=Y\otimes_{\ell} J$, where $Y$ is a homogeneous chain \cite[Theorem 7]{Ohkuma2}.

In \cite[point  2, point 3.5]{Ohkuma2}, \cite{Glass} and \cite{Hol} it is established that the following theorem holds: 
 
\begin{thm}\label{Classific_gr} For a homogeneous chain $X$, the following conditions are equivalent: 

{\rm(1)} the set $X$ is simple;

{\rm(2)}  the set $X$ 

\qquad{\rm(i)} 2-homogeneous {\rm (}ultrahomogeneous{\rm )}, or

\qquad{\rm(ii)} is rigid and is a subgroup of $\mathbb R$ {\rm(}isomorphic to $\aut(X)${\rm)};

{\rm(3)} the group $\aut(X)$ is o-primitive;

{\rm(4)} the group $\aut(X)$

\qquad{\rm(i)} is o-2 transitive, or

\qquad{\rm(ii)} is uniquely transitive and is a subgroup of the abelian group $\mathbb R$ {\rm(}their full description is given in {\rm\cite{GHHS})}. \hfill $\square$
\end{thm}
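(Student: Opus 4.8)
The plan is to notice first that the equivalences $(1)\Leftrightarrow(3)$ and $(2)\Leftrightarrow(4)$ are purely terminological: by the definitions preceding the statement, ``$X$ simple'' is by definition ``$\aut(X)$ o-primitive'', ``$X$ 2-homogeneous'' is ``$\aut(X)$ o-2-transitive'', ``$X$ rigid'' is ``$\aut(X)$ uniquely transitive'', and ``$X$ isomorphic to a subgroup of $\mathbb R$'' is precisely the statement that the (regular) representation $\aut(X)\cong X$ sits inside $\mathbb R$. Thus the whole mathematical content reduces to the dichotomy $(1)\Leftrightarrow(2)$ for the chain $X$ itself.

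For $(2)\Rightarrow(1)$ I would treat the two alternatives separately. If $X$ is 2-homogeneous, then by Facts 4 it is dense and ultrahomogeneous; supposing toward a contradiction that $J$ is a proper regular interval, it has two points $x<y$ and admits a point $w\notin J$ lying, say, above $J$, so $x<y<w$. Then 2-homogeneity yields $g\in\aut(X)$ with $g(x)=x$ and $g(y)=w$, whence $gx\in J$ but $gy\notin J$, contradicting regularity (the case of $w$ below $J$ is symmetric). If instead $X$ is rigid and $\aut(X)$ acts by the translations of a subgroup $G\le\mathbb R$, I would use that, after identifying $X$ with $\aut(X)$ via a base point, a regular interval through the base point is exactly a convex subgroup: setting $y=e$ in the regularity condition gives $JJ^{-1}\subseteq J$, forcing $J$ to be a subgroup, and it is convex as an interval. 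A proper regular interval would then be a nontrivial proper convex subgroup of $G$, which the Archimedean property of $\mathbb R$ forbids. Either way $X$ is simple.

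The substantial direction is $(1)\Rightarrow(2)$. Assume $X$ simple; by Facts 3 it is discrete or dense. In the discrete case I would consider the relation $x\sim y$ meaning the interval between $x$ and $y$ is finite: its classes are convex and $\aut(X)$-invariant as a partition, hence each class is a regular interval, so by simplicity a single class is either a point or all of $X$. Since $x\sim x^{+}$ it is not a point, so $X$ has order type $\mathbb Z$, every order-automorphism preserves the successor and is thus a shift, and $X$ is rigid with $\aut(X)\cong\mathbb Z\le\mathbb R$. In the dense case, if $\aut(X)$ acts freely it is uniquely transitive; identifying $X$ with $\aut(X)$ turns the order into an ordered-group structure in which proper regular intervals through the base point are exactly proper convex subgroups, so simplicity makes $\aut(X)$ Archimedean, and H\"{o}lder's theorem embeds it into $\mathbb R$, giving alternative $(2)(\mathrm{ii})$.

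The main obstacle is the remaining dense case, in which some nonidentity automorphism fixes a point (the action is not free) and one must deduce o-2-transitivity from o-primitivity alone. This is the genuinely non-formal step and is the heart of the Ohkuma--Glass--Holland theory: one exploits the fixed points of a point stabilizer together with the absence of proper interval congruences to manufacture, from o-primitivity, automorphisms carrying an arbitrary ordered pair to any other, thereby upgrading transitivity to 2-transitivity. I would invoke this classical result rather than reprove it, and then combine it with Facts 4(2) to pass from 2-homogeneity to full ultrahomogeneity, completing $(1)\Rightarrow(2)$.
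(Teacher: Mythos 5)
First, note that the paper offers no proof of Theorem \ref{Classific_gr} at all: the statement is quoted from \cite{Ohkuma2}, \cite{Glass}, \cite{Hol} and closed with a box, so the only possible comparison is with the classical literature that you also invoke. Within your attempt, the terminological equivalences (1)$\Leftrightarrow$(3) and (2)$\Leftrightarrow$(4), the proof of (2)$\Rightarrow$(1) in both alternatives, and the discrete case of (1)$\Rightarrow$(2) are correct (in the rigid alternative of (2)$\Rightarrow$(1) add one line: a proper regular interval need not contain the base point, but its image under a suitable translation does, and images of regular intervals under automorphisms are again regular). Delegating the non-free dense case to the Ohkuma--Glass--Holland theory is also legitimate, since the paper does exactly this for the entire theorem.

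The genuine gap is the dense \emph{free} case of (1)$\Rightarrow$(2), which you present as routine but which is not. The orbit map transports the order of $X$ to a \emph{left}-invariant order on $G=\aut(X)$ only; nothing gives right-invariance. Your identification of regular intervals through $e$ with convex subgroups is fine (it uses only the group operations), but the step ``no proper convex subgroups $\Rightarrow$ Archimedean'' breaks down for left orders: the standard argument takes $e<a$, $b$ with $a^n<b$ for all $n$ and shows that $T=\bigcup_n(\gets,a^n)$, or the convex hull of $\langle a\rangle$, is a proper nontrivial convex subgroup; but closure of $T$ under multiplication requires passing from $g<a^{n}$ to $ga^{m}<a^{n+m}$, i.e.\ multiplying an inequality on the \emph{right}, which a merely left-invariant order does not permit. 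H\"older's theorem in the form ``Archimedean ordered group embeds in $(\mathbb R,+)$'' likewise presupposes bi-invariance (Conrad's left-order version exists, but it takes the Archimedean property as hypothesis --- precisely what is missing). Note moreover that under your hypotheses bi-invariance of the transported order is \emph{equivalent} to commutativity of $\aut(X)$: if right translations preserve the order they are order-automorphisms of the chain, hence by unique transitivity left translations, hence central. So the claim you gloss over is exactly the assertion that a simple rigid chain has abelian automorphism group, and that is the substantive content of \cite{Ohkuma2} and \cite{GHHS} in this case; it cannot be obtained by soft convexity arguments. Freeness alone certainly does not suffice: any left-orderable group (e.g.\ a free group) acting on itself by left translations acts freely by automorphisms of a chain, yet is far from abelian, so simplicity together with the fact that $\aut(X)$ is the \emph{full} automorphism group must enter in an essentially nonformal way --- just as in the non-free case that you do delegate to the literature. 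The repair is simply to cite the classical theory for the rigid dense case as well, after which your write-up becomes what the paper itself does --- a citation --- supplemented by correct proofs of the peripheral implications.
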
 

\subsection{Topologization of the automorphism group of a homogeneous chain}
A {\it generalized ordered space}, or GO-space is a chain with a topology stronger than the linear order topology, whose  base is formed by the intervals \cite{Lut}.
  
\begin{lem} In the following topologies, a homogeneous infinite chain $X$ is a GO-space and each element $\aut(X)$ is a homeomorphism: 

{\rm(1)} topology of linear order $\tau$ {\rm(}the topology base is intervals $(x, y)$, $x<y\in X$, in this case $X$ is a linearly ordered space {\rm (LOTS))};

{\rm(2)} "arrow" topology $\tau_{\to}$ or $\tau_{\gets}$ {\rm(}topology base is half-intervals $[x, y)$ {\rm(}right arrow{\rm)} or half-intervals $(x, y]$, {\rm(}left arrow{\rm)},   $x<y\in X${\rm)};

{\rm(3)} discrete topology $\tau_d$.

$$\tau\leq\left\{\begin{array}{c}\tau_{\to} \\ \tau_{\gets}\end{array}\right\}\leq\tau_d.$$
$\tau=\left\{\begin{array}{c}\tau_{\to} \\ \tau_{\gets}\end{array}\right\}=\tau_d\Longleftrightarrow$ $X$ is a discrete homogeneous chain.  
\end{lem}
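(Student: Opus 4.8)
The plan is to verify the three assertions in turn---that each of the listed topologies makes $X$ a GO-space, that every $f\in\aut(X)$ is a homeomorphism, and the displayed inclusions together with the coincidence criterion---reserving the homogeneity argument for the equivalence, which is where the real content lies.

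First I would check the GO-space claim. By the definition of a GO-space it suffices, for each of $\tau,\tau_{\to},\tau_{\gets},\tau_d$, to exhibit a base consisting of intervals and to check that the topology refines the order topology $\tau$. For $\tau$ this is immediate, the open intervals and rays being a base of intervals, so $(X,\tau)$ is a LOTS. For $\tau_{\to}$ the half-intervals $[x,y)$ are intervals, they form a base (their pairwise intersections are again of this form, and they cover $X$ since a homogeneous infinite chain has no largest element by Fact~1(1) and the absence of endpoints), and the inclusion $\tau\leq\tau_{\to}$ follows from $(a,b)=\bigcup\{[x,b)\ |\ a<x<b\}$ together with the analogous expression of the rays; the symmetric computation gives $\tau\leq\tau_{\gets}$. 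The discrete topology $\tau_d$ has the singletons $\{x\}=[x,x]$ as a base of (degenerate) intervals and refines every topology, so $\tau_{\to},\tau_{\gets}\leq\tau_d$. This establishes the displayed chain of inclusions.

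Next I would show each $f\in\aut(X)$ is a homeomorphism. Since $f$ is an order isomorphism of $X$ onto itself, it carries each basic interval to a basic interval of the same type: $f((a,b))=(f(a),f(b))$ and $f([x,y))=[f(x),f(y))$, and likewise for $\tau_{\gets}$ and for the rays, with the identical statements holding for $f^{-1}$. Hence both $f$ and $f^{-1}$ map a base onto a base and are therefore continuous in each of the topologies (trivially so for $\tau_d$), so $f$ is a homeomorphism.

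The hard part will be the equivalence $\tau=\tau_{\to}=\tau_{\gets}=\tau_d\Longleftrightarrow X$ is discrete, and here the key is a homogeneity argument turning successors into predecessors. For the forward implication, if $X$ is discrete then every $x$ has an immediate successor $x^+$; because $x$ is then the immediate predecessor of $x^+$ and, by homogeneity, every point is the image of $x^+$ under some automorphism, every point also has an immediate predecessor $x^-$. Consequently $\{x\}=(x^-,x^+)$ is $\tau$-open, so $\tau=\tau_d$, and the inclusions $\tau\leq\tau_{\to},\tau_{\gets}\leq\tau_d$ force all four topologies to coincide. For the converse, if the topologies coincide then in particular $\tau=\tau_d$, so every singleton $\{x\}$ is $\tau$-open; since a dense chain has no isolated points, $X$ cannot be dense, and by the dichotomy of Fact~3(2) (a homogeneous chain is discrete or dense) it is therefore discrete. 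The only genuine subtlety is this transfer of the immediate-neighbour property across the whole chain via homogeneity; the remaining verifications are routine once the interval bases are in place.
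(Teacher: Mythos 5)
Your proposal is correct, but it takes a genuinely different route from the paper. You verify exactly what the lemma asserts: for each of $\tau,\tau_{\to},\tau_{\gets},\tau_d$ you exhibit a base of intervals refining the order topology, you check that order isomorphisms carry basic sets to basic sets of the same type, and you prove the equivalence with discreteness directly. The paper instead proves a stronger, classification-style statement: it takes an \emph{arbitrary} topology $\sigma$ in which $X$ is a GO-space, analyzes what the neighbourhood filter at a point can be (only open intervals; open intervals plus half-intervals of one kind; plus half-intervals of both kinds), and concludes via homogeneity that $\sigma$ must be one of the four listed topologies; the assertions of the lemma are then read off. This classification is not idle generality --- it is precisely what the Corollary following the lemma uses (uniqueness of the homogeneous GO-topology on a discrete chain, the implications among $\tau_{\to},\tau_{\gets},\tau_d$), so your argument, while complete for the lemma as stated, would not by itself support that Corollary. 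On the other hand, your proof makes explicit two points the paper glosses over: that a homogeneous infinite chain has no endpoints (needed so the half-intervals $[x,y)$, $x<y\in X$, cover $X$), and that discreteness in the sense of Facts~3(2) (immediate successors) yields immediate predecessors by transporting them with automorphisms --- the paper simply asserts the existence of $x^-<x<x^+$. One small slip: the no-endpoints claim is not ``Fact 1(1)'' (that concerns Roelcke precompactness of dense subgroups); it follows directly from transitivity of $\aut(X)\curvearrowright X$, since a largest or smallest element would be fixed by every automorphism.
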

 
\begin{proof} Let $\sigma$ be an arbitrary topology on $X$ in which $X$ is a GO-space. If $X$ is discrete, then  $\tau=\left\{\begin{array}{c}\tau_{\to} \\ \tau_{\gets}\end{array}\right\}=\tau_d$, since for any $x\in X$ there are $x^-<x< x^+$ and the intervals $(x^-, x)$, $(x, x^+)$ are empty sets.
 
If $X$ is not discrete, then let $x$ be an arbitrary point of $X$. Intervals containing $x$ can be either open intervals $(a, b)$, $a<x<b$, or half-intervals $[x, b)$, $x<b$,  $(a, x]$, $a<x$. The cases of segments $[a, b]$, $[x, b]$, $[a, x]$ are reduced to the previous cases, since the topology on a GO-space is stronger than the topology of linear order. 
 
If only open intervals are considered as open neighbourhoods of the point $x$, then they form the basis of a linear order topology at the point $x$, and, due to the homogeneity of $X$, a linear order topology will be set on $X$.  
 
If at least one half-interval $[x, b)$ (respectively $(a, x]$) is added to the open intervals as open neighbourhood of the point $x$, then they form the topology base at the point $x$ of the form $\{[x, b)\ |\ b>x\}$ (respectively   $\{(a, x]\ |\ a<x\}$), and, due to the homogeneity of $X$, the "arrow" topology $\tau_{\to}$ (respectively $\tau_{\gets}$) will be set on $X$.
 
If at least one half-interval $[x, b)$ and at least one half-interval $(a, x]$ are added to the open intervals as open neighbourhoods of the point $x$, then they form the base of a discrete topology at the point $x$ and, due to the homogeneity of $X$, a discrete topology will be set on $X$. 
 
The relations $\tau\leq\left\{\begin{array}{c}\tau_{\to} \\ \tau_{\gets}\end{array}\right\}\leq\tau_d$ obviously hold.
 
The implication $\tau=\tau_d\Longrightarrow$ $X$ is a discrete homogeneous chain follows from the existence of $x^-<x$ and $x<x^+$ such that $(x^-, x)$, $(x, x^+)$ are empty sets. The inverse implication is established at the beginning of the proof.
 
Since in an order-preserving bijection, the images of intervals and half-intervals are intervals and half-intervals, respectively, then the elements of $\aut(X)$ are homeomorphisms. 
\end{proof}
 
\begin{cor} 
{\rm (1)} On a discrete homogeneous chain, the topology in which $X$ is a homogeneous GO-space is unique. $X$ is discrete {\rm LOTS}.

{\rm (2)} $\tau_{\to}<\tau_d \Longleftrightarrow \tau_{\gets}<\tau_d.$

{\rm (3)} $\left\{\begin{array}{c}\tau_{\to} \\ \tau_{\gets}\end{array}\right\}=\tau_d \Longleftrightarrow \tau_d=\tau.$ 

{\rm (4)} If there is an order-reversing bijection on $X$, then $(X, \tau_{\to})$ and $(X, \tau_{\gets})$ are homeomorphic.\hfill $\square$
\end{cor}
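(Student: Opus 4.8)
The plan is to read all four parts off the preceding Lemma, which shows that on a homogeneous infinite chain $X$ the only topologies making $X$ a homogeneous GO-space are $\tau$, $\tau_{\to}$, $\tau_{\gets}$, $\tau_d$, subject to $\tau\leq\tau_{\to}\leq\tau_d$ and $\tau\leq\tau_{\gets}\leq\tau_d$, together with the dichotomy of Facts~3(2) that $X$ is either (order-)discrete or dense. The single observation driving everything is that a point $x$ is isolated in $\tau_{\to}$ exactly when $[x,b)=\{x\}$ for some $b>x$, i.e. exactly when $x$ has an immediate successor; dually, $x$ is isolated in $\tau_{\gets}$ exactly when it has an immediate predecessor. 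By the transitivity of $\aut(X)$, one such point forces the same for all points.

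For (1), if $X$ is a discrete homogeneous chain then the final equivalence of the Lemma gives $\tau=\tau_{\to}=\tau_{\gets}=\tau_d$, so the four candidate GO-topologies coincide; since these are the only topologies making $X$ a homogeneous GO-space, that topology is unique, and as it equals the linear order topology $\tau$, the space $X$ is a (discrete) LOTS. For the forward implication of (3) suppose $\tau_{\to}=\tau_d$ (the case $\tau_{\gets}=\tau_d$ is symmetric): then every singleton is $\tau_{\to}$-open, so every point has an immediate successor, hence $X$ is not dense and is therefore discrete by Facts~3(2), and the Lemma yields $\tau=\tau_d$. The reverse implication of (3) is immediate from the squeeze $\tau\leq\tau_{\to}\leq\tau_d$ and $\tau\leq\tau_{\gets}\leq\tau_d$: if $\tau=\tau_d$ then both arrow topologies are forced to equal $\tau_d$.

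Part (2) follows by splitting on the same dichotomy. If $X$ is discrete then, by the Lemma, $\tau_{\to}=\tau_{\gets}=\tau_d$, so neither strict inequality holds and the equivalence is vacuously true. If $X$ is dense then no point has an immediate successor or predecessor, so no singleton is open in either arrow topology; combined with $\tau_{\to}\leq\tau_d$ and $\tau_{\gets}\leq\tau_d$ this gives $\tau_{\to}<\tau_d$ and $\tau_{\gets}<\tau_d$ simultaneously, and the equivalence holds again. I do not expect a genuine obstacle anywhere in this corollary; the only point requiring care is the bookkeeping of strict versus non-strict inclusions in (2) and (3), which is precisely why I route both through the discrete/dense dichotomy of Facts~3(2) rather than arguing with the inclusions directly.

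For (4), let $\phi\colon X\to X$ be an order-reversing bijection. A direct computation gives $\phi([x,b))=(\phi(b),\phi(x)]$ for $x<b$, so $\phi$ carries the basic right half-intervals of $\tau_{\to}$ bijectively onto the basic left half-intervals of $\tau_{\gets}$; applying the same formula to the order-reversing bijection $\phi^{-1}$ shows the preimage of each $\tau_{\gets}$-basic set is a $\tau_{\to}$-basic set. Hence $\phi$ and $\phi^{-1}$ are continuous and $\phi$ is a homeomorphism $(X,\tau_{\to})\to(X,\tau_{\gets})$.
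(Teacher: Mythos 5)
Your proof is correct and takes essentially the approach the paper intends: the paper states this corollary without any written proof, treating all four parts as immediate consequences of the preceding Lemma (the chain of inequalities $\tau\leq\tau_{\to},\tau_{\gets}\leq\tau_d$ and the final equivalence with discreteness) together with the discrete/dense dichotomy of Facts~3(2), which is exactly how you argue. Your routing of (2) and (3) through the observation that an isolated point in an arrow topology forces an immediate successor (hence discreteness), and the direct computation $\phi([x,b))=(\phi(b),\phi(x)]$ for (4), are precisely the routine verifications the author leaves to the reader.
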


\begin{pro}\label{perm_least_adm_top}  Let $X$ be a homogeneous chain. 

{\rm (1)}
On the group $\aut(X)$, the topology of pointwise convergence $\tau_{p}$ is the smallest admissible group topology for the action $\aut(X)\curvearrowright (X, \tau)$. The topology $\tau_{\partial}$ is an admissible group topology and $\tau_{\partial}\geq \tau_p$. If $X$ is discrete, then $\tau_{\partial}=\tau_p$.

{\rm (2)} On the group $\aut(X)$, the permutation topology $\tau_{\partial}$ is the smallest admissible group topology for actions $\aut(X)\curvearrowright (X, \tau_{\to})$, $\aut(X)\curvearrowright (X, \tau_{\gets})$, $\aut(X)\curvearrowright (X, \tau_{d})$.
\end{pro}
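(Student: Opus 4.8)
\emph{Proof plan.} The decisive external input is \cite[Lemma 3.1]{Kozlov}, quoted above: once the topology of pointwise convergence $\tau_p$ is known to be an \emph{admissible} group topology for an effective action by homeomorphisms, it is automatically the smallest admissible group topology, one has $\tau_{\partial}\geq\tau_p$, the topology $\tau_{\partial}$ is itself admissible, and $\tau_{\partial}=\tau_p$ when $X$ is discrete. Hence for (1) the entire burden is to check that $\tau_p$ is admissible for $\aut(X)\curvearrowright(X,\tau)$; for (2) the burden is to show that $\tau_{\partial}$ is admissible for each of $\tau_{\to},\tau_{\gets},\tau_d$ and that it lies below every admissible topology.

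For (1), the Lemma preceding this Proposition already gives that every $f\in\aut(X)$ is a homeomorphism of $(X,\tau)$, and the action is visibly effective, so $\aut(X)$ is a group of homeomorphisms to which \cite[Lemma 3.1]{Kozlov} applies. It then remains to verify that $(\aut(X),\tau_p)$ is a topological group and that the action into $(X,\tau)$ is jointly continuous. Continuity of the action I would split along the dichotomy of Facts 3(2). If $X$ is discrete, then $(X,\tau)$ is discrete and continuity at $(f_0,x_0)$ is trivial, taking $\{x_0\}$ and $[x_0,\{f_0(x_0)\}]$ as neighbourhoods. If $X$ is dense and $f_0(x_0)\in(a,b)$, I would bracket: since $f_0$ is order-preserving, $f_0^{-1}(a)<x_0<f_0^{-1}(b)$, and density supplies $p,q$ with $f_0^{-1}(a)<p<x_0<q<f_0^{-1}(b)$; then $U=[p,(a,b)]\cap[q,(a,b)]$ is a $\tau_p$-neighbourhood of $f_0$, $W=(p,q)$ a neighbourhood of $x_0$, and for $f\in U,\ x\in W$ monotonicity gives $a<f(p)<f(x)<f(q)<b$, i.e.\ $f(x)\in(a,b)$ (one-sided intervals $(\gets,b),(a,\to)$ need a single bracket). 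Continuity of multiplication then follows formally from action-continuity plus continuity of the evaluations $g\mapsto g(x)$: to hit a subbasic $[x,O]$ at $f_0g_0$, set $y_0=g_0(x)$, use action-continuity at $(f_0,y_0)$ to get $U\ni f_0$ and $W\ni y_0$ with $f(W)\subset O$, and combine with $[x,W]\ni g_0$. Continuity of inversion I would obtain from reversal of inequalities under $f^{-1}$: to hit $[x,(c,d)]$ at $f_0^{-1}$ (so $c<f_0^{-1}(x)<d$), take $U=[c,(\gets,x)]\cap[d,(x,\to)]$, since $f(c)<x<f(d)$ is equivalent to $c<f^{-1}(x)<d$. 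With $\tau_p$ admissible, part (1) closes by \cite[Lemma 3.1]{Kozlov}.

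For (2), note first that $\tau_{\partial}$ is a group topology on $\aut(X)$, being the subspace topology inherited from the permutation topology on the full symmetric group, and the stabilizers $\St_x$ are its subbasic open subgroups. Admissibility (continuity of the action into $\tau_{\to}$, and symmetrically into $\tau_{\gets}$, $\tau_d$) is again a bracketing argument, now using stabilizers: to hit a basic arrow-neighbourhood $[f_0(x_0),b)$ at $(f_0,x_0)$, set $q=f_0^{-1}(b)>x_0$ and take $V=f_0\St_{x_0,q}$, $W=[x_0,q)$; for $f\in V$ and $x\in W$ monotonicity yields $f_0(x_0)=f(x_0)\leq f(x)<f(q)=b$. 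The discrete case $\tau_d$ uses the singleton neighbourhood as before.

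The heart of (2) is minimality: for an arbitrary admissible group topology $\sigma$ I must show $\tau_{\partial}\leq\sigma$, i.e.\ that each subgroup $\St_x$ is a $\sigma$-neighbourhood of $e$. The obstacle is that, unlike in the discrete case, $\{x\}$ is \emph{not} open in $\tau_{\to}$ for dense $X$, so continuity of the action cannot pin down $g(x)=x$ in one stroke. I would instead exploit one-sidedness together with the inversion homeomorphism. The set $[x,\to)$ is open in $\tau_{\to}$, so continuity of the orbit map $g\mapsto g(x)$ makes $A_x=\{g:g(x)\geq x\}$ a $\sigma$-neighbourhood of $e$. Because the elements of $\aut(X)$ are order-preserving, $g(x)\geq x\Leftrightarrow g^{-1}(x)\leq x$, whence $A_x^{-1}=\{g:g(x)\leq x\}$; since inversion is a $\sigma$-homeomorphism fixing $e$, the set $A_x^{-1}$ is also a $\sigma$-neighbourhood of $e$. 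Therefore $\St_x=A_x\cap A_x^{-1}$ is a $\sigma$-neighbourhood of $e$, hence a $\sigma$-open subgroup, giving $\tau_{\partial}\leq\sigma$; for $\tau_d$ the same conclusion is immediate since $\{x\}$ is open. I expect this inversion trick for the arrow topologies to be the main point, the remainder being routine bracketing via monotonicity.
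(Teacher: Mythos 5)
Your proof is correct, and on the substantive point --- minimality of $\tau_{\partial}$ in part (2) --- it is essentially the paper's argument: both proofs exploit the fact that order preservation converts the one-sided condition $g^{-1}(x)\geq x$ into $g(x)\leq x$, and then symmetrize under inversion to force $g(x)=x$. The only minor difference is how the one-sided neighbourhood is produced: the paper uses joint continuity of the action to obtain $O\in N_G(e)$ and a half-interval $[x,x')$ with $O[x,x')\subset[x,y)$, and then intersects, getting $O\cap O^{-1}\subset\St_x$; you instead take the preimage $A_x$ of the ray $[x,\to)$ under the orbit map $g\mapsto g(x)$ and intersect $A_x\cap A_x^{-1}=\St_x$. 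Your variant needs only continuity of the orbit maps rather than joint continuity of the action, a marginal economy, but the trick is the same. The genuine divergence is in part (1): the paper disposes of it entirely by citing \cite{Ovch} and \cite{Sorin}, whereas you reconstruct it --- verifying that $\tau_p$ is a group topology and that the action on $(X,\tau)$ is jointly continuous by bracketing along the density/discreteness dichotomy of Facts 3, and then invoking \cite[Lemma 3.1]{Kozlov} (already quoted in the paper's preliminaries) for minimality, admissibility of $\tau_{\partial}$, and the equality $\tau_{\partial}=\tau_p$ in the discrete case. This buys self-containedness at the cost of length; your bracketing verifications (including the inversion-continuity check via $f(c)<x<f(d)\Leftrightarrow c<f^{-1}(x)<d$) are routine but, as written, complete.
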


\begin{proof}  $(\aut(X), \tau_{\partial})$ is a topological group \cite{RD}. 

The statement (1) of the proposition is proved in \cite{Ovch} and \cite{Sorin}. 

(2) It is easy to verify that the permutation topology $\tau_{\partial}$ is an admissible group topology for the actions $\aut(X)\curvearrowright (X, \tau_{\to})$, $\aut(X)\curvearrowright (X, \tau_{\gets})$, $\aut(X)\curvearrowright (X, \tau_{d})$. In the latter case, it is obviously the smallest. 

Let $\sigma$ be an admissible group topology, for example, on the group $\aut(X, \tau_{\to})$. For any point $x$ and its neighbourhood $[x, y)$, $x<y$, there is a neighbourhood $O$ of the unit of the group, and a neighbourhood of the point $x$ of the form $[x, x')$, $x<x'$, such that $O[x, x')\subset [x, y)$. Then for any homeomorphism $g$ from the neighbourhood $O\cap O^{-1}$ of the unit of the group we have $g(x)\in [x, y)$ and $g^{-1}(x)\in [x, y)$. If, for example, $g(x)=z>x$, then $g^{-1}(x)<g^{-1}(z)=x$. Hence, $g(x)=x$ for any $g\in O\cap O^{-1}$ and $O\cap O^{-1}\subset\st_x$. Consequently, $\sigma\geq\tau_{\partial}$ and $\tau_{\partial}$ is the smallest admissible group topology on $\aut(X)$ for the action $\aut(X)\curvearrowright (X, \tau_{\to})$.
\end{proof}

\begin{rem} $(\aut(X), \tau_{\partial})$ is a subgroup $({\rm S}(X), \tau_{\partial})$.
\end{rem}

\subsection{$G$-spaces}
All the necessary information can be found in \cite{Megr0}.

Under the continuous action $G\curvearrowright X$ of the topological group $G$ on the space $X$, the triple $(G,X,\curvearrowright)$ is called a $G$-space (abbreviated $X$ is a $G$-space).

The uniformity $\mathcal U$ on $X$ is called equiuniformity if the action of $G\curvearrowright X$ is saturated (i.e. any homeomorphism from $G$ is uniformly continuous) and is bounded (i.e. for any $u\in\mathcal U$ there are $O\in N_G(e)$ and $v\in\mathcal U$ such that the covering $\{OV\ |\ V\in v\}$ refines $u$). In this case, $X$ a $G$-Tikhonoff space, the completion of $X$ with respect to a totally bounded equiuniformity, is $G$-compactification or equivariant compactification of $X$, and there exists $\beta_G X$ --- the maximal $G$-compactification of $X$ which corresponds to the maximal totally bounded  equiuniformity. 

\subsection{Maximal $G$-compactifications of spaces of ultrahomogeneous chains}\label{M-G-comp}
Let $X$ be an ultrahomogeneous chain, $\aut(X)$ its automorphism group. 

The base of the maximal equiuniformity $\mathcal U_X^{p}$ for the action $(\aut(X), \tau_{p})\curvearrowright (X, \tau)$  is formed by finite coverings 
$$\{Ox\ |\ x\in X\}=(\gets, y_2)\cup (y_1, y_2)\cup (y_1, y_4)\cup\ldots\cup(y_{2n-3}, y_{2n})\cup (y_{2n-1}, y_{2n})\cup (y_{2n-1}, \to),$$
for  $y_1<x_1<y_2<\ldots<y_{2n-1}<x_n<y_{2n}$, $n\in\mathbb N$, $O=[x_1, (y_1, y_2)]\cap\ldots\cap [x_n, (y_{2n-1}, y_{2n})]$
(the corresponding diagonal entourage ${\rm U}_{O}$), 
$\beta_{p}X$ is the maximal $G$-compactification of $X$ (completion of $X$  with respect to uniformity $\mathcal U_X^{p}$ \cite[Theorem 3]{ChK2}).

According to the description in \cite{Fed}, the smallest linearly ordered compactification $m (X, \tau)$ \cite{Kauf}  
is generated by replacing each gap in $X$ by a point with a natural continuation of the order. At the same time, $m (X, \tau)$ is the only linearly ordered compactification to which the action of $\aut(X)$ is continuously extended (discontinuity if the gap is replaced by two points); the action $(\aut(X), \tau_{p})\curvearrowright m (X, \tau)$ is continuous, $m (X, \tau)$ is connected. 

The base of the maximal equiuniformity $\mathcal U_X^{\partial}$  for the action $(\aut(X), \tau_{\partial})\curvearrowright (X, \tau_d)$  is formed by the finite disjoint coverings 
$$\{{\rm St}_{x_1, \dots, x_n}x\ |\ x\in X\}=(\gets, x_1)\cup\{x_1\}\cup (x_1, x_2)\cup\{x_2\}\cup\ldots\cup(x_{n-1}, x_n)\cup\{x_n\}\cup (x_n, \to),$$ $x_1<\ldots<x_n\in X,$
(the corresponding entourage of the diagonal ${\rm U}_{x_1, \dots, x_n}$), 
$\beta_{\partial}X$ is the maximal $G$-compactification of $X$ (completion of $X$  with respect to the uniformity  $\mathcal U_X^{\partial}$ \cite[Theorem 3]{ChK2}).

A discrete space $(X, \tau_d)$ is a GO-space. There is the smallest LOTS 
$$(X, \tau_d)\otimes_{\ell}\{-1, 0, 1\},$$ 
in which $X$ is a dense subspace \cite{Miwa}, and $(X,\tau_d)\otimes_{\ell}\{-1, 0, 1\}$ is naturally embedded in any other linearly ordered extension of $X$, in which $X$ is dense. In this case, the action $(\aut(X), \tau_{p})\curvearrowright \big((X, \tau_d)\otimes_{\ell}\{-1, 0, 1\}\big)$ is continuous. The smallest linearly ordered compactification of $(X, \tau_d)$ is $m\big((X, \tau_d)\otimes_{\ell}\{-1, 0, 1\}\big)$, which is zero-dimensional.

\begin{rem} A chain without proper gaps is called continuous~{\rm\cite[Ch.\ 6, \S\ 2]{KM}}. There are  unique linearly ordered compactification of $(X, \tau_d)$ or $(X, \tau)$, where $X$ is a continuous chain. There are obtained by addition of two points $\sup$ and $\inf$ of $(X, \tau_d)\otimes_{\ell}\{-1, 0, 1\}$ or $X$ respectively.
\end{rem}

\begin{lem}\label{comp}

{\rm (1)} $\beta_{p}X=m (X, \tau)$,

{\rm (2)} $\beta_{\partial}X=m\big((X, \tau_d)\otimes_{\ell}\{-1, 0, 1\}\big):=m (X, \tau_d)$.
\end{lem}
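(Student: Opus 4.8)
The plan is to reduce both statements to a single principle: in each case the relevant maximal equiuniformity --- whose completion is $\beta_p X$, resp.\ $\beta_\partial X$, by \cite[Theorem 3]{ChK2} --- coincides with the trace on $X$ of the unique uniformity of the compact linearly ordered space on the right-hand side, after which uniqueness of the completion yields the identification. In both cases one inequality is immediate from maximality, and the reverse inequality, asserting that the combinatorially described basic coverings are genuinely traces of finite open coverings of the compactum, is the real content.

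For (1), I would first check that $m(X,\tau)$ is a $G$-compactification of $(X,\tau)$ for $(\aut(X),\tau_p)$. Continuity of the extended action $(\aut(X),\tau_p)\curvearrowright m(X,\tau)$ is already recorded in \S\ref{M-G-comp}; as $m(X,\tau)$ is compact its unique uniformity is then a totally bounded equiuniformity, so its trace $\mathcal V$ on $X$ satisfies $\mathcal V\leq\mathcal U_X^{p}$ by maximality of $\mathcal U_X^{p}$, giving a canonical equivariant surjection $\beta_p X\to m(X,\tau)$. For the reverse inequality $\mathcal U_X^{p}\leq\mathcal V$ I would show that each basic covering of $\mathcal U_X^{p}$ displayed in \S\ref{M-G-comp} is the trace of a finite open covering of $m(X,\tau)$: every endpoint $y_i$ lies in $X\subset m(X,\tau)$, so the short intervals $(y_{2k-1},y_{2k})$, the bridging intervals $(y_{2k-1},y_{2k+2})$ and the two end rays extend to open intervals of $m(X,\tau)$; since consecutive members overlap and $X$ is dense, these extensions cover the whole compactum (each filled gap lies strictly inside one of them), and their trace is precisely the given covering. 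Equivalently, one may compute the completion of $(X,\mathcal U_X^{p})$ directly: a Cauchy filter refines into arbitrarily short intervals with endpoints in $X$ and hence either converges to a point of $X$ or selects a Dedekind gap of $X$, the end rays supplying any missing $\sup$ or $\inf$, so that filling each gap by a single point --- the passage to $m(X,\tau)$ of \cite{Kauf},\cite{Fed} --- reproduces the completion on the nose.

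For (2) the same scheme applies to the discrete phase space $(X,\tau_d)$ and the clopen partition coverings generating $\mathcal U_X^{\partial}$, but discreteness now forces one-sided approaches to separate. In the partition $\{(\gets,x_1),\{x_1\},(x_1,x_2),\dots\}$ the singleton $\{x_i\}$ is isolated from both adjacent intervals, so a Cauchy filter approaching $x_i$ from the left (eventually lying in intervals $(a,x_i)$ with $a\uparrow x_i$) is equivalent neither to the principal filter at $x_i$ nor to one approaching from the right; each $x_i$ therefore splits into three completion points $x_i^{-}<x_i<x_i^{+}$. This tripling is exactly the order embedding $x\mapsto(x,0)$ of $X$ into $(X,\tau_d)\otimes_{\ell}\{-1,0,1\}$ from \cite{Miwa}, after which the remaining gaps are filled as in (1); since the generating coverings are clopen partitions the completion is zero-dimensional, matching $m\big((X,\tau_d)\otimes_{\ell}\{-1,0,1\}\big)$. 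Equivariance of the extended action recorded in \S\ref{M-G-comp} for $\tau_p$ persists for the finer $\tau_\partial\geq\tau_p$, so this is a $G$-compactification for $(\aut(X),\tau_\partial)$, and the coincidence of uniformities finishes the argument.

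I expect the main obstacle to be the reverse inequality $\mathcal U_X^{\ast}\leq\mathcal V$ in each case: verifying that the overlapping intervals with $X$-endpoints of (1) really cover every filled gap of $m(X,\tau)$, and that the completion in (2) reproduces the tripling $\otimes_{\ell}\{-1,0,1\}$ point-for-point, without over- or under-counting the one-sided limits.
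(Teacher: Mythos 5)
Your proposal is correct in substance, but it proves the key identification by different means than the paper. The paper never invokes maximality of $\mathcal U_X^{p}$, $\mathcal U_X^{\partial}$, nor the $G$-compactification property of the right-hand sides: both inclusions between the maximal equiuniformity and the trace of the compactum's uniformity are handled by covering combinatorics. Concretely, it shows that every open covering of $m(X,\tau)$ (resp.\ of $m\big((X,\tau_d)\otimes_{\ell}\{-1,0,1\}\big)$) admits a finite refinement by intervals --- chosen minimal in case (1), disjoint clopen and corrected at the points $(\{x\},0)$ in case (2) --- whose trace on $X$ is refined by (resp.\ coincides with) a basic covering of the maximal equiuniformity, and then asserts the converse extension property, which is exactly the direction you elaborate. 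You instead get the inclusion ``trace $\leq$ maximal'' for free from maximality of the equiuniformity, using the recorded continuity of the extended action, and spend your effort on the extension direction in (1) and on a direct Cauchy-filter computation of the completion in (2), including the correct key observation that the clopen partition coverings force each point of $X$ to split into three completion points --- precisely the $\otimes_{\ell}\{-1,0,1\}$ tripling, in contrast with case (1) where the overlapping interval coverings identify the one-sided filters with the principal one. The maximality shortcut buys brevity and conceptual clarity; the paper's route buys self-containedness: it identifies the two uniformities without needing to know beforehand that the compactum is a $G$-compactification, so that continuity of the action on $\beta_{\partial}X$ comes out as a corollary rather than being an input.

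The one step you should shore up is the equivariance input in (2). What \S\ref{M-G-comp} records is continuity of $(\aut(X),\tau_{p})\curvearrowright (X,\tau_d)\otimes_{\ell}\{-1,0,1\}$, i.e.\ on the tripled LOTS, not on its compactification $m\big((X,\tau_d)\otimes_{\ell}\{-1,0,1\}\big)$. Your remark that continuity persists when $\tau_p$ is replaced by the finer $\tau_{\partial}$ is right, but it does not by itself transfer continuity from the dense invariant LOTS to the compactum, which is what your maximality argument needs (case (1) is fine: there continuity of $(\aut(X),\tau_p)\curvearrowright m(X,\tau)$ is recorded explicitly). This is fixable in two ways: either verify continuity on $m\big((X,\tau_d)\otimes_{\ell}\{-1,0,1\}\big)$ directly, which for $\tau_{\partial}$ is a short computation with stabilizer neighbourhoods of the form $g\,{\rm St}_{x,z}$; or drop the maximality half of (2) altogether and rely on your Cauchy-filter identification of the completion of $(X,\mathcal U_X^{\partial})$, which together with \cite[Theorem 3]{ChK2} already yields $\beta_{\partial}X=m(X,\tau_d)$ with no reference to the compactum being a $G$-compactification.
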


\begin{proof} (1) In any open covering of a connected linearly ordered compactum $m(X,\tau)$, one can refine a finite covering $\Omega$ from the intervals $I_1=(\gets, b_1),\ldots, I_k=(a_k, \to)$, such that $a_2<b_1< a_3<b_2<\ldots<a_k<b_{k-1}\in X$ (it is enough to choose a minimal system from the intervals in any covering in the sense that any of its subsystems is not a covering). Then in $\Omega\wedge X$, the trace of $\Omega$ on $X$,  the covering 
$$(\gets, b_1)\cup (a_2, b_1)\cup (a_2, b_2)\cup\ldots\cup(a_{k-1}, b_{k-1})\cup (a_k, b_{k-1})\cup (a_k, \to),$$
which belongs to $\mathcal U_X^{p}$ is refined, if an arbitrary point is selected in the intervals $(a_j, b_{j-1})$, $j=2,\ldots, k$. Since any covering of $\mathcal U_X^{p}$ continues till the covering of $m(X,\tau)$, then $\beta_{p}X=m (X, \tau)$.

(2) As in the case of (1), in any open covering of a zero-dimensional compactum $m\big((X,\tau_d)\otimes_{\ell}\{-1, 0, 1\}\big)$ it is possible to refine a finite covering of open-closed intervals. By introducing the order on the intervals (at their left ends), we obtain a disjoint covering from the open-closed intervals 
$I_1=(\gets, b_1),\ldots, I_k=(b_k, \to)$, subtracting sequentially from the $j$ interval the union of the previous ones. Let us correct the latter covering by removing endpoints of the form $(\{x\}, 0)$ from the intervals (if any), and adding them as single-point open-closed intervals to the corrected interval system to obtain an open covering $\Omega$ of the space $m\big((X, \tau_d)\otimes_{\ell}\{-1, 0, 1\}\big)$. Then $\Omega\wedge X$, its trace on $X$, obviously coincides with the covering of $\mathcal U_X^{\partial}$. Since any covering of $\mathcal U_X^{\partial}$ continues to cover of  $m\big((X, \tau_d)\otimes_{\ell}\{-1, 0, 1\}\big)$, then  $\beta_{\partial}X=m (X, \tau_d)$.
\end{proof}

\begin{rem} It is possible to prove {\rm Lemma \ref{comp}}, using the uniqueness of the linearly ordered $G$-compactification by showing that the proximity corresponding to the maximal equiuniformities on $ (X, \tau)$ and $\big((X, \tau_d)\otimes_{\ell}\{-1, 0, 1\}, \tau\big)$ are ordered proximities, see, for example, {\rm \cite{MegrS}}.
\end{rem}
 
\section{The Ellis construction and equiuniformities\\ on a transformation group}\label{Ellis_constr}
We assume that the topology of pointwise convergence is an admissible group topology on the group $G$ of homeomorphisms of the space $X$ acting effectively. 
 
The homeomorphisms of the group $G$ as continuous maps can be identified with the points of the space $X^X$ (of degree $X$). Namely, the injective mapping $\imath: G\to X^X$, $\imath(g)=(g(x))_{x\in X}$, is defined. The topology on $X^X$ is initial with respect to projections on factors and its restriction on $\imath(G)$ is the topology of pointwise convergence on $G$.
 
On $X^X$, as on the product of copies of $X$ with the action $\alpha: G\times X\to X$ of the group $G$, the continuous action of the group $G$ is defined: $\alpha_{\Delta}: G\times X^X\to X^X, \alpha_{\Delta} (g, (t_x)_{x\in X})=(gt_x)_{x\in X}$. 
In this case, $\imath$ is an equivariant embedding of $G$ into $X^X$ (the group $G$ is considered a $G$-space with its action onto itself by multiplication on the left).
 
Suppose that $\mathcal U_X$ is the equiuniformity on the space $X$, $\mathcal U_{\Pi}$ is the equiuniformity on $X^X$ initial with respect to projections on factors, $\mathcal U$ is the restriction of the equiuniformity $\mathcal U_{\Pi}$ on $G=\imath(G)$. Closure of $\imath(G)$ in completion of $X^X$ with respect to $\mathcal U_{\Pi}$ is completion of $\imath(G)$ with respect to the uniformity $\mathcal U$. The base of the equiuniformity $\mathcal U$ is formed by coverings
$$\{U_{x_1, \dots, x_n; g; {\rm U}}=\{h\in G\ |\ (g(x_k), h(x_k))\in {\rm U}, k=1, \dots, n\}: g\in G\},$$
where $x_1,\dots, x_n\in X$, $n\in\mathbb N$, ${\rm U}$ is the entourage of the equiuniformity $\mathcal U_X$. 
 
The family $\mathcal K$ of stabilizers  
$${\rm St}_{x_1, \dots, x_n}=\bigcap\{{\rm St}_{x_k}\ |\ k=1, \dots, n\}$$
of points $x_1,\dots, x_n\in X$, $n\in\mathbb N$, is a directed family  $\mathcal K$ of small subgroups ($H'\leq H\Longleftrightarrow H'\subset H$) of the group $G$, which defines the equiuniformity $R_{\mathcal K}$ on $G$, whose base are coverings $\{OgH: g\in G\}$, $O\in N_G(e)$, $H\in \mathcal K$; $L\wedge R\subset R_{\mathcal K}\subset R$ \cite[\S4]{Kozlov}. An explicit type of coverings from the (possible) base is 
$$\{O_{x_1, \dots, x_n; {\rm U}}g{\rm St}_{x_1, \dots, x_n}\ |\ g\in G\},$$
where $x_1,\dots, x_n\in X$, $n\in \mathbb N$, $O_{x_1, \dots, x_n; {\rm U}}=\{h\in G\ |\ (x_k, h(x_k))\in {\rm U}, (x_k, h^{-1}(x_k))\in {\rm U}, k=1, \dots, n\}$,  ${\rm U}$ is the entourage of the uniformity $\mathcal U_X$. Note that $O^{-1}_{x_1, \dots, x_n; {\rm U}}=O_{x_1, \dots, x_n; {\rm U}}$.
 
If the uniformity $R_{\mathcal K}$ is totally bounded, then the group $G$ is Roelcke precompact \cite[Corollary 4.5]{Kozlov}. The following theorem generalizes Theorem 1 from \cite{Sorin}.
 
\begin{thm}\label{Roelcke precomp2-1}
 
{\rm (1)} $\mathcal U\subset R_{\mathcal K}$.
 
{\rm (2)} Let for any points $x_1,\dots, x_n\in X$, $n\in\mathbb N$, and for any entourage ${\rm U}\in \mathcal U_X$ there is an entourage ${\rm V}={\rm V}(x_1,\dots, x_n; {\rm U})\in \mathcal U_X$ such that the condition is met:

\noindent if for $g, h\in G$ $(g(x_k), h(x_k))\in {\rm V}$ holds, $k=1, \dots, n$, then there exists $g'\in g{\rm St}_{x_1, \dots, x_n}$ such that $h\in O_{x_1, \dots, x_n; {\rm U}}g'$.
 
\noindent Then $\mathcal U=R_{\mathcal K}$.
\end{thm}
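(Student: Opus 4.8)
The plan is to prove the two inclusions $\mathcal U\subset R_{\mathcal K}$ and $R_{\mathcal K}\subset\mathcal U$ separately, working at the level of basic coverings and using the standard fact that a covering belongs to a uniformity exactly when some basic covering of that uniformity refines it. Both directions rest on the same elementary coset identity: if $h=og\,s$ with $s\in{\rm St}_{x_1,\dots,x_n}$, then $h(x_k)=o\big(g(s(x_k))\big)=o\big(g(x_k)\big)$ for each $k$, since $s$ fixes the points $x_1,\dots,x_n$.

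For part (1), I would fix a basic covering $\{U_{x_1,\dots,x_n;g;{\rm U}}:g\in G\}$ of $\mathcal U$. Because $\mathcal U_X$ is an equiuniformity, the action $G\curvearrowright X$ is bounded, and the covering form of boundedness yields, after passing to a symmetric ${\rm U}_0$ with ${\rm U}_0\circ{\rm U}_0\subset{\rm U}$, a neighbourhood $O\in N_G(e)$ such that $(y,o(y))\in{\rm U}$ for all $y\in X$ and all $o\in O$. I then claim that the $R_{\mathcal K}$-covering $\{Og\,{\rm St}_{x_1,\dots,x_n}:g\in G\}$ refines the given one, keeping the same index $g$: for $h=og\,s$ the identity above gives $h(x_k)=o(g(x_k))$, whence $(g(x_k),h(x_k))\in{\rm U}$ by the uniform displacement property, so $h\in U_{x_1,\dots,x_n;g;{\rm U}}$. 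Thus each member $Og\,{\rm St}_{x_1,\dots,x_n}$ lies inside $U_{x_1,\dots,x_n;g;{\rm U}}$, giving $\mathcal U\subset R_{\mathcal K}$.

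For part (2), since (1) is already available it suffices to establish $R_{\mathcal K}\subset\mathcal U$. I would fix a basic covering of $R_{\mathcal K}$ in its explicit form $\{O_{x_1,\dots,x_n;{\rm U}}\,g\,{\rm St}_{x_1,\dots,x_n}:g\in G\}$ and take ${\rm V}={\rm V}(x_1,\dots,x_n;{\rm U})$ from the hypothesis. The claim is that the $\mathcal U$-covering $\{U_{x_1,\dots,x_n;g;{\rm V}}:g\in G\}$ refines it. Indeed, for $h\in U_{x_1,\dots,x_n;g;{\rm V}}$ we have $(g(x_k),h(x_k))\in{\rm V}$ for all $k$, so the hypothesis produces $g'\in g\,{\rm St}_{x_1,\dots,x_n}$ with $h\in O_{x_1,\dots,x_n;{\rm U}}\,g'$; writing $g'=g\,s$ with $s\in{\rm St}_{x_1,\dots,x_n}$ gives $h\in O_{x_1,\dots,x_n;{\rm U}}\,g\,s\subset O_{x_1,\dots,x_n;{\rm U}}\,g\,{\rm St}_{x_1,\dots,x_n}$. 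Hence $U_{x_1,\dots,x_n;g;{\rm V}}$ sits inside the $g$-indexed member of the $R_{\mathcal K}$-covering, which yields $R_{\mathcal K}\subset\mathcal U$ and, with (1), the equality $\mathcal U=R_{\mathcal K}$.

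I expect the only genuinely nontrivial step to be the use of boundedness in part (1): the crux is that a \emph{single} neighbourhood $O\in N_G(e)$ controls the displacement $(y,o(y))$ \emph{uniformly in} $y\in X$, which is precisely the boundedness clause of the definition of an equiuniformity; without it one could bound the motion only of finitely many prescribed points and the refinement of $\{U_{x_1,\dots,x_n;g;{\rm U}}\}$ would fail. Part (2), by contrast, is essentially a transcription of the stated hypothesis together with the coset identity, so there the whole content is packaged into the right choice of the entourage ${\rm V}$, which the hypothesis supplies by fiat.
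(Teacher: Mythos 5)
Your proof is correct and follows essentially the same route as the paper: part (2) coincides with the paper's argument verbatim (refining $\{U_{x_1,\dots,x_n;g;{\rm V}}\ |\ g\in G\}$ into the explicit basic covering $\{O_{x_1,\dots,x_n;{\rm U}}\,g\,{\rm St}_{x_1,\dots,x_n}\ |\ g\in G\}$ of $R_{\mathcal K}$), and part (1) produces exactly the same refining covering $\{Og\,{\rm St}_{x_1,\dots,x_n}\ |\ g\in G\}$, the only cosmetic difference being that you extract the uniform displacement property directly from boundedness of $\mathcal U_X$, whereas the paper packages the same boundedness argument through the equivariant projections $\pi_n:X^X\to X^n$ and the initiality of $\mathcal U_{\Pi}$.
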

 
\begin{proof}
(1) Any finite subproduct $X^n=X^{\{x_1,\dots, x_n\}}$ of the product $X^X$ is a $G$-space, the projection $\pi_n: X^X\to X^n$ is an equivariant mapping. The equiuniformity on $X^n$ is the product of equiuniformities $\mathcal U_X$. Therefore, in any uniform covering $u$ of any subproduct $X^n$ it is possible to refine a covering of the form $\{Ox\ |\ x\in X^n\}$ for some $O\in N_G(e)$. Then in the covering $\pi^{-1}_n u\wedge G=\{\pi^{-1}_nU\cap \ |\ U\in u\}$ the covering $\{\pi_n^{-1}(Ox)=O\pi_n^{-1}x\ |\ x\in X^n\}\wedge G$ is refined. 
 
For $x=(g(x_1),\dots, g(x_n))\in X^n$ $O\pi_n^{-1}x=Og{\rm St}_{x_1, \dots, x_n}$, therefore $\{O\pi_n^{-1}x\ |\ x\in X^n\}\wedge G=\{ Og{\rm St}_{x_1, \dots, x_n}\ |\ g\in G\}\in R_{\mathcal K}$. It remains to note that every covering from $\mathcal U$ has the form $\pi^{-1}_n u\wedge G$ by virtue of the definition of $\mathcal U=\mathcal U_{\Pi}|_{\imath(G)}$ and the initiality of $\mathcal U_{\Pi}$ with respect to projections $\pi_n$.
 
(2) To prove the equality $\mathcal U=R_{\mathcal K}$, by virtue of (1), it is sufficient to show that in the covering $$\{O_{x_1, \dots, x_n; {\rm U}}g{\rm St}_{x_1, \dots, x_n}\ |\ g\in G\},$$
where $x_1,\dots, x_n\in X$, $n\in \mathbb N$, $O_{x_1, \dots, x_n; {\rm U}}=\{h\in G\ |\ (x_k, h(x_k))\in {\rm U}, (x_k, h^{-1}(x_k))\in {\rm U}, k=1, \dots, n\}$,  ${\rm U}\in\mathcal U_X$,  it is possible to refine a covering 
$$\{U_{x_1, \dots, x_n; g; {\rm V}}=\{h\in G\ |\ (g(x_k), h(x_k))\in {\rm V}, k=1, \dots, n\}\ |\ g\in G\},$$
for some ${\rm V}\in\mathcal U_X$. 
 
By the condition for $x_1,\dots, x_n\in X$ and ${\rm U}\in\mathcal U_X$ there exists ${\rm V}\in\mathcal U_X$ such that if for $g, h\in G$ $(g(x_k), h(x_k))\in {\rm V}$, $k=1, \dots, n$, then there exists $g'\in g{\rm St}_{x_1, \dots, x_n}$ such that $h\in O_{x_1, \dots, x_n; {\rm U}}g'$. Then for any $h\in U_{x_1, \dots, x_n; g; {\rm V}}$ and any fixed $g\in G$ 
$$h\in O_{x_1, \dots, x_n; {\rm U}} g{\rm St}_{x_1, \dots, x_n}$$
and, therefore, $U_{x_1, \dots, x_n; g; {\rm V}}\subset   O_{x_1, \dots, x_n; {\rm U}}g{\rm St}_{x_1, \dots, x_n}$, i.e. the covering  
$\{U_{x_1, \dots, x_n; g; {\rm V}}\ |\ g\in G\}$ is refined in the covering of $\{O_{x_1, \dots, x_n; {\rm U}}g{\rm St}_{x_1, \dots, x_n}\ |\ g\in G\}$.
\end{proof}
 
\begin{cor} \label{Roelcke precomp2-2}
If $\mathcal U=R_{\mathcal K}$ and $\mathcal U_X$ is a totally bounded equiuniformity on $X$, then the group $G$ is Roelcke precompact.
\end{cor}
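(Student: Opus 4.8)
The plan is to reduce everything to the fact already quoted in the excerpt: if $R_{\mathcal K}$ is totally bounded, then $G$ is Roelcke precompact \cite[Corollary 4.5]{Kozlov}. Since by hypothesis $\mathcal U = R_{\mathcal K}$, it suffices to prove that the uniformity $\mathcal U$ on $G=\imath(G)$ is totally bounded. Recall how $\mathcal U$ was constructed: it is the restriction to $\imath(G)$ of the product (initial) equiuniformity $\mathcal U_{\Pi}$ on $X^X$, every factor of which carries the given totally bounded equiuniformity $\mathcal U_X$. So the whole task is to show that total boundedness of $\mathcal U_X$ forces total boundedness of $\mathcal U$.

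The key step is that total boundedness is both productive and hereditary. Since $\mathcal U_X$ is totally bounded, its completion is a compactum; the completion of $X^X$ with respect to $\mathcal U_{\Pi}$ is then the product of these compacta, which is compact by Tychonoff's theorem. As a uniformity is totally bounded precisely when its completion is compact, $\mathcal U_{\Pi}$ is totally bounded. Passing to the subspace $\imath(G)\subset X^X$, the restricted uniformity $\mathcal U=\mathcal U_{\Pi}|_{\imath(G)}$ is again totally bounded, because total boundedness is inherited by uniform subspaces. Hence $\mathcal U$, and with it $R_{\mathcal K}=\mathcal U$, has a base of finite coverings, and \cite[Corollary 4.5]{Kozlov} yields the Roelcke precompactness of $G$.

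There is no serious obstacle in this argument; the only point requiring any care is the passage from total boundedness of $\mathcal U_X$ to that of $\mathcal U$, that is, checking that total boundedness survives both forming the arbitrary power $X^X$ and restricting to $\imath(G)$. If one prefers to argue directly with coverings rather than completions, the same conclusion is immediate: each basic covering $\{U_{x_1,\dots,x_n;g;{\rm U}}\ |\ g\in G\}$ of $\mathcal U$ is the pullback, along the map $h\mapsto (h(x_1),\dots,h(x_n))$, of the uniform covering of $X^n$ by the ${\rm U}$-balls of the product equiuniformity. Because $X^n$ is totally bounded, finitely many such balls, say those centred at $(g_i(x_1),\dots,g_i(x_n))$ for $i=1,\dots,m$, already cover the image of $G$; then for every $h\in G$ one has $(g_i(x_k),h(x_k))\in {\rm U}$ for all $k$ and some $i$, so $h\in U_{x_1,\dots,x_n;g_i;{\rm U}}$. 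Thus finitely many members of the covering suffice, giving a base of finite coverings for $\mathcal U=R_{\mathcal K}$ and the desired Roelcke precompactness.
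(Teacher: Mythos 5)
Your proof is correct and takes essentially the same route as the paper: both arguments reduce the claim to the total boundedness of $R_{\mathcal K}=\mathcal U$ (inherited from the total boundedness of $\mathcal U_X$, since total boundedness survives products and uniform subspaces) and then invoke Kozlov's result --- you cite \cite[Corollary 4.5]{Kozlov} directly, while the paper cites \cite[Theorem 4.2]{Kozlov}, i.e.\ the inclusion $L\wedge R\subset R_{\mathcal K}$, from which that corollary immediately follows. The only difference is that you spell out the transfer of total boundedness from $\mathcal U_X$ to $\mathcal U$, which the paper dispatches in a parenthetical remark.
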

 
\begin{proof}
According to \cite[Theorem 4.2.]{Kozlov}, $L\wedge R\subset R_{\mathcal K}$. Therefore, from the total boundedness of $\mathcal U_X$ (and thus $\mathcal U$) and the equality $\mathcal U=R_{\mathcal K}$, it follows that the uniformity $L\wedge R$ is totally bounded. 
\end{proof}
 
\begin{rem}
The above construction of building extensions of transformation groups is used in {\rm\cite{Sorin}} for the group of homeomorphisms of the compactum $K$ in the topology of pointwise convergence. In this case, the only uniformity on $K$ is the equiuniformity. The resulting compactification is an enveloping Ellis semigroup {\rm\cite{Ellis}}. 
\end{rem}
 
\section{The Roelcke precompactness of subgroups $({\rm S}(X), \tau_{\partial})$} \label{R-prec_perm-top}
Let $G$ be a subgroup of the permutation group $({\rm S}(X), \tau_{\partial})$ of the discrete infinite space $X$. The base of the neighbourhoods of the unit of $G$ is formed by open-closed subgroups 
$${\rm St}_{x_1, \dots, x_n}, x_1,\dots, x_n\in X$$
and $G$ is non-Archimedean.
 
The base of the maximal equiuniformity $\mathcal U_X$ is formed by disjoint coverings of sets
$$\{{\rm St}_{x_1, \dots, x_n}x\ |\ x\in X\}, x_1,\dots, x_n\in X$$
(for the corresponding diagonal entourage ${\rm U}_{x_1, \dots, x_n}$) see, for example, \cite{ChK}, 
$\mathcal U$ is the equiuniformity on $G$, constructed in \S\ref{Ellis_constr}. 
 
The base of the Roelcke uniformity $L\wedge R$ on $G$ is formed by coverings
$$\{{\rm St}_{x_1, \dots, x_n}g{\rm St}_{x_1, \dots, x_n}\ |\ g\in G\}, x_1,\dots, x_n\in X,$$
which form the uniformity base of $R_{\mathcal K}$, constructed from a family of small subgroups $$\mathcal K=\{{\rm St}_{x_1, \dots, x_n}\ |\  x_1,\dots, x_n\in X\}.$$ Thus we have

\begin{pro}\label{coin}
$L\wedge R=R_{\mathcal K}$.\hfill$\square$
\end{pro}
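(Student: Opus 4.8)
The plan is to prove the equality by exhibiting a common base of uniform coverings for the two uniformities, namely the double-coset coverings
$$\{{\rm St}_{x_1, \dots, x_n}\,g\,{\rm St}_{x_1, \dots, x_n}\ |\ g\in G\},\quad x_1,\dots,x_n\in X.$$
Since the inclusion $L\wedge R\subset R_{\mathcal K}$ is already available from \cite[\S4]{Kozlov} (recorded in \S\ref{Ellis_constr}), it suffices to establish the reverse inclusion $R_{\mathcal K}\subset L\wedge R$. Concretely, I would show that every base covering of $R_{\mathcal K}$ is refined by a base covering of $L\wedge R$, which is exactly the statement that each $R_{\mathcal K}$-covering is a uniform covering of $L\wedge R$.

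First I would recall the standard description of the Roelcke uniformity: $L\wedge R$ admits the base of uniform coverings $\{UgU\ |\ g\in G\}$, $U\in N_G(e)$, these being the coverings associated with the entourages $\{(x,y)\ |\ y\in UxU\}$. Because $(G,\tau_{\partial})$ is non-Archimedean, $N_G(e)$ has a base consisting of the open-closed subgroups ${\rm St}_{x_1, \dots, x_n}$; taking $U={\rm St}_{x_1, \dots, x_n}$ shows that the double-coset coverings displayed above are genuine base coverings of $L\wedge R$. This identifies the candidate common base.

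Next I would handle $R_{\mathcal K}$. A base covering has the form $\{OgH\ |\ g\in G\}$ with $O\in N_G(e)$ and $H\in\mathcal K$. I would pick a stabilizer ${\rm St}_{y_1,\dots,y_m}\subset O$ (again using that stabilizers form a neighbourhood base of $e$), write $H={\rm St}_{x_1,\dots,x_n}$, and set $S={\rm St}_{x_1,\dots,x_n,y_1,\dots,y_m}={\rm St}_{x_1,\dots,x_n}\cap{\rm St}_{y_1,\dots,y_m}$, which is again a member of the directed family $\mathcal K$ and satisfies $S\subset O$ and $S\subset H$. Since $S$ is contained in both $O$ and $H$, for every $g\in G$ we get $SgS\subset OgH$, so the $L\wedge R$-covering $\{SgS\ |\ g\in G\}$ refines the $R_{\mathcal K}$-covering $\{OgH\ |\ g\in G\}$. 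Hence every base covering of $R_{\mathcal K}$ is a uniform covering of $L\wedge R$, giving $R_{\mathcal K}\subset L\wedge R$ and, combined with the known inclusion, the equality.

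I do not expect a genuine obstacle here; the argument is essentially a matching of covering bases. The only points requiring care are fixing the convention for the coverings generating $L$, $R$ and $L\wedge R$ and confirming the double-coset form $\{UgU\}$ of the Roelcke base, together with the observation that collapsing the general neighbourhood $O$ and the stabilizer $H$ to a single stabilizer $S$ uses only the non-Archimedean structure of $(G,\tau_{\partial})$ and the directedness of $\mathcal K$ noted in \S\ref{Ellis_constr}. Once these are in place the refinement $SgS\subset OgH$ is immediate, so the proof reduces to the one-line inclusion computation above.
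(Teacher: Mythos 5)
Your proposal is correct and matches the paper's argument: the paper proves Proposition \ref{coin} precisely by observing that the double-coset coverings $\{{\rm St}_{x_1,\dots,x_n}\,g\,{\rm St}_{x_1,\dots,x_n}\ |\ g\in G\}$ form a base simultaneously for $L\wedge R$ (since stabilizers are a neighbourhood base of $e$ in $\tau_{\partial}$) and for $R_{\mathcal K}$. Your write-up merely makes explicit the two refinement checks (collapsing $O$ and $H$ to a common stabilizer $S$, and citing $L\wedge R\subset R_{\mathcal K}$) that the paper leaves implicit.
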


\begin{df} The action of the group $G$ on the discrete space $X$ is oligomorphic if the correctly defined action $G\curvearrowright X^n$, $g(x_1, \ldots, x_n)=(gx_1, \ldots, gx_n)$, has a finite number of orbits, $n\in\mathbb N$.
\end{df}

\begin{thm}\label{Roelcke precomp3-1}
For the action $G\curvearrowright X$ on the discrete space $X$, the following conditions are equivalent:

{\rm (1)} the maximal equiuniformity $\mathcal U_X$ on $X$ is totally bounded;

{\rm (2)} the action of the group $G$ is oligomorphic;

{\rm (3)} the maximal equiuniformity $\mathcal U_{X^n}$ on $X^n$ is totally bounded for the action $G\curvearrowright X^n$, $n\in\mathbb N$.

If one of the equivalent conditions {\rm(1)--(3)} is met, then the group $G$ is Roelcke precompact.
\end{thm}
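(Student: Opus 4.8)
The plan is to prove the cycle of implications $(2)\Rightarrow(3)\Rightarrow(1)\Rightarrow(2)$ and then deduce Roelcke precompactness. The crucial preliminary step is to reformulate total boundedness of these equiuniformities in purely orbit-theoretic terms. Each diagonal entourage ${\rm U}_{x_1,\ldots,x_n}$ is the equivalence relation on $X$ whose classes are exactly the orbits of $\St_{x_1,\ldots,x_n}$ on $X$, and the associated partition coverings form a base of $\mathcal U_X$. Since the ${\rm U}_{x_1,\ldots,x_n}$-ball of a point is precisely its $\St_{x_1,\ldots,x_n}$-orbit, a finite cover of $X$ by ${\rm U}_{x_1,\ldots,x_n}$-small sets exists iff this orbit partition is finite. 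Hence $\mathcal U_X$ is totally bounded iff every finite stabilizer $\St_{x_1,\ldots,x_n}$ has finitely many orbits on $X$. Applying the same remark to the (again discrete) action $G\curvearrowright X^n$, and noting that the stabilizer of finitely many points of $X^n$ is the pointwise stabilizer in $G$ of the finite set $S\subset X$ of all their coordinates, condition $(3)$ becomes: for every $n$ and every finite $S\subset X$, the stabilizer $\St_S$ has finitely many orbits on $X^n$.

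The engine for the implications is the standard orbit-counting recursion: for a tuple $(x_1,\ldots,x_n)$, the $\St_{x_1,\ldots,x_n}$-orbits on $X$ correspond bijectively to the $G$-orbits on $X^{n+1}$ that contain a point of the form $(x_1,\ldots,x_n,\ast)$, because $(x_1,\ldots,x_n,y)$ and $(x_1,\ldots,x_n,y')$ lie in one $G$-orbit iff $y'\in\St_{x_1,\ldots,x_n}y$. Now $(3)\Rightarrow(1)$ is immediate on taking $n=1$. For $(1)\Rightarrow(2)$ I would induct on $n$ to show $G$ has finitely many orbits on $X^n$: the base case follows since $G$-orbits on $X$ are unions of the (finitely many, by $(1)$) $\St_x$-orbits, and the inductive step follows because the recursion gives that the number of $G$-orbits on $X^{n+1}$ equals the sum, over the finitely many $G$-orbits on $X^n$, of the number of $\St_{x_1,\ldots,x_n}$-orbits on $X$, each finite by $(1)$. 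For $(2)\Rightarrow(3)$, fix $n$ and a finite $S=\{s_1,\ldots,s_N\}\subset X$; the $\St_S$-orbits on $X^n$ correspond to the $G$-orbits on $X^{N+n}$ whose first $N$ coordinates equal $(s_1,\ldots,s_N)$, and there are only finitely many of these since $G$ is oligomorphic on $X^{N+n}$.

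Finally, for Roelcke precompactness I would invoke Proposition \ref{coin}, which identifies $L\wedge R$ with $R_{\mathcal K}$, a base of which consists of the double-coset coverings $\{\St_{x_1,\ldots,x_n}\,g\,\St_{x_1,\ldots,x_n}\mid g\in G\}$. The members of such a covering are the double cosets $\St_{x_1,\ldots,x_n}\backslash G/\St_{x_1,\ldots,x_n}$, which (via $G/\St_{x_1,\ldots,x_n}\cong G(x_1,\ldots,x_n)$) are in bijection with the $G$-orbits on $G(x_1,\ldots,x_n)\times G(x_1,\ldots,x_n)\subset X^{2n}$. By oligomorphy, condition $(2)$, there are finitely many $G$-orbits on $X^{2n}$, so each such covering is finite; thus $L\wedge R=R_{\mathcal K}$ has a base of finite coverings and is totally bounded, i.e. $G$ is Roelcke precompact.

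I expect the only real difficulty to be bookkeeping: establishing cleanly that total boundedness of these equivalence-relation uniformities is exactly finiteness of the corresponding orbit partitions, and then running the orbit-counting recursion through the induction and the double-coset computation. No idea beyond standard oligomorphic permutation-group combinatorics is needed once that translation between uniform and orbit-theoretic language is in place.
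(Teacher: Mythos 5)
Your proof is correct and takes essentially the same route as the paper: the same orbit-recursion lemma (stabilizer orbits on $X$ correspond to $G$-orbits on $X^{n+1}$ over a fixed tuple) drives the induction for (1)$\Rightarrow$(2), the same identification of stabilizers of points of $X^n$ with stabilizers of finite subsets of $X$ gives (2)$\Rightarrow$(3), and Roelcke precompactness is deduced from Proposition \ref{coin} by showing each double-coset covering is finite. The only cosmetic difference is in the last step: you count the double cosets $\St_{x_1,\dots,x_n}g\St_{x_1,\dots,x_n}$ via $G$-orbits on $X^{2n}$ (condition (2)), while the paper extracts a finite subcover of the $\St_{x}$-orbit covering of the orbit $Gx\subset X^n$ (condition (3)); both rest on the identity $hx\in\St_{x}gx\Longleftrightarrow h\in\St_{x}g\St_{x}$.
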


\begin{proof}  (1) $\Longrightarrow$ (2). We use the proof by induction.  The maximal equiuniformity $\mathcal U_X$ is totally bounded by the condition. So the action $G\curvearrowright X$ has a finite number of orbits.

Let the action $G\curvearrowright X^n$, $n\in\mathbb N$, has a finite number of orbits: $Y_1, \ldots, Y_k$, $y_1\in Y_1, \ldots, y_k\in Y_k$. Due to the total boundedness of $\mathcal U_X$ for any $j=1,\ldots, k$ the action ${\rm St}_{y_j}$ on $X$ has a finite number of orbits: $Z_{j1}, \ldots, Z_{jm}$, $z_1\in Z_{j1}, \ldots, z_m\in Z_{jm}$. We show that the orbits of the action $G\curvearrowright X^{n+1}$ are the sets $Y_j\times Z_{ji}$, $j=1,\ldots, k$, $i=1,\ldots, m$.

For the point $(y', x')\in X^{n}\times X$ let $y'\in Y_j$. There exists $g\in G$ such that $g(y_j)=y'$ (under the action $G\curvearrowright X^n$). Let $g^{-1}(y', x')=(y_j, x)$ (under the action $G\curvearrowright X^{n+1}$). There exist $z_i\in X$ and $h\in {\rm St}_{y_j}$ such that $h(z_i)=x$. Then $gh (y_j, z_i)=(y', x')$.

(2) $\Longrightarrow$ (1). Let the action of the group $G$ be oligomorphic, and the point $x=(x_1,\ldots, x_n)\in X^n$. If the orbits of action $G\curvearrowright X^{n+1}$ are the sets $Y_1, \ldots, Y_k$, then the orbits of action of the group  ${\rm St}_{x}$ on $X=\{x\}\times X$ are the sets $\{x\}\times X\cap Y_j$, $j=1,\ldots, k$. Thus, the uniformity $\mathcal U_X$ is totally bounded. 

(2) $\Longrightarrow$ (3). Since the unit neighbourhood base of the group $G$ is formed by subgroups of point stabilizers, and point stabilizers for actions 
$G\curvearrowright X^n$, $n\in\mathbb N$, allow natural identification (${\rm St}_{x_1,\ldots, x_m}$, $x_1,\ldots, x_m\in X^n$, coincides with the stabilizer of the coordinates of the points $x_1,\ldots, x_m$ under the action of $G\curvearrowright X$), then, first, the permutation topologies defined by the actions on $G$ coincide and, second, the maximal equiuniformity under the action of the subgroup ${\rm St}_{y_1,\ldots, y_m}$ on $X$ is totally bounded. From the equivalence of conditions (1) and (2), the action of the subgroup ${\rm St}_{x_1,\ldots, x_m}$ is oligomorphic and its action on $X^n$ has a finite number of orbits. Thus, the maximal equiuniformity $\mathcal U_{X^n}$ on $X^n$ under the action of $G\curvearrowright X^n$ is totally bounded.

The implication of (3) $\Longrightarrow$(1) is obvious.

To prove the last statement of the theorem, it is sufficient to show that any covering of $G$ of the form 
$$\{{\rm St}_{x_1, \dots, x_n}g{\rm St}_{x_1, \dots, x_n}\ |\ g\in G\}, x_1,\dots, x_n\in X$$
has a finite subcovering. 

Denote $x=(x_1,\ldots, x_n)\in X^n$, ${\rm St}_{x}={\rm St}_{x_1, \dots, x_n}$. It follows from condition (3) that for the subset 
$\{x\}\times Gx$ of the fiber $\{x\}\times X^n$ of the product $X^n\times X^n$ there exist $g_1,\ldots, g_m\in G$ such that
$$\{x\}\times Gx\subset\{x\}\times\big(\bigcup\{{\rm St}_{x}g_ix\ |\ i=1,\ldots, m\}\big).$$ Then for any $h\in G$ we have
$$(x, hx)=g(x, g_ix)=(gx, gg_ix)$$
for some $g\in {\rm St}_{x}$, and $i\in\{1, \ldots, m\}$. Hence, $g_i^{-1}g^{-1}h\in {\rm St}_{x}$, and $h\in {\rm St}_{x}g_i{\rm St}_{x}$.
\end{proof}

\begin{thm}\label{Roelcke precomp3-2} For the action $G\curvearrowright X$ on the discrete space $X$, the following conditions are equivalent:

{\rm (1)} the group $G$ is Roelcke precompact;

{\rm (2)} the maximal equiuniformity $\mathcal U_Y$ on $Y$ is totally bounded for the action $G\curvearrowright Y$ on any invariant subset $Y\subset X$, having a finite number of orbits;

{\rm (3)}  the restriction of action of the group $G$ on any invariant subset $Y\subset X$ having a finite number of orbits is oligomorphic;

{\rm (4)} the maximal equiuniformity $\mathcal U_{Y^n}$ on $Y^n$ is totally bounded for the action $G\curvearrowright Y^n$, $n\in\mathbb N$, where  $Y\subset X$ is an invariant subset having a finite number orbits.
\end{thm}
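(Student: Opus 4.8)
The plan is to reduce three of the four conditions to the previous theorem and to attach the Roelcke-precompactness clause by a double-coset argument. First I would observe that for each fixed invariant $Y\subset X$ with finitely many orbits, the action $G\curvearrowright Y$ is again an action on a discrete space, so Theorem \ref{Roelcke precomp3-1} applies to it verbatim: its conditions (1), (2), (3) read, for the space $Y$, exactly as conditions (2), (3), (4) of the present theorem. Since the internal equivalences of Theorem \ref{Roelcke precomp3-1} make no reference to the topology on $G$ nor to Roelcke precompactness, applying that theorem to $G\curvearrowright Y$ and quantifying over all admissible $Y$ yields $(2)\Leftrightarrow(3)\Leftrightarrow(4)$ at once. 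It then remains to splice condition (1) into this chain, and I would do so by proving $(1)\Rightarrow(2)$ and $(3)\Rightarrow(1)$.

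For $(1)\Rightarrow(2)$ I would invoke Proposition \ref{coin}: Roelcke precompactness of $G$ means $R_{\mathcal K}=L\wedge R$ is totally bounded, i.e. for every tuple $\bar z\in X^p$ the covering $\{{\rm St}_{\bar z}g{\rm St}_{\bar z}\ |\ g\in G\}$ is finite, equivalently ${\rm St}_{\bar z}$ has finitely many orbits on the orbit $G\bar z$. Fix an invariant $Y$ with orbit representatives $y_1,\ldots,y_k$ and a tuple $\bar y\in Y^m$; to establish total boundedness of $\mathcal U_Y$ I must show ${\rm St}_{\bar y}$ has finitely many orbits on $Y$, hence on each $Gy_j$. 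These orbits correspond to the double cosets ${\rm St}_{\bar y}\backslash G/{\rm St}_{y_j}$. The key step is a double-coset collapse: putting $\bar w=(\bar y,y_j)$ one has ${\rm St}_{\bar w}={\rm St}_{\bar y}\cap{\rm St}_{y_j}$, so the natural map ${\rm St}_{\bar w}\backslash G/{\rm St}_{\bar w}\to{\rm St}_{\bar y}\backslash G/{\rm St}_{y_j}$ is well defined and surjective, and finiteness of its source (Roelcke precompactness applied at $\bar w$) forces finiteness of its target. A finite union over $j$ then gives finitely many ${\rm St}_{\bar y}$-orbits on $Y$, so $\mathcal U_Y$ is totally bounded, which is (2).

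For $(3)\Rightarrow(1)$ I would mimic the final paragraph of the proof of Theorem \ref{Roelcke precomp3-1}. Given an arbitrary tuple $\bar x=(x_1,\ldots,x_n)\in X^n$, set $Y=Gx_1\cup\cdots\cup Gx_n$, an invariant subset with at most $n$ orbits; by (3) the action $G\curvearrowright Y$ is oligomorphic, so $G$ has finitely many orbits on $Y^{2n}\cong Y^n\times Y^n$. The assignment $g\bar x\mapsto(\bar x,g\bar x)$ induces an injection from the set of ${\rm St}_{\bar x}$-orbits on $G\bar x$ into the set of $G$-orbits on $Y^{2n}$, since $g\bar x,g'\bar x$ lie in one ${\rm St}_{\bar x}$-orbit precisely when $(\bar x,g\bar x)$ and $(\bar x,g'\bar x)$ lie in one $G$-orbit. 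Hence there are finitely many double cosets ${\rm St}_{\bar x}g{\rm St}_{\bar x}$, so every basic covering of $R_{\mathcal K}$ has a finite subcovering; by Proposition \ref{coin} the uniformity $L\wedge R=R_{\mathcal K}$ is totally bounded and $G$ is Roelcke precompact.

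The main obstacle is the double-coset collapse in $(1)\Rightarrow(2)$. Roelcke precompactness is intrinsically a statement about double cosets with the \emph{same} stabilizer on both sides, whereas total boundedness of $\mathcal U_Y$ forces control of the \emph{mixed} double cosets ${\rm St}_{\bar y}\backslash G/{\rm St}_{y_j}$, that is, the orbits of one stabilizer on another stabilizer's orbit. Recognizing that the common refinement ${\rm St}_{\bar w}={\rm St}_{\bar y}\cap{\rm St}_{y_j}$ surjects its double cosets onto the mixed ones, and that the hypothesis of finitely many $G$-orbits on $Y$ permits passage from a single orbit $Gy_j$ to all of $Y$ by a finite union, is exactly what bridges Roelcke precompactness and the equiuniformity-theoretic conditions of Theorem \ref{Roelcke precomp3-1}.
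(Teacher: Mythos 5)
Your proposal is correct, and two of its three components coincide with the paper's own argument. The equivalences (2)$\Leftrightarrow$(3)$\Leftrightarrow$(4) are obtained, exactly as in the paper, by applying Theorem \ref{Roelcke precomp3-1} to each invariant $Y$ separately. Your implication (1)$\Rightarrow$(2) is the paper's covering argument rewritten in double-coset language: the paper picks $V={\rm St}_{x_1,\ldots,x_m,y_1,\ldots,y_k}\subset O$, covers $G$ by finitely many sets $Vg_jV$, and uses $Vg_jVy_i=Vg_jy_i$ --- which is precisely your ``collapse'' via the enlarged tuple $\bar w$. Where you genuinely diverge is (3)$\Rightarrow$(1). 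The paper passes to the effective quotients $H_Y=G/N$ equipped with their own permutation topologies, shows each is Roelcke precompact by Theorem \ref{Roelcke precomp3-1}, organizes them into an inverse spectrum, and embeds $G$ densely into its Roelcke precompact inverse limit (Facts 1(1) and 1(5)). You instead count double cosets directly: with $Y=Gx_1\cup\dots\cup Gx_n$, the injection of the ${\rm St}_{\bar x}$-orbits on $G\bar x$ into the $G$-orbits on $Y^{2n}$ shows that each basic covering $\{{\rm St}_{\bar x}g{\rm St}_{\bar x}\ |\ g\in G\}$ has a finite subcovering, and Proposition \ref{coin} finishes. Your route is more elementary --- it avoids quotient groups, inverse limits, the separation/density argument, and also the paper's delicate assertion that the maximal equiuniformities generated on $Y$ by $G$ and by $H_Y$ coincide --- whereas the paper's route yields extra structural information ($G$ is a dense subgroup of an inverse limit of the groups $(H_Y,\tau_{\partial})$), which is the template reused later in Theorem \ref{Roelcke precomp5-3}.

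One point you should tighten in (1)$\Rightarrow$(2): the maximal equiuniformity $\mathcal U_Y$ of the action $G\curvearrowright Y$ is generated by the orbit coverings on $Y$ of \emph{all} open subgroups of the topological group $G$, i.e.\ of stabilizers ${\rm St}_{\bar z}$ of finite tuples $\bar z$ from $X$, not only from $Y$; a stabilizer of points lying outside $Y$ can partition $Y$ more finely than any stabilizer of points of $Y$, so restricting to $\bar y\in Y^m$ proves, a priori, a weaker statement. Your double-coset collapse repairs this at no cost: for $\bar z\in X^p$ put $\bar w=(\bar z,y_j)$ and run the same surjection ${\rm St}_{\bar w}\backslash G/{\rm St}_{\bar w}\to{\rm St}_{\bar z}\backslash G/{\rm St}_{y_j}$; nothing in the argument uses that the fixed points lie in $Y$. (The paper's version of this step already quantifies correctly, since it starts from an arbitrary $O\in N_G(e)$.)
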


\begin{proof} (1) $\Longrightarrow$ (2). Take one point from each orbit of the action $G\curvearrowright Y$: $y_1,\ldots, y_k$. For an arbitrary neighbourhood $O\in N_G(e)$ there is a neighbourhood of the form $V={\rm St}_{x_1,\ldots, x_m, y_1,\ldots, y_k}\subset O$. Due to the Roelcke precompactness of the group $G$, there exists a finite set $g_1,\ldots, g_n\in G$ such that $\bigcup\{Vg_jV\ |\ j=1,\ldots, n\}=G$. Then for any $i=1,\ldots, k$ 
$$\bigcup\{Vg_jVy_i\ |\ j=1,\ldots, n\}=Gy_i~ \mbox{and}~ \bigcup\{Og_jy_i\ |\ j=1,\ldots, n, i=1,\ldots, k\}=Y.$$
That is, any covering $\{Oy\ |\ y\in Y\}$ has a finite subcovering and the uniformity $\mathcal U_Y$ is totally bounded. 

The equivalence of conditions (2), (3) and (4) is proved in Theorem \ref{Roelcke precomp3-1}.

(3) $\Longrightarrow$ (1). Let $Y=\bigcup\{Gx_j\ |\ j=1,\ldots, k\}\subset X$ be an invariant subset having a finite number of orbits for the action  $G\curvearrowright Y$.

The kernel $N$ of the action $G\curvearrowright Y$ is a closed normal subgroup  of $G$. The effective action of the factor group $G/N\curvearrowright Y$ is correctly defined. If we consider the group $H_Y=G/N$ in the permutation topology, then the action $H_Y\curvearrowright Y$ and the natural homomorphism $\varphi_Y:G\to H_Y$ are continuous, the maximal equiuniformities $\mathcal U|_{X}$ and $\mathcal U_Y$, generated by the action of $G$ and $H_Y$ on $Y$, respectively, coincide. The topological group $H_Y$ is Roelcke precompact by Theorem \ref{Roelcke precomp3-1}.

The family of invariant subsets of $X$ having a finite number of orbits for the action $G\curvearrowright Y$ forms an inclusion-directed set. If $Y'\subset Y$, then the homomorphism $\varphi_{YY'}:H_Y\to H_{Y'}$ (factorization by the action kernel) is defined, for which $\varphi_{YY'}\circ\varphi_Y=\varphi_{Y'}$. Thus, the inverse spectrum $\{H_Y, \varphi_{YY'}, Y\}$ from Roelcke precompact groups and homomorphisms is determined. Its inverse limit is a Roelcke precompact group (Fact 1(5)). 

Since the action of $G$ is effective, the family of surjective homomorphisms $\varphi_Y:G\to H_Y$ is a separating (points and closed sets) family of maps  (${\rm St}_{x_1, \dots, x_n}$ under the action $G\curvearrowright X$ contains the kernel of the action $G$ on $Y=\bigcup\{Gx_i\ |\ i=1,\ldots, n\}$ and the prototype of ${\rm St}_{x_1, \dots, x_n}$ under the action $H_Y\curvearrowright Y$, $x_1,\dots, x_n\in Y$). Thus, $G$ is a dense subgroup of the inverse limit  $\{H_Y, \varphi_{YY'}, Y\}$, and is Roelcke precompact (Fact 1 (1)). 
\end{proof}

\begin{thm} \label{Roelcke precomp3-3} 
Let $X$ be a simple chain. 

{\rm(1)} If $X$ is rigid, then the group $(\aut(X), \tau_p)$   {\rm(}and hence $(\aut(X), \tau_{\partial})${\rm)} is not Roelcke precompact. 

{\rm(2)} If $X$ is ultrahomogeneous, then the group $(\aut(X), \tau_{\partial})${\rm(}and hence $(\aut(X), \tau_p)${\rm)}  is Roelcke precompact.
\end{thm}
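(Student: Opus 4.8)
The plan is to reduce everything to the dichotomy supplied by the classification Theorem~\ref{Classific_gr}: a simple homogeneous chain $X$ is either rigid---and then order-isomorphic to a nontrivial subgroup of $(\mathbb R,+)$, with $\aut(X)\cong X$ acting by translations---or $2$-homogeneous, equivalently ultrahomogeneous in the order sense of Facts~4(2). These two alternatives match exactly the two cases of the theorem, so in each case I may assume the extra structural information the classification provides.

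For part (1) I would first identify $(\aut(X),\tau_p)$ with $(X,\tau)$ as topological groups. Since the action is uniquely transitive and order-preserving, fixing a base point and the identification $\aut(X)\cong X\le\mathbb R$ realizes each $g\in\aut(X)$ as a translation $x\mapsto x+a$; evaluation at any point depends continuously (by a translation) on the amount $a$, so pointwise convergence of automorphisms is convergence of the translation amounts in $\tau$, and the orbit map is a topological group isomorphism onto $(X,+)$. As $X$ is abelian the left and right uniformities coincide, whence $L\wedge R=L=R$ is the single group uniformity, namely the restriction to $X$ of the usual uniformity of $\mathbb R$. A nontrivial subgroup of $\mathbb R$ is unbounded, hence not totally bounded, so $(\aut(X),\tau_p)$ is not Roelcke precompact. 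The parenthetical assertion for $\tau_{\partial}$ is then immediate from Corollary~\ref{top_precomp}, $\tau_{\partial}$ being an admissible group topology by Proposition~\ref{perm_least_adm_top}.

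For part (2) the goal is to verify that the action $\aut(X)\curvearrowright X$ on the discrete space $X$ is oligomorphic and then invoke Theorem~\ref{Roelcke precomp3-1}. To count orbits on $X^n$ I would attach to each tuple $(x_1,\dots,x_n)$ its order type, the total preorder on $\{1,\dots,n\}$ defined by $i\preceq j\iff x_i\le x_j$. There are only finitely many total preorders on $n$ elements, so it suffices to show that two tuples of the same order type lie in one $\aut(X)$-orbit. Listing the distinct coordinate values of each tuple in increasing order, the common order type matches these lists index by index; order-ultrahomogeneity (Facts~4(2)) yields $g\in\aut(X)$ carrying the one increasing list of distinct values onto the other, and this same $g$ then carries the first tuple to the second. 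Hence $\aut(X)\curvearrowright X^n$ has finitely many orbits for every $n$, the action is oligomorphic, and Theorem~\ref{Roelcke precomp3-1} gives that $(\aut(X),\tau_{\partial})$ is Roelcke precompact; the statement for $\tau_p$ follows from Corollary~\ref{top_precomp}.

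I expect the only genuinely delicate point to be the orbit computation in part (2): one must use that order-ultrahomogeneity preserves order (sending increasing tuples to increasing tuples), which is precisely what collapses all tuples of a fixed order type into a single orbit, and one must check that allowing equalities among coordinates does not break finiteness---this is why passing to the finite list of distinct values and applying Facts~4(2) there is the clean argument. Part (1) is comparatively routine once the identification $(\aut(X),\tau_p)\cong(X,+)$ is in hand and the abelian reduction $L\wedge R=L=R$ is noted.
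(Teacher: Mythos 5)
Your proposal is correct and follows essentially the paper's own route: in part (1) you give the same argument (via Theorem~\ref{Classific_gr}, identify $(\aut(X),\tau_p)$ with an unbounded subgroup of the abelian group $\mathbb R$, note that all group uniformities then coincide and fail to be totally bounded, and finish with Corollary~\ref{top_precomp}), and in part (2) you likewise reduce to Theorem~\ref{Roelcke precomp3-1} plus Corollary~\ref{top_precomp}. The only divergence is which of the equivalent conditions of Theorem~\ref{Roelcke precomp3-1} gets verified: the paper checks condition (1) by exhibiting the finite coverings $(\gets, x_1)\cup\{x_1\}\cup(x_1,x_2)\cup\dots\cup\{x_n\}\cup(x_n,\to)$ forming the base of the maximal equiuniformity, while you check the equivalent condition (2) (oligomorphy) by counting order types of $n$-tuples --- two faces of the same use of ultrahomogeneity.
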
 
 
\begin{proof} (1) By Theorem \ref{Classific_gr} $X$ is an (unbounded) subgroup of the abelian group $\mathbb R$. If $X$ is discrete, then $X$ is isomorphic to $\mathbb Z$, the group $\aut(X)$ in the topology $\tau_{\partial}=\tau_p$ on $\aut(X)$ is isomorphic to $\mathbb Z$ and is not Roelcke precompact. 
 
If $X$ is dense, then $X$ is a dense unbounded subset of $\mathbb R$, the group $\aut(X)$ in the topology $\tau_p$ is a dense unbounded subgroup of the abelian topological group $\mathbb R$ on which all group uniformities coincide and are not totally bounded.  Hence $(\aut(X), \tau_p)$ (and by Corollary \ref{top_precomp}  $(\aut(X), \tau_{\partial})$) is not Roelcke precompact. 
 
(2)  The base of the maximal equiuniformity $\mathcal U_{\partial}$ on the discrete space $(X,\tau_d)$ under the action $(\aut(X), \tau_{\partial})\curvearrowright (X, \tau_d)$ is formed by finite coverings
$$(\gets, x_1)\cup\{x_1\}\cup(x_1, x_2)\cup\{x_2\}\cup\dots\cup(x_{n-1}, x_n)\cup\{x_n\}\cup(x_n, \to), x_1<\ldots <x_n, n\in\mathbb N,$$
and the uniformity $\mathcal U_{\partial}$ is totally bounded. By  Theorem \ref{Roelcke precomp3-1}, the group $(\aut(X), \tau_{\partial})$ is Roelcke precompact.

By Corollary \ref{top_precomp} the group $(\aut(X), \tau_p)$ is Roelcke precompact. 
\end{proof}

From Proposition \ref{perm_least_adm_top} we have

\begin{cor}\label{Roelcke precomp3-4} Let $X$ be a simple chain.

{\rm (1)}  $X$ is rigid $\Longleftrightarrow$ the group $\aut(X)$ is not Roelcke precompact in any admissible group topology for its action on the corresponding $X$ homogeneous {\rm GO}-spaces. 

{\rm (2)}   $X$ is ultrahomogeneous $\Longleftrightarrow$ the group $(\aut(X), \tau_{\partial})$ is Roelcke precompact $\Longleftrightarrow$ the group $(\aut(X), \tau_p)$  is Roelcke precompact.

{\rm (3)}  The group $(\aut(X), \tau_{\partial})$ is Roelcke precompact iff the group $(\aut(X), \tau_p)$  is Roelcke precompact. \hfill $\square$
\end{cor}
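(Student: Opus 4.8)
The plan is to deduce the Corollary from the main result Theorem~\ref{Roelcke precomp3-3}, the structural dichotomy Theorem~\ref{Classific_gr}, and the comparison of admissible group topologies provided by Proposition~\ref{perm_least_adm_top}. First I would record two inputs. By Theorem~\ref{Classific_gr} a simple chain $X$ is \emph{either} ultrahomogeneous \emph{or} rigid, these two alternatives being mutually exclusive. Moreover, the only topologies turning $X$ into a homogeneous GO-space are $\tau$, $\tau_\to$, $\tau_\gets$, $\tau_d$ (the preceding GO-space lemma), and by Proposition~\ref{perm_least_adm_top} the smallest admissible group topology for the action $\aut(X)\curvearrowright(X,\tau)$ is $\tau_p$, while the smallest admissible group topology for each of the actions on $(X,\tau_\to)$, $(X,\tau_\gets)$, $(X,\tau_d)$ is $\tau_\partial\geq\tau_p$. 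Hence every admissible group topology $\sigma$ on $\aut(X)$ coming from any homogeneous GO-space action satisfies $\sigma\geq\tau_p$; this single observation is what drives part~(1).

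I would prove part~(2) first, as a cycle of three implications. If $X$ is ultrahomogeneous, then $(\aut(X),\tau_\partial)$ is Roelcke precompact by Theorem~\ref{Roelcke precomp3-3}(2); since $\tau_\partial\geq\tau_p$, Corollary~\ref{top_precomp} gives that $(\aut(X),\tau_p)$ is Roelcke precompact; and if $(\aut(X),\tau_p)$ is Roelcke precompact then $X$ is ultrahomogeneous, for otherwise the dichotomy forces $X$ rigid, whence $(\aut(X),\tau_p)$ is \emph{not} Roelcke precompact by Theorem~\ref{Roelcke precomp3-3}(1) --- a contradiction. This closes the cycle and yields the three equivalences of~(2). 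Part~(3) is then immediate, since by~(2) each of its two Roelcke-precompactness assertions is equivalent to the ultrahomogeneity of $X$.

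For part~(1), the forward implication uses the observation of the first paragraph: if $X$ is rigid then $(\aut(X),\tau_p)$ is not Roelcke precompact by Theorem~\ref{Roelcke precomp3-3}(1), and since every admissible group topology $\sigma$ for a homogeneous GO-space action dominates $\tau_p$, Lemma~\ref{top_uniform}(2) (equivalently Corollary~\ref{top_precomp}) transfers non-Roelcke-precompactness to $(\aut(X),\sigma)$ for all such $\sigma$. Conversely, if $\aut(X)$ is not Roelcke precompact in any admissible group topology, then in particular $(\aut(X),\tau_\partial)$ is not Roelcke precompact (recall $\tau_\partial$ is admissible for each of these actions by Proposition~\ref{perm_least_adm_top}), so by part~(2) $X$ is not ultrahomogeneous, and by the dichotomy $X$ is rigid. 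The only delicate point --- the main obstacle --- is confirming that ``any admissible group topology for its action on the corresponding $X$ homogeneous GO-spaces'' is uniformly bounded below by $\tau_p$ across all four GO-space structures; this is exactly what Proposition~\ref{perm_least_adm_top} supplies, after which the transfer of (non-)Roelcke-precompactness is routine via Lemma~\ref{top_uniform}.
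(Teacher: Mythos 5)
Your proposal is correct and follows essentially the same route as the paper, which states the Corollary as an immediate consequence of Proposition~\ref{perm_least_adm_top} together with Theorem~\ref{Roelcke precomp3-3} and the dichotomy of Theorem~\ref{Classific_gr}; your write-up just makes explicit the same transfers of (non-)precompactness via Corollary~\ref{top_precomp} and Lemma~\ref{top_uniform}.
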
 
 
\begin{ex}\label{Roelcke precomp3-5}{\rm (1) $\mathbb Z$ is a rigid set.  The group $\Aut(\mathbb Z)$ is isomorphic to $\mathbb Z$, is $o$-primitive, uniquely transitive, and by Corollary \ref{Roelcke precomp3-4} is not Roelcke precompact in a discrete topology equal to $\tau_p=\tau_{\partial}$.

(2) $\mathbb Q, \mathbb P, \mathbb R$, $(0,1)$, $\mathbb Z\otimes_{\ell}\mathbb Q$ (isomorphic to $\mathbb Q$) and $\mathbb Z\otimes_{\ell}\mathbb P$ (isomorphic to $\mathbb P$) are ultrahomogeneous sets. Their automorphism groups by Corollary \ref{Roelcke precomp3-4} are Roelcke precompact in the topologies $\tau_{\partial}$ and $\tau_p$.

(3)  $\mathcal L=[0,\omega_1)\otimes_{\ell} [0,1)$ is a long ray, $\mathcal L_{-}$ is a long ray $\mathcal L$ with reverse linear ordering.

It is easy to check that $L=\mathcal L\setminus \{(0,0)\}\subset\mathcal L$, $L_{-}=\mathcal L_{-}\setminus \{(0,0)\}\subset\mathcal L_{-}$ and $\tilde L=L_{-}\lozenge\{0\}\lozenge L$ are ultrahomogeneous sets and, by Corollary \ref{Roelcke precomp3-4}, groups of their automorphisms $\Aut(\star)$ are Roelcke precompact in the topologies $\tau_{\partial}$ and $\tau_p$ (see. \cite[§3, p. 3.3]{Sorin} for the topology of pointwise convergence).

(4) The Sorgenfrey line $\mathbb S$ is an ultrahomogeneous chain that is a GO-space. 

By Proposition {\rm\ref{perm_least_adm_top}}, the permutation topology $\tau_{\partial}$ is the smallest admissible group topology on the group $\Aut(\mathbb S)$, and the group $(\Aut(\mathbb S), \tau_{\partial})$ is Roelcke precompact by Corollary {\rm\ref{Roelcke precomp3-4}}.

(5) For the group $\aut({\rm \bf D})$ of LOTS "two arrows" ${\rm \bf D}=\{(0, 1)\}\lozenge\big( (0, 1)\otimes_{\ell} (0, 1)\big)\lozenge \{(1, 0)\}$, the smallest admissible group topology is the topology of pointwise convergence $\tau_p$. $\tau_p=\tau_{\partial}$, since $$\st_{(x, i)}=[(x, 1), [(x, 1), \to)]\bigcap [(x, 0), (\gets, (x, 0)]], x\ne 0, 1, i=0,1.$$
It is easy to check that the maximal equiuniformity under the action $(\aut({\rm \bf D}), \tau_{\partial})\curvearrowright ({\rm \bf D}, \tau_d)$ is totally bounded. Hence by Theorem \ref{Roelcke precomp3-1} the group  $(\aut({\rm \bf D}), \tau_{\partial})$ is Roelcke precompact.

A different approach is possible. LOTS $D=(0,1)\otimes_{\ell}\{-1,1\}$ is a subspace of ${\rm \bf D}$ and the automorphism groups $\Aut (D)$ and $\Aut ({\rm \bf D})$ in permutation topologies are topologically isomorphic. It can be shown that the group $(\Aut(D), \tau_{\partial})$ ($\tau_{\partial}$ is the smallest admissible group topology) is topologically isomorphic to the group $(\Aut((0, 1), \tau_d), \tau_{\partial})$. Hence, the group $(\aut({\rm \bf D}), \tau_{\partial})$ is Roelcke precompact (point (2)).

(6) The group $\aut(\mathbb M)$ of the GO-space the "Michael  line" $\mathbb M$ is naturally identified with the automorphism groups $\aut(\mathbb P)$ and $\aut(\mathbb Q)$ of invariant subsets: irrational numbers $\mathbb P$ in discrete topology and rational numbers $\mathbb Q$ in linear order topology, respectively. 

The smallest admissible group topology on $\Aut(\mathbb P)$ is the permutation topology $\tau_{\partial M}$, the smallest admissible group topology on $\Aut(\mathbb Q)$ is the topology of pointwise convergence $\tau_{p M}$. It is easy to check that $\tau_{\partial M}\geq \tau_{p M}$. Therefore, the smallest admissible group topology on $\Aut(\mathbb M)$ will be $\tau_{\partial M}$, in which $\Aut(\mathbb M)$ is Roelcke precompact (point (2)).

(7) The group $\aut({\rm \bf K})$ of the lexicographically ordered square ${\bf K}$ is Roelcke precompact in the topologies $\tau_{\partial}$ and $\tau_p$, since it is easy to check that the maximal equiuniformity under the action $(\aut({\rm \bf K}), \tau_{\partial})\curvearrowright ({\rm \bf K}, \tau_d)$ is totally bounded. A different approach will be presented in \S5.}
\end{ex}

\begin{rem} {\rm In Theorem \ref{Roelcke precomp3-3} and Corollary \ref{Roelcke precomp3-4} the automorphism group of an ultrahomogeneous simple set can be replaced by any of its subgroups acting in an ultratransitive way. In particular, this is due to the fact that such subgroups are dense subgroups of the automorphism group in the permutation topology.} 
\end{rem}
 
\section{The Roelcke compactifications of subgroups of ${\rm S}(X)$} \label{R-prec_perm-top}
{\bf I.} Let $G$ be a subgroup of the permutation group $({\rm S}(X), \tau_{\partial})$  of the discrete infinite space~$X$.

\begin{thm}\label{Roelcke precomp4-1}
\noindent {\rm (1)} $L\wedge R=\mathcal U$, if for points $x_1,\ldots, x_n\in X$, $n\in\mathbb N$, and any $g, h\in G$ such that $h(x_k)\in {\rm St}_{x_1, \dots, x_n}g(x_k)$, $k=1,\ldots, n$, there exist $f\in {\rm St}_{x_1, \dots, x_n}$ and $g'\in g{\rm St}_{x_1, \dots, x_n}$ such that $h=f\circ g'$.

\noindent  {\rm (2)} Let $L\wedge R=\mathcal U$ and the uniformity $\mathcal U_X$ on $X$ is totally bounded. Then $G$ is Roelcke precompact and the Roelcke compactification of $G$ is the closure of $\imath(G)=G$ in $(\beta_G X)^X$. 

\noindent  {\rm (3)}  Let $L\wedge R=\mathcal U$ and the uniformity $\mathcal U_X$ on $X$ is totally bounded. If for any point $x\in\beta_G X\setminus X$ and any $U\in\tilde{\mathcal U}_X$ {\rm(}extension of $\mathcal U_X$ to $\beta_G X${\rm)} there exist 
\noindent points $x_1,\ldots, x_n\in X$ and  $V\in\tilde{\mathcal U}_X$ such that for any $g\in G$ 
$$\{h\in G\ |\ (g(x_k), h(x_k))\in V,\ k=1,\ldots, n\}\subset\{h\in G\ |\ (g(x), h(x))\in U\},\leqno{(\star)}$$
then $\mathcal U=\tilde{\mathcal U}$ {\rm(}$\tilde{\mathcal U}$ is the restriction of the equiuniformity on the product $(\beta_G X)^{\beta_G X}$ onto $G=\jmath(G)${\rm)} and the Roelcke compactification of $(G, \tau_{\partial})$ is the enveloping Ellis semigroup the action  $(G, \tau_{\partial})\curvearrowright\beta_G X$ {\rm (}the closure of $\jmath(G)=G$ in $(\beta_G X)^{\beta_G X}${\rm )}.
\end{thm}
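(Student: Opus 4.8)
The plan is to dispatch (1) and (2) quickly from the apparatus of \S\ref{Ellis_constr} together with Proposition \ref{coin}, and to concentrate the work on (3). For (1) I would specialize Theorem \ref{Roelcke precomp2-1}(2) to the permutation setting, taking ${\rm V}={\rm U}={\rm U}_{x_1,\dots,x_n}$. The point to verify first is that here $O_{x_1,\dots,x_n;{\rm U}_{x_1,\dots,x_n}}={\rm St}_{x_1,\dots,x_n}$: since ${\rm St}_{x_1,\dots,x_n}$ fixes each $x_k$, its orbit of $x_k$ is $\{x_k\}$, so $(x_k,h(x_k))\in{\rm U}_{x_1,\dots,x_n}$ forces $h(x_k)=x_k$ (and symmetrically for $h^{-1}$). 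Consequently the $R_{\mathcal K}$-covering $\{O_{x_1,\dots,x_n;{\rm U}}g{\rm St}_{x_1,\dots,x_n}\}$ coincides with the Roelcke base $\{{\rm St}_{x_1,\dots,x_n}g{\rm St}_{x_1,\dots,x_n}\}$, and the entourage condition $(g(x_k),h(x_k))\in{\rm U}_{x_1,\dots,x_n}$ becomes the orbit relation $h(x_k)\in{\rm St}_{x_1,\dots,x_n}g(x_k)$. Under these translations the hypothesis of (1) is precisely the hypothesis of Theorem \ref{Roelcke precomp2-1}(2), whose conclusion $\mathcal U=R_{\mathcal K}$ together with Proposition \ref{coin} yields $L\wedge R=R_{\mathcal K}=\mathcal U$.

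For (2), Roelcke precompactness is immediate: by Proposition \ref{coin} the hypothesis $L\wedge R=\mathcal U$ means $\mathcal U=R_{\mathcal K}$, so Corollary \ref{Roelcke precomp2-2} applies once $\mathcal U_X$ is totally bounded. To identify the compactification I would use the description in \S\ref{Ellis_constr}: the closure of $\imath(G)$ in the $\mathcal U_{\Pi}$-completion of $X^X$ is the $\mathcal U$-completion of $G$. Total boundedness of $\mathcal U_X$ makes its completion $\beta_G X$, hence the $\mathcal U_{\Pi}$-completion of $X^X$ is $(\beta_G X)^X$ and the closure of $\imath(G)=G$ there is the $\mathcal U$-completion; since $\mathcal U=L\wedge R$ is totally bounded, this completion is compact and is the Roelcke compactification.

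The substance is (3), where the coordinate index set must be enlarged from $X$ to $\beta_G X$, and I would establish $\mathcal U=\tilde{\mathcal U}$ by two inclusions. The inclusion $\mathcal U\subset\tilde{\mathcal U}$ is automatic, as $\tilde{\mathcal U}$ uses all coordinates in $\beta_G X\supset X$ and is thus finer. For $\tilde{\mathcal U}\subset\mathcal U$ I would take a basic $\tilde{\mathcal U}$-covering determined by finitely many $z_1,\dots,z_m\in\beta_G X$ and an entourage $U\in\tilde{\mathcal U}_X$; each $z_j\in X$ already gives a $\mathcal U$-coordinate, while for each $z_j\in\beta_G X\setminus X$ condition $(\star)$ supplies finitely many $x_1^j,\dots,x_{n_j}^j\in X$ and $V_j\in\tilde{\mathcal U}_X$ controlling the $z_j$-coordinate. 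Collecting the finite list of all these $X$-points and an entourage $W\subset U\cap\bigcap_j V_j$, the associated $\mathcal U$-covering refines the chosen $\tilde{\mathcal U}$-covering, because keeping those $X$-coordinates inside $W$ forces, through $(\star)$, each $z_j$-coordinate inside $U$. Hence $\tilde{\mathcal U}\subset\mathcal U$ and $\mathcal U=\tilde{\mathcal U}=L\wedge R$.

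It then remains to identify this completion with the Ellis semigroup. As $\beta_G X$ is compact its unique uniformity is an equiuniformity, so the construction of \S\ref{Ellis_constr} applied to the compact $G$-space $\beta_G X$ embeds $G$ through $\jmath$ into $(\beta_G X)^{\beta_G X}$ and restricts the product uniformity to $\tilde{\mathcal U}$; the $\tilde{\mathcal U}$-completion is then the closure of $\jmath(G)$ in the compactum $(\beta_G X)^{\beta_G X}$, which is by definition the enveloping Ellis semigroup of $(G,\tau_{\partial})\curvearrowright\beta_G X$ with its right-continuous multiplication. Since $\tilde{\mathcal U}=\mathcal U=L\wedge R$ is totally bounded, this completion is compact and equals the Roelcke compactification of $(G,\tau_{\partial})$. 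I expect the main obstacle to be the second inclusion in (3): verifying that $(\star)$ lets every coordinate from $\beta_G X\setminus X$ be absorbed by finitely many $X$-coordinates simultaneously, so that a single $\mathcal U$-covering refines the whole $\tilde{\mathcal U}$-covering across all of $z_1,\dots,z_m$, and confirming that the resulting object genuinely inherits the enveloping-semigroup structure and not merely the uniform one.
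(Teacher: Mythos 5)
Your proposal is correct and follows essentially the same route as the paper: part (1) rests on the identification $O_{x_1,\dots,x_n;{\rm U}_{x_1,\dots,x_n}}={\rm St}_{x_1,\dots,x_n}$ and the refinement mechanism of Theorem \ref{Roelcke precomp2-1} combined with Proposition \ref{coin}, part (2) is the same completion argument, and part (3) uses the same two inclusions, with $\mathcal U\subset\tilde{\mathcal U}$ via the projection $(\beta_G X)^{\beta_G X}\to(\beta_G X)^X$ and $(\star)$ giving the reverse refinement before identifying the closure of $\jmath(G)$ in the compactum $(\beta_G X)^{\beta_G X}$ as both the Roelcke compactification and the Ellis semigroup. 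If anything, your treatment of (3) — collecting the finitely many points $x_k^j$ and a common entourage $W\subset U\cap\bigcap_j V_j$ so that one $\mathcal U$-covering refines a $\tilde{\mathcal U}$-covering determined by several coordinates $z_1,\dots,z_m$ at once — spells out bookkeeping that the paper leaves implicit, since it states the refinement only for a single point of $\beta_G X\setminus X$.
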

 
\begin{proof} Since $\mathcal U$ is an equiuniformity on $G$ and $L\wedge R=R_{\mathcal K}$ (Proposition \ref{coin}), then to prove (1), by Theorem \ref{Roelcke precomp2-1}, it is necessary to show that  
$L\wedge R\subset \mathcal U$. 
 
For any covering
$\{{\rm St}_{x_1, \dots, x_n}g{\rm St}_{x_1, \dots, x_n}\ |\ g\in G\}, x_1,\dots, x_n\in X$, consider the covering 
$$\{U_{x_1, \dots, x_n; g; {\rm U}_{x_1, \dots, x_n}}=\{h\in G\ |\ (g(x_k), h(x_k))\in {\rm U}_{x_1, \dots, x_n}, k=1, \dots, n\}\ |\ g\in G\},$$
${\rm U}_{x_1, \dots, x_n}$ is the diagonal entourage corresponding to the covering $\{{\rm St}_{x_1, \dots, x_n}x\ |\ x\in X\}$ of the space $X$.
 
If $h\in U_{x_1, \dots, x_n; g; {\rm U}_{x_1, \dots, x_n}}$, then $ (g(x_k), h(x_k))\in {\rm U}_{x_1, \dots, x_n}$ iff $h(x_k)\in {\rm St}_{x_1, \dots, x_n} g(x_k)$, $k=1,\ldots, n$. By the condition of the theorem, there exist $f\in {\rm St}_{x_1, \dots, x_n}$ and $g'\in g{\rm St}_{x_1, \dots, x_n}$ such that $h=f\circ g'$ $\Longrightarrow$ $h\in {\rm St}_{x_1, \dots, x_n}g{\rm St}_{x_1, \dots, x_n}$ $\Longrightarrow$ $U_{x_1, \dots, x_n; g; {\rm U}}\subset {\rm St}_{x_1, \dots, x_n}g{\rm St}_{x_1, \dots, x_n}$ $\Longrightarrow$ the covering $\{U_{x_1, \dots, x_n; g; {\rm U}}\ |\ g\in G\}$ is refined in the covering $\{{\rm St}_{x_1, \dots, x_n}g{\rm St}_{x_1, \dots, x_n}: g\in G\}$ $\Longrightarrow$ $L\wedge R\subset \mathcal U$. 
 
Additionally, we note that  
$$\{{\rm St}_{x_1, \dots, x_n}g{\rm St}_{x_1, \dots, x_n}\ |\ g\in G\}=\{U_{x_1, \dots, x_n; g; {\rm U}_{x_1, \dots, x_n}}\ |\ g\in G\}.$$

(2) If the uniformity $\mathcal U_X$ is totally bounded then $\mathcal U$ is totally bounded $\stackrel{L\wedge R=\mathcal U}{\Longrightarrow}$ $G$ is Roelcke precompact. 

Since the completion $\tilde X^{\mathcal U_X}$ is $\beta_G X$,  $L\wedge R=\mathcal U$, then from setting a totally bounded uniformity $\mathcal U_{\Pi}$ on the product $X^X$ and the equality $\mathcal U=\mathcal U_{\Pi}|_{\imath(G)}$ the last statement in (2) follows.
 
(3) The restriction $p$ of the projection of ${\rm pr}:(\beta_G X)^{\beta_G X}\to (\beta_G X)^X$ (which is uniformly continuous) onto $G=\jmath (G)$ is a bijection of $\jmath (G)$ onto $\imath(G)$ where  $\imath: G\to \beta_G X$, $\imath (g)=(g(x))_{x\in X}$ (since elements of $G$ are contionuous). Thus, after identification of $\imath (G)$ and $\jmath (G)$ , it follows that $\mathcal U\subset\tilde{\mathcal U}$.

From the condition $(\star)$ it follows that for any point $x\in\beta_G X\setminus X$ and any $U\in\tilde{\mathcal U}_X$ there exist points $x_1,\ldots, x_n\in X$ and  $V\in\mathcal U_X$ (note that $g(x_k)\in X$, $k=1,\ldots, n$, and the restriction of $\tilde{\mathcal U}_X$ onto $X$ is $\mathcal U_X$) such that the covering 
$$\{\{h\in G\ |\ (g(x_k), h(x_k))\in V,\ k=1,\ldots, n\}\ |\ g\in G\}\in\mathcal U_X$$
is refined in
$$\{\{h\in G\ |\ (g(x), h(x))\in U\}\ \ g\in G\}\in\tilde{\mathcal U}_X.$$
Hence, $p$ is a uniform equivalence and $\mathcal U=\tilde{\mathcal U}$.

$L\wedge R=\mathcal U=\tilde{\mathcal U}$.  Since,  $(\beta_G X)^{\beta_G X}$ is compact, the closure of $\jmath(G)=G$ in $(\beta_G X)^{\beta_G X}$ (completion with respect to $\tilde{\mathcal U}$) is the Roelcke compactification of $(G, \tau_{\partial})$ and is the enveloping Ellis semigroup of $G$.
\end{proof}

\begin{cor}\label{Roelcke precomp4-2}
If the action of the group $G$ on the discrete space $X$ is ultratransitive, then 

{\rm (1)}  the uniformity $\mathcal U_X$ is totally bounded; 

{\rm (2)} the completion $\tilde X^{\mathcal U_X}$ is the one-point Alexandroff compactification $\alpha X$; 

{\rm (3)} $L\wedge R=\mathcal U$; 

{\rm (4)}  the group $G$ is Roelcke precompact; 

{\rm (5)}  the Roelcke compactification of $G$ is  the enveloping Ellis semigroup {\rm (}the closure of $\jmath(G)=G$ in $(\alpha X)^{\alpha X}${\rm )} and  coincides with the Roelcke compactification of ${\rm S} (X)$.
\end{cor}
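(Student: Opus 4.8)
The plan is to establish the five claims in order, feeding each into the next and leaning on Theorem~\ref{Roelcke precomp4-1} for the structural assertions (3) and (5). I would begin with an orbit computation that underpins everything else. Given distinct $x_1,\dots,x_n$, strong $(n+1)$-transitivity forces ${\rm St}_{x_1,\dots,x_n}$ to act transitively on $X\setminus\{x_1,\dots,x_n\}$: for $a,b$ in that set the tuples $(x_1,\dots,x_n,a)$ and $(x_1,\dots,x_n,b)$ have the same length $n+1$ and distinct entries, so some $g\in G$ fixes each $x_i$ and sends $a$ to $b$. Thus the orbits of ${\rm St}_{x_1,\dots,x_n}$ are exactly the singletons $\{x_1\},\dots,\{x_n\}$ and the cofinite tail $X\setminus\{x_1,\dots,x_n\}$. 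Consequently every basic covering $\{{\rm St}_{x_1,\dots,x_n}x\ |\ x\in X\}$ of $\mathcal U_X$ is finite, which is (1); and since each such covering consists of isolated points together with a single cofinite member, the only non-principal Cauchy filter is the filter of cofinite sets, so the completion adjoins exactly one point at infinity, giving $\tilde X^{\mathcal U_X}=\alpha X$, which is (2).

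For (3) I would apply Theorem~\ref{Roelcke precomp4-1}\,(1) and so reduce to verifying its hypothesis: whenever $h(x_k)\in{\rm St}_{x_1,\dots,x_n}g(x_k)$ for all $k$, I must produce $f\in{\rm St}_{x_1,\dots,x_n}$ and $g'\in g\,{\rm St}_{x_1,\dots,x_n}$ with $h=f\circ g'$, i.e.\ show $h$ lies in the double coset ${\rm St}_{x_1,\dots,x_n}\,g\,{\rm St}_{x_1,\dots,x_n}$. The idea is to build $g'\in G$ realizing the finite partial assignment $x_k\mapsto g(x_k)$ and $h^{-1}(x_i)\mapsto x_i$; then $g'\in g\,{\rm St}_{x_1,\dots,x_n}$ from the first conditions, while $f:=h\circ(g')^{-1}$ fixes each $x_i$ from the second, so $h=f\circ g'$ as required. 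The crux is to check this partial assignment is a well-defined injection. Using the orbit description above, $h(x_k)\in{\rm St}_{x_1,\dots,x_n}g(x_k)$ forces the equivalence $g(x_k)=x_i\Longleftrightarrow h(x_k)=x_i$ (both say the relevant orbit is the singleton $\{x_i\}$), and this is precisely what rules out every potential clash between the two sets of constraints. Once consistency and injectivity are in hand, ultratransitivity supplies the required $g'\in G$, and the hypothesis of Theorem~\ref{Roelcke precomp4-1}\,(1) holds, yielding $L\wedge R=\mathcal U$.

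Claim (4) is then immediate: by (1) the equiuniformity $\mathcal U_X$ is totally bounded, so Theorem~\ref{Roelcke precomp4-1}\,(2) (equivalently Corollary~\ref{Roelcke precomp2-2} via Proposition~\ref{coin}) gives that $G$ is Roelcke precompact with Roelcke compactification the closure of $\imath(G)$ in $(\beta_G X)^X=(\alpha X)^X$. For (5) I would invoke Theorem~\ref{Roelcke precomp4-1}\,(3) and verify condition $(\star)$ at the unique point $\infty=\beta_G X\setminus X$ furnished by (2). The key observation is that every $g\in G$ extends to $\alpha X$ fixing $\infty$, so $(g(\infty),h(\infty))=(\infty,\infty)$ lies on the diagonal and hence in every entourage $U$; thus the right-hand set in $(\star)$ is all of $G$ and the inclusion is trivial for any choice of $x_1$ and $V$. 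This yields $\mathcal U=\tilde{\mathcal U}$ and identifies the Roelcke compactification of $(G,\tau_{\partial})$ with the enveloping Ellis semigroup, the closure of $\jmath(G)=G$ in $(\alpha X)^{\alpha X}$. Coincidence with the Roelcke compactification of ${\rm S}(X)$ then follows because $G$ is dense in $({\rm S}(X),\tau_{\partial})$ by Facts~2\,(1), so Facts~1\,(1) identifies the two compactifications.

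I expect the main obstacle to be step (3): the remaining claims are either a routine orbit computation or a direct application of Theorem~\ref{Roelcke precomp4-1}, whereas the double-coset hypothesis requires the careful consistency-and-injectivity check for the finite partial map defining $g'$, together with correctly reading off the strong-transitivity parameter needed to realize that map inside $G$.
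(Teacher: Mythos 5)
Your proposal is correct and takes essentially the same approach as the paper: the orbit computation for (1)--(2), verification of the double-coset hypothesis of Theorem~\ref{Roelcke precomp4-1}(1) for (3), total boundedness plus Theorem~\ref{Roelcke precomp4-1}(2)--(3) and density of $G$ in $({\rm S}(X),\tau_{\partial})$ for (4)--(5). The only cosmetic differences are that in (3) you construct the right factor $g'$ (realizing $x_k\mapsto g(x_k)$, $h^{-1}(x_i)\mapsto x_i$) where the paper constructs the left factor $f$ via a case analysis on how many of the $g(x_k)$ lie in $\{x_1,\dots,x_n\}$ --- a mirror image of the same argument resting on the same observation $g(x_k)=x_i\Longleftrightarrow h(x_k)=x_i$ --- and in (5) you explicitly carry out the check of condition $(\star)$ (every $g\in G$ fixes the point at infinity) that the paper dismisses as easily verified.
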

 
\begin{proof} (1) Due to the ultratransitivity of the action, the base of the equiuniformity $\mathcal U_X$ is formed by the disjoint coverings  
$$\{{\rm St}_{x_1, \dots, x_n}x\ |\ x\in X\}=\{\{x_1\},\ldots, \{x_n\}, X\setminus\{x_1,\ldots, x_n\}\},$$
$x_1,\dots, x_n\in X$, of  $n+1$ sets, $n\in\mathbb N$, and $\mathcal U_X$ is a totally bounded equiuniformity. 
 
(2) As a base for the uniformity on the compactum $\alpha X$, one can choose coverings 
$$\{\{x_1\},\ldots, \{x_n\}, \alpha X\setminus\{x_1,\ldots, x_n\}\},$$
$x_1, \dots, x_n\in X$, $n\in\mathbb N$. $\{\{x_1\},\ldots, \{x_n\}, \alpha X\setminus\{x_1,\ldots, x_n\}\}\wedge X$ is the base of the uniformity $\mathcal U_X$. Hence $\alpha X=\tilde X^{\mathcal U_X}$.
 
To prove (3), we check if the condition of point (1) of Theorem \ref{Roelcke precomp4-1} is met. 
 
For points $x_1,\ldots, x_n\in X$, $n\in\mathbb N$, and any $g, h\in G$ such that $h(x_k)\in {\rm St}_{x_1, \dots, x_n}g(x_k)$, $k=1,\ldots, n$, we consider, without  loss of generality, the following. 
 
If there exists a point $x_k$ such that $g(x_k)=x_{m(k)}$, $m(k)\leq n$, then let $x_1,\ldots, x_p$, $p\leq n$, be all points for which $g(x_k)=x_{m(k)}$, $m(k)\leq n$. Then $h(x_k)=x_{m(k)}$, $k=1,\ldots, p$. If $p\ne n$, then $g(x_{p+l})\not\in\{x_1,\ldots, x_n\}$,  $h(x_{p+l})\not\in\{x_1,\ldots, x_n\}$, $l=1,\ldots, n-p$.
 
If $p=n$, then it follows from the ultratransitivity of the action that there exists $f\in G$ such that the points $x_1, \dots, x_n$ are mapped respectively to the points $x_1, \dots, x_n$.
 
If $0<p<n$, then it follows from the ultratransitivity of the action that there exists $f\in G$ such that the points $x_1, \dots, x_n, g(x_{p+1}),\ldots, g(x_n)$ are mapped respectively to the points $x_1, \dots, x_n, h(x_{p+1}),\ldots, h(x_n)$. 
 
If $p=0$, then it follows from the ultratransitivity of the action that there exists $f\in G$ such that the points $x_1, \dots, x_n, g(x_{1}),\ldots, g(x_n)$ are mapped respectively to the points $x_1, \dots, x_n, h(x_{1}),\ldots, h(x_n)$. 
 
In all cases
$$f\in {\rm St}_{x_1, \dots, x_n}.$$ At the same time
$$h(x_k)=f(g(x_k)), k=1,\ldots, n.$$
Hence $h^{-1}f g\in  {\rm St}_{x_1, \dots, x_n}$ $\Longrightarrow$ $f g\in h {\rm St}_{x_1, \dots, x_n}$  $\Longrightarrow$ $h\in f g {\rm St}_{x_1, \dots, x_n}$ and there is $g'\in g{\rm St}_{x_1, \dots, x_n}$ such that $h=f g'$. By Theorem 2.1 $L\wedge R=\mathcal U$. 
 
(1), (3) and p. (2) of Theorem \ref{Roelcke precomp3-3} imply  (4). (2), (3), p. (3) of Theorem \ref{Roelcke precomp4-1} (which can be easily verified) and Fact 2 (3) imply (5).
\end{proof}

{\bf II.} Let $X$ be an ultrahomogeneous chain, $\aut(X)$ is its automorphism group. 

$\mathcal U_X^{p}$is the maximal equiuniformity for the action $(\aut(X), \tau_{p})\curvearrowright (X, \tau)$,   
$\beta_{p}X=m (X, \tau)$ is the maximal $G$-compactification of $X$ by Lemma \ref{comp}(1) (completion of $X$ with respect to the uniformity $\mathcal U_X^{p}$ \cite[Theorem 3]{ChK2}), $\mathcal U^{p}$ is a totally bounded equiuniformity on $G\subset X^X$, constructed in \S\ref{Ellis_constr}. 

$\mathcal U_X^{\partial}$ is the maximal equiuniformity for the action $(\aut(X), \tau_{\partial})\curvearrowright (X, \tau_d)$,  
$\beta_{\partial}X=m (X, \tau_d)$ is the maximal $G$-compactification of $X$ by Lemma \ref{comp}(2)  (completion of $X$ with respect to the uniformity $\mathcal U_X^{\partial}$ \cite[Theorem 3]{ChK2}),
$\mathcal U^{\partial}$ is a totally bounded equiuniformity on $G\subset X^X$, constructed in \S\ref{Ellis_constr}. 

$R_{\mathcal K}$ is a uniformity on $\aut(X)$, constructed from a family of small subgroups $\mathcal K=\{{\rm St}_{x_1, \dots, x_n}\ |\  x_1,\dots, x_n\in X\}$.

\begin{cor}\label{Roelcke precomp4-3} 
Let $X$ be an ultrahomogeneous chain. Then 

{\rm(1.1)} for the group $(\aut(X), \tau_{\partial})$ one has $L\wedge R=\mathcal U^{\partial}$, the Roelcke compactification of $G=(\aut(X), \tau_{\partial})$ is the closure of $\imath(G)=G$ in $(\beta_{\partial}X)^{X}$,

{\rm(1.2)} for the group $(\aut(X), \tau_p)$  one has $R_{\mathcal K}=\mathcal U^p$, the completion of  $G=(\aut(X), \tau_p)$ with respect to the  uniformity $R_{\mathcal K}$ is the closure of $\imath(G)=G$ in $(\beta_pX)^{X}$.

Let $X$ be a continuous ultrahomogeneous chain. Then 

{\rm(2.1)} the Roelcke compactification of $G=(\aut(X), \tau_{\partial})$ is the enveloping Ellis semigroup {\rm (}the closure of $\jmath(G)=G$ in $(\beta_{\partial}X)^{\beta_{\partial} X}${\rm )},

{\rm(2.2)} the completion of the group $G=(\aut(X), \tau_p)$ with respect to the  uniformity $R_{\mathcal K}$ is the enveloping Ellis semigroup {\rm (}the closure of $\jmath(G)=G$ in $(\beta_pX)^{\beta_p X}${\rm )}.
\end{cor}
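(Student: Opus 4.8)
The plan is to derive each of the four assertions from the general machinery of \S\ref{Ellis_constr} together with two structural inputs on ultrahomogeneous chains: order-ultrahomogeneity (Facts 4(2)) and the explicit bases of the maximal equiuniformities and of the compactifications $\beta_\partial X=m(X,\tau_d)$, $\beta_pX=m(X,\tau)$ from Lemma \ref{comp}. Points (1.1) and (1.2) amount to checking the hypotheses of Theorems \ref{Roelcke precomp4-1} and \ref{Roelcke precomp2-1} respectively; points (2.1) and (2.2) then add the continuity hypothesis, whose sole role is to pin down $\beta_GX\setminus X$ so that condition $(\star)$ can be verified.

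For (1.1) I would regard $X$ as the discrete space $(X,\tau_d)$, so that $\aut(X)\subset({\rm S}(X),\tau_\partial)$ and $\mathcal U_X=\mathcal U_X^{\partial}$, and verify the hypothesis of Theorem \ref{Roelcke precomp4-1}(1). By order-ultrahomogeneity the orbits of ${\rm St}_{x_1,\dots,x_n}$ (with $x_1<\dots<x_n$) are exactly the blocks of the partition $(\gets,x_1),\{x_1\},(x_1,x_2),\dots,\{x_n\},(x_n,\to)$, so the assumption $h(x_k)\in{\rm St}_{x_1,\dots,x_n}g(x_k)$ for all $k$ says precisely that $g(x_k)$ and $h(x_k)$ lie in the same block. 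Since $g,h$ preserve order, the tuples $(g(x_k))_k$ and $(h(x_k))_k$ are strictly increasing, and the points $x_1<\dots<x_n$ interleaved with the $g(x_k)$, respectively with the $h(x_k)$, form two strictly increasing finite sets of the same labelled order type; order-ultrahomogeneity then yields $f\in{\rm St}_{x_1,\dots,x_n}$ with $f(g(x_k))=h(x_k)$, whence $g':=g\cdot(g^{-1}f^{-1}h)=f^{-1}h\in g{\rm St}_{x_1,\dots,x_n}$ and $h=f\circ g'$. This is the condition of Theorem \ref{Roelcke precomp4-1}(1), so $L\wedge R=\mathcal U^{\partial}$; as the base coverings of $\mathcal U_X^{\partial}$ are finite, $\mathcal U_X^{\partial}$ is totally bounded and Theorem \ref{Roelcke precomp4-1}(2) identifies the Roelcke compactification with the closure of $\imath(G)=G$ in $(\beta_\partial X)^X$.

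For (1.2) I would apply Theorem \ref{Roelcke precomp2-1} to $(\aut(X),\tau_p)\curvearrowright(X,\tau)$ with $\mathcal U_X=\mathcal U_X^{p}$; its part (1) already gives $\mathcal U^{p}\subset R_{\mathcal K}$, so only the hypothesis of part (2) remains. Given $x_1,\dots,x_n$ and an entourage ${\rm U}\in\mathcal U_X^{p}$ coming from a bracketing $y_1<\dots<y_{2m}$, I would choose ${\rm V}$ associated to a finer bracketing that separates the $x_k$ and refines ${\rm U}$ around each of them; then $(g(x_k),h(x_k))\in{\rm V}$ forces $g(x_k),h(x_k)$ into matching small intervals, and order-ultrahomogeneity produces $g'\in g{\rm St}_{x_1,\dots,x_n}$ with $h\in O_{x_1,\dots,x_n;{\rm U}}\,g'$. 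I expect this interval-bookkeeping — matching the nested bracket intervals of the maximal equiuniformity of the \emph{order} topology against the two-sided condition defining $O_{x_1,\dots,x_n;{\rm U}}$ and the double-coset structure — to be the main obstacle, since it is here, and not in the discrete case, that the order topology genuinely enters. Finiteness of the base coverings of $\mathcal U_X^{p}$ again gives total boundedness, so by the construction of \S\ref{Ellis_constr} the $R_{\mathcal K}=\mathcal U^{p}$-completion is the closure of $\imath(G)=G$ in $(\beta_pX)^X$.

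Finally, for (2.1) and (2.2) I would use continuity to describe $\beta_GX\setminus X$. Absence of proper gaps leaves in $\beta_\partial X=m\big((X,\tau_d)\otimes_\ell\{-1,0,1\}\big)$ only the doubled points $(x_0,\pm1)$ and the two endpoints, and in $\beta_pX=m(X,\tau)$ only the two endpoints. Each endpoint is fixed by every extended automorphism, and the image of a doubled point is $\bar g(x_0,\pm1)=(g(x_0),\pm1)$, hence governed by the single coordinate $x_0$. Thus condition $(\star)$ of Theorem \ref{Roelcke precomp4-1}(3) holds for (2.1): for an endpoint it is trivial (the extra coordinate is constant), while for $x=(x_0,\pm1)$ one takes $x_1=x_0$ and ${\rm V}$ a clopen partition refining the trace of ${\rm U}$ on $X$ and separating at the base points of ${\rm U}$, so that closeness of $g(x_0),h(x_0)$ forces closeness of their level-$\pm1$ copies; this gives $\mathcal U^{\partial}=\tilde{\mathcal U}^{\partial}$ and exhibits the Roelcke compactification as the enveloping Ellis semigroup, the closure of $\jmath(G)=G$ in $(\beta_\partial X)^{\beta_\partial X}$. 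Point (2.2) is the $\tau_p$-analogue and is even simpler: since $\beta_pX\setminus X$ consists of the two fixed endpoints, the projection $(\beta_pX)^{\beta_pX}\to(\beta_pX)^X$ restricts to a uniform equivalence on $\jmath(G)=G$, so the $R_{\mathcal K}=\mathcal U^{p}$-completion coincides with the closure in $(\beta_pX)^{\beta_pX}$. Continuity is essential precisely here, for a proper gap would contribute a compactification point whose image under $\bar g$ depends on the behaviour of $g$ near the gap and so cannot be controlled by finitely many coordinates, breaking $(\star)$.
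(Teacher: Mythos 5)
Your treatments of (1.1), (2.1) and (2.2) are essentially the paper's own arguments: for (1.1) the paper verifies condition (1) of Theorem \ref{Roelcke precomp4-1} by exactly your interleaving device (its ordered sets $P_g$, $P_h$ are your two finite sets of the same labelled order type), and for (2.1)--(2.2) it verifies $(\star)$ at the doubled points $(x,\pm 1)$ through the base coordinate $(x,0)$ and notes that at $\inf$ and $\sup$ the condition is evident, just as you do.

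The gap is (1.2), and it is precisely the step you set aside as ``the main obstacle''. Verifying condition (2) of Theorem \ref{Roelcke precomp2-1} is not a matter of bookkeeping at the points $x_1,\dots,x_n$ alone: the required conclusion $h\in O_{x_1,\dots,x_n;{\rm U}}\,g'$ unwinds to $(x_k,\,h(g')^{-1}(x_k))\in{\rm U}$ and $(x_k,\,g'h^{-1}(x_k))\in{\rm U}$, i.e.\ it constrains $g'$ at the preimage points $(g')^{-1}(x_k)$ and $h^{-1}(x_k)$. These are not among the $x_k$, are not controlled by any finer bracketing around the $x_k$, and in the first case depend circularly on the $g'$ being constructed; so producing, by ultrahomogeneity, some $g'\in g{\rm St}_{x_1,\dots,x_n}$ whose values at the $x_k$ land in matching small intervals does not yield the double-coset membership. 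The paper resolves this by building $g'\in g{\rm St}_{x_1,\dots,x_n}$ that is $2V$-close to $h$ at \emph{every} point of $X$ (then both conditions follow by evaluating at the preimage points): one takes $V$ with $2V\subset U$, splits $X$ into $(\gets,x_1],[x_1,x_2],\dots,[x_n,\to)$, and on each piece glues $h$ itself, used on a large middle part $[x_k^+,x_{k+1}^-]$ with $x_k^+=h^{-1}(b_k)$, $x_{k+1}^-=h^{-1}(a_{k+1})$, to interpolating automorphisms $\varphi_k^{\pm}$ supplied by ultratransitivity that bridge between the prescribed values $g(x_k)$ and the values of $h$, with a case analysis according to the relative position of $g(x_k)$ and $h(x_k)$, and checks the pieces fit monotonically. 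This global piecewise approximation of $h$ inside the coset $g{\rm St}_{x_1,\dots,x_n}$ is the real content of (1.2); without it the equality $R_{\mathcal K}=\mathcal U^p$ is not established, and (2.2), which builds on it, is left hanging as well.
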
 
 
\begin{proof}
(1.1) By Theorem \ref{Roelcke precomp4-1}, at first we establish the equality $L\wedge R=\mathcal U^{\partial}$, i.e. check if the condition (1) of Theorem \ref{Roelcke precomp4-1} is met.

Let $x_1<\ldots<x_n\in X$, $n\in\mathbb N$, and $g, h\in G$ be such that $h(x_k)\in {\rm St}_{x_1, \dots, x_n}g(x_k)$, $k=1,\ldots, n$. The covering $\{\st_{x_1,\ldots, x_n}x\ |\ x\in X\}$ has the form
$$(\gets, x_1)\cup\{x_1\}\cup(x_1, x_2)\cup\{x_2\}\cup\dots\cup(x_{n-1}, x_n)\cup\{x_n\}\cup(x_n, \to), x_1<\ldots <x_n,$$
and for any $k=1,\ldots, n$ the points $h(x_k), g(x_k)$ belong to one of its disjoint elements. Since 
$$g(x_1)<\ldots<g(x_n)~ \mbox{and}~ h(x_1)<\ldots<h(x_n),$$
then arranging in order the points $x_1<\ldots<x_n$ and $g(x_1)<\ldots<g(x_n)$, $x_1<\ldots<x_n$ and $h(x_1)<\ldots<h(x_n)$,  we will get ordered sets of points $P_g$ and $P_h$ for which

first, $x_j\leq g(x_k)\leq x_{j+1}\Longleftrightarrow x_j\leq h(x_k)\leq x_{j+1}$, $k=1,\ldots, n$, and

second, $x_j< g(x_k)<g(x_m)<x_{j+1}\Longleftrightarrow x_j<h(x_k)<h(x_m)<x_{j+1}$, $k<m$, $k, m=1,\ldots, n$.

The ultratransitivity of the action implies the existence of $f\in\aut(X)$ such that $f(P_g)=P_h$. Then $f\in\St_{x_1, \dots, x_n}$ and $h=f\circ g$.

The results of \S\ref{M-G-comp} and item (2) of Theorem~\ref{Roelcke precomp4-1} finishes the proof.

In order to prove (2.1) it is enough to check the fulfillment of the condition $(\star)$ of Theorem~\ref{Roelcke precomp4-1}. Let, for example, $(x, 1)\in  m(X, \tau_d)=\beta_{\partial} X$ and an arbitrary covering of  $\{(x, 1)\}\times m(X, \tau_d)$  be of the form 
$$\Omega=(\gets, (x_1, 0))\bigcup\{(x_1, 0)\}\bigcup ((x_1, 0), (x_2, 0))\bigcup\{(x_2, 0)\}\bigcup\ldots$$
$$\ldots\bigcup((x_{n-1}, 0), (x_n, 0))\bigcup\{(x_n, 0)\}\bigcup ((x_n, 0), \to),$$
where $x_1<x_2<\ldots<x_{n-1}<x_n$. It consists of disjoint sets. 
Let us note that if $(y, 0)\in (\gets, (x_1, 0))$ or $((x_k, 0), (x_{k+1}, 0))$, $k=1,\ldots, n-1$, or $ ((x_n, 0), \to)$, then  $(y, -1), (y, 1)\in (\gets, (x_1, 0))$ or $ ((x_k, 0), (x_{k+1}, 0))$, $k=1,\ldots, n-1$, or $ ((x_n, 0), \to)$ also. 

Take  $(x, 0)\in m(X, \tau_d)$ and the same covering $\Omega$ on  $\{(x, 0)\}\times m(X, \tau_d)$. 

If $f(x, 0)=g(x, 0)$, then  $f(x, 1)=g(x, 1)$.  In particular, if $(x_k, 1)$ is defined and  $f(x, 0)=g(x, 0)=(x_k, 0)$, then $f(x, 1)=g(x, 1)=(x_k, 1)\in ((x_k, 0), (x_{k+1}, 0))$, $k=1,\ldots, n$.

If $f(x, 0)=(y, 0), g(x, 0)=(z, 0)\in (\gets, (x_1, 0))$ or $ ((x_k, 0), (x_{k+1}, 0))$, $k=1,\ldots, n-1$, or $ ((x_n, 0), \to)$, then $f(x, 1)=(y, 1)$, $g(x, 1)=(z, 1)$. Pair of points $(y, 0), (y, 1)$ (and $(z, 0), (z, 1)$) is in one element of the covering $\Omega$, pair of points  $(y, 0), (z, 0)$ is in one element of the covering $\Omega$. Hence,   pair of points $f(x, 1)=(y, 1)$, $g(x, 1)=(z, 1)$ is in one element of the covering $\Omega$. Therefore the condition  $(\star)$ of Theorem~\ref{Roelcke precomp4-1} is valid. 

In the case when $(x, -1)\in m(X, \tau_d)$ the same resoning can be used. In the points $\{\inf\}$ and $\{\sup\}\in m(X, \tau_d)$  the fulfillment of the condition $(\star)$ of Theorem~\ref{Roelcke precomp4-1} is evident. 

(1.2) In order to establish the equality $R_{\mathcal K}=\mathcal U^p$ it is enough to check if the condition (2) of Theorem {\rm\ref{Roelcke precomp2-1}} is met. 

For the diagonal entourage $U\in\mathcal U_X^{p}$ let the diagonal entourage $V\in\mathcal U_X^{p}$ be such that $2V\subset U$; $x_1<\ldots<x_n\in X$; $g, h\in\aut(X)$ such that $(g(x_k), h(x_k))\in V$, $k=1,\ldots, n$. Let us construct an automorphism $g'\in g{\rm St}_{x_1, \dots, x_n}$ such that $(h(x), g'(x))\in 2V\subset U$ for any point $x\in X$ (then for any $x\in X$ $(h((g')^{-1}(x)), g'((g')^{-1}(x)))=(h(g')^{-1}(x), x)\in U$ and from the definition of the topology of pointwise convergence, the automorphism $h(g')^{-1}$ belongs to the neighbourhood of the unit of the group $(\aut(X), \tau_p)$, which determines the entourage of the diagonal $U$, i.e.  the condition (2) of Theorem {\rm\ref{Roelcke precomp2-1}} is met). The construction of the automorphism $g'$ is carried out on the intervals $$(\gets, x_1], [x_1, x_2],\ldots, [x_{n-1}, x_n], [x_n, \to).$$ 

(i) On the interval $(\gets, x_1]$ in the case of $h(x_1)\geq g(x_1)$. If $(\gets , h(x_1))\in V$, then $g'(x)=g(x)$ for $x\in (\gets, x_1]$. 

Otherwise, let $a_1<g(x_1)$, $(a_1, g(x_1))\in V$. We assume $x_1^-=h^{-1}(a_1)$. From the ultratransitivity of the action, there exists $\varphi_1\in\aut(X)$ such that $\varphi_1(x_1^-)=a_1$, $\varphi_1(x_1)=g(x_1)$. Let us put
$$g'(x)=\left\{\begin{array}{lcl}
h(x) & \mbox{if} & x\in (\gets, x_1^-], \\
\varphi_1(x) & \mbox{if} & x\in [x_1^-, x_1].\\
\end{array}\right.$$ 
In the case of $h(x_1)\leq g(x_1)$.  If $(\gets , g(x_1))\in V$, then $g'(x)=g(x)$ for $x\in (\gets, x_1]$. 

Otherwise, let $a_1<h(x_1)$, $(a_1, h(x_1))\in V$.  We assume $x_1^-=h^{-1}(a_1)$. From the ultratransitivity of the action, there exists $\varphi_1\in\aut(X)$ such that $\varphi_1(x_1^-)=a_1$, $\varphi_1(x_1)=g(x_1)$. Let us put
$$g'(x)=\left\{\begin{array}{lcl}
h(x) & \mbox{if} & x\in (\gets, x_1^-], \\
\varphi_1(x) & \mbox{if} & x\in [x_1^-, x_1].\\
\end{array}\right.$$ 

On the interval $[x_n, \to)$ the construction is similar. 

(ii) On the interval $[x_k, x_{k+1}]$, $k=1, \dots, n-1$. The case of $h(x_k)\geq g(k_k)$,  $h(k_{k+1})\geq g(x_{k+1})$. If  $(g(x_k), h(x_{k+1}))\in V$, then 
$g'(x)=g(x)$ for $x\in [x_k, x_{k+1}]$.

Otherwise, let $b_k>h(x_k)$, $(b_k, g(x_k))\in V$, $a_{k+1}<g(x_{k+1})$, $(a_{k+1}, h(x_{k+1}))\in V$ ($b_k<a_{k+1}$). We assume $x_k^+=h^{-1}(b_k)$, $x_{k+1}^-=h^{-1}(a_{k+1})$. From the ultratransitivity of the action, there exists $\varphi_k^-\in\aut(X)$ such that $\varphi_k^-(x_k)=g(x_k)$, $\varphi_k^-(x_k^+)=b_k$, there exists $\varphi_k^+\in\aut(X)$ such that $\varphi_k^+(x_{k+1}^-)=a_{k+1}$, $\varphi_k^+(x_k)=g(x_{k+1})$. Let us put
$$g'(x)=\left\{\begin{array}{lcl}
\varphi_k^- & \mbox{if} & x\in [x_k, x_k^+], \\
h(x) & \mbox{if} & x\in [x_k^+, x_{k+1}^-], \\
\varphi_k^+ & \mbox{if} & x\in [x_{k+1}^-, x_{k+1}].\\
\end{array}\right.$$ 
In the case of $h(x_k)\leq g(x_k)$,  $h(x_{k+1})\leq g(x_{k+1})$  the construction is similar. 

In the case of $h(x_k)\geq g(x_k)$,  $h(x_{k+1})\leq g(x_{k+1})$.   If  $(g(x_k), g(x_{k+1}))\in V$, then $g'(x)=g(x)$ for $x\in [x_k, x_{k+1}]$.

Otherwise, let $b_k>h(x_k)$, $(b_k, g(x_k))\in V$, $a_{k+1}<h(x_{k+1})$, $(a_{k+1}, g(x_{k+1}))\in V$ ($b_k<a_{k+1}$). We assume $x_k^+=h^{-1}(b_k)$, $x_{k+1}^-=h^{-1}(a_{k+1})$. From the ultratransitivity of the action, there exists $\varphi_k^-\in\aut X$ such that $\varphi_k^-(x_k)=g(x_k)$, $\varphi_k^-(x_k^+)=b_k$, there exists $\varphi_k^+\in\aut(X)$ such that $\varphi_k^+(x_{k+1}^-)=a_{k+1}$, $\varphi_k^+(x_k)=g(x_{k+1})$. Let us put
$$g'(x)=\left\{\begin{array}{lcl}
\varphi_k^- & \mbox{if} & x\in [x_k, x_k^+], \\
h(x) & \mbox{if} & x\in [x_k^+, x_{k+1}^-], \\
\varphi_k^+ & \mbox{if} & x\in [x_{k+1}^-, x_{k+1}].\\
\end{array}\right.$$ 
In the case of $h(x_k)\leq g(x_k)$, $h(x_{k+1})\geq g(x_{k+1})$ the construction is similar. It follows from the construction that $(h(x), g'(x))\in 2V\subset U$ for any point $x\in X$.

To proof  (2.2) we note that in the points $\{\inf\}$ and $\{\sup\}\in m(X, \tau_d)$ the condition $(\star)$ of Theorem~\ref{Roelcke precomp4-1} is evidently fulfilled. 
\end{proof}

\begin{rem} {\rm (1) Since $L\wedge R\subset R_{\mathcal K}$, then under the conditions of {\rm Corollary \ref{Roelcke precomp4-3}} (2) the Roelcke compactification of $G=(\aut(X), \tau_p)$ is the image of completion (compactification) of the group $G=(\aut(X), \tau_p)$ with respect to the  uniformity $R_{\mathcal K}$ when mapping compactifications. 

(2) In Corollary \ref{Roelcke precomp4-3} the automorphism group of an ultrahomogeneous simple set can be replaced by any of its subgroups acting in an ultratransitive way. In particular, this is due to the fact that such subgroups are dense subgroups of the automorphism group in the permutation topology.} 
\end{rem}

\section{The Roelcke precompactness of automorphism groups of homogeneous chains different from simple ones}
Let $X$ be a homogeneous chain, $J$ is a proper regular interval. From \cite[Theorem 7]{Ohkuma2} it follows that $J$ is an open interval $(X, \tau)$, the complement to which are disjoint open intervals $J^-=\{t\in X\ |\ t<x, \forall x\in J\}$ and $J^+=\{t\in X\ |\ t>x, \forall x\in J\}$ (not necessarily homogeneous). 

By Fact 4(3) $J$ is a homogeneous chain and the group $\aut(J)$ (any automorphism $J$ extends to an automorphism of $X$ coinciding with the identical mapping on $X\setminus J$) is called a characteristic group of $J$ \cite[p. (3.1)]{Ohkuma2}.

Assume $H=\{g\in\aut(X) | \forall x\in J, g(x)\in J\}$. 

\begin{pro}\label{Roelcke precomp5-1}

{\rm (1)} $H$ is an open subgroup of $(\aut(X), \tau_p)$ {\rm(}and hence $(\aut(X), \tau_{\partial})${\rm)}.

{\rm (2)} 
$(H, \tau_p)=(\aut(J^-), \tau_p)\times (\aut(J), \tau_p)\times  (\aut(J^+), \tau_p),$

\quad $(H, \tau_{\partial})=(\aut(J^-), \tau_{\partial})\times (\aut(J), \tau_{\partial})\times (\aut(J^+), \tau_{\partial}),$
 
{\rm (3)} for any $x\in J$ $$\st_x {\footnotesize(\mbox{under the action}~\aut(X)\curvearrowright X)} =\aut(J^-)\times\st_x {\footnotesize(\mbox{under the action}~\aut(J)\curvearrowright J)} \times \aut(J^+).$$
\end{pro}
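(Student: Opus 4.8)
The plan is to identify $H$ with the setwise stabiliser of $J$ and then to split automorphisms of $X=J^{-}\lozenge J\lozenge J^{+}$ into their restrictions to the three order-convex blocks. First I would show $H=\{g\in\aut(X):g(J)=J\}$. By definition $g\in H$ means only $g(J)\subseteq J$; but since $J$ is a proper regular interval, Fact~4(3) supplies the lexicographic decomposition $X=Y\otimes_{\ell}J$ whose blocks $\{y\}\times J$ are permuted by $\aut(X)$, so a $g$ sending the whole block $J$ into itself must send it onto $J$. Hence $g(J)\subseteq J\Longleftrightarrow g(J)=J$, and $H$ is closed under products and inverses, i.e.\ a subgroup. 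For openness, recall $J$ is $\tau$-open in $X$; fixing $x_{0}\in J$, the set $[x_{0},J]=\{g:g(x_{0})\in J\}$ is a $\tau_{p}$-neighbourhood of $e$, and any $g$ in it has $g(x_{0})\in J$, whence by regularity $g(J)\subseteq J$ and $g\in H$. Thus $[x_{0},J]\subseteq H$, so $H$ is $\tau_{p}$-open, and since $\tau_{\partial}\geq\tau_{p}$ by Proposition~\ref{perm_least_adm_top}, $H$ is $\tau_{\partial}$-open as well. This gives (1).

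For the algebraic content of (2), I would note that each $g\in H$ preserves all three blocks: from $g(J)=J$ and order-preservation, $t<x$ for all $x\in J$ forces $g(t)<y$ for all $y\in J$, so $g(J^{-})\subseteq J^{-}$; applying this to $g^{-1}\in H$ gives $g(J^{-})=J^{-}$, and symmetrically $g(J^{+})=J^{+}$. Restriction then defines a homomorphism $\Phi\colon H\to\aut(J^{-})\times\aut(J)\times\aut(J^{+})$, $\Phi(g)=(g|_{J^{-}},g|_{J},g|_{J^{+}})$. It is injective because $X=J^{-}\cup J\cup J^{+}$, and surjective because, given $(\alpha,\beta,\gamma)$, the map agreeing with $\alpha,\beta,\gamma$ on the respective blocks is an order-automorphism of $X=J^{-}\lozenge J\lozenge J^{+}$ lying in $H$ (order-preservation across blocks is automatic since $J^{-}<J<J^{+}$). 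So $\Phi$ is a group isomorphism.

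The topological half of (2), together with (3), I would handle as follows. Since $J^{-},J,J^{+}$ are convex in $X$, the subspace topology each inherits from $(X,\tau)$ coincides with its own order topology, because for a convex subset the traces of the rays $(\gets,a)$, $(a,\to)$ with $a$ outside the set are empty or everything. Consequently a subbasic $\tau_{p}$-set $[x,O]\cap H$ is governed by exactly the one factor determined by the block containing $x$ (e.g.\ if $x\in J^{-}$ then $g(x)\in J^{-}$ always, so $[x,O]\cap H=\{g:g|_{J^{-}}(x)\in O\cap J^{-}\}$ with $O\cap J^{-}$ open in $J^{-}$), and conversely every subbasic set of the product pulls back to such a trace; this makes $\Phi$ a $\tau_{p}$-homeomorphism, and the identical bookkeeping with point-stabilisers $\st_{x}$ in place of $[x,O]$ handles $\tau_{\partial}$. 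Finally (3) falls out immediately: for $x\in J$, $g(x)=x$ gives $g(x)\in J$, so $g\in H$ by regularity and $\st_{x}=\st_{x}\cap H$; under $\Phi$ the condition $g(x)=x$ is exactly $g|_{J}(x)=x$ with no constraint on the outer blocks, yielding $\st_{x}=\aut(J^{-})\times\st_{x}\times\aut(J^{+})$ with the middle stabiliser taken in $\aut(J)\curvearrowright J$.

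The hard part will be the reduction $H=\{g:g(J)=J\}$, i.e.\ that an automorphism cannot carry $J$ properly into itself; this is exactly where the $\aut(X)$-invariance of the block congruence coming from $X=Y\otimes_{\ell}J$ is indispensable, and without it $H$ would fail to be a subgroup. The only other step requiring care is the verification that the subspace and order topologies agree on the convex blocks, since it is this agreement that legitimises matching $\tau_{p}$ and $\tau_{\partial}$ factor by factor.
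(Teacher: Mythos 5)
Your proposal is correct and takes essentially the same route as the paper's proof: you identify $H$ with the setwise stabilizer of $J$, equal to $[x_0,J]$ and hence $\tau_p$-open, use the concatenation $X=J^-\lozenge J\lozenge J^+$ for the algebraic splitting of $H$, verify the topological isomorphisms by factor-by-factor comparison of subbasic sets (using that the convex open blocks carry their order topologies), and deduce (3) from (2) and regularity. The only deviation is your detour through Fact 4(3) to prove that $g(J)\subseteq J$ implies $g(J)=J$: this follows in one line from the definition of a regular interval applied to $g^{-1}$ (for $x\in J$ one has $g(x)\in J$ and $g^{-1}(g(x))=x\in J$, so regularity gives $g^{-1}(J)\subseteq J$), which is why the paper can simply write $H=[x_0,J]$ without invoking the lexicographic decomposition.
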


\begin{proof} (1) $H=[x, J]$, where $x\in J$. So $H$ is an open subgroup of $(\aut(X), \tau_p)$. 

(2) $X=J^-\lozenge J\lozenge J^+$, and any automorphism $X$ belongs to $H$ iff it is a combination of automorphisms  $J^-$, $J$ and $J^+$. Thus, $H=\aut(J)\times\aut(J^-)\times\aut(J^+)$.

Since the topology of pointwise convergence is an admissible group topology on the automorphism group of a LOTS \cite{Ovch} and \cite{Sorin} and $\tau_p\leq \tau_{\partial}$, then the topological groups in point (2) are correctly defined. The topological isomorphism of the groups in point (2) is easily verified.

(3) follows from (2) and the definition of a regular interval.
\end{proof}

If $J$ is a regular interval, then the equivalence relation $$x\sim_J y\Longleftrightarrow\forall g\in\aut(X)~ ((g(x)\in J)\Longrightarrow (g(y)\in J))$$
defines a homogeneous chain $X/J=X/\sim_J$ \cite[\S4]{Ohkuma2}. 

\begin{thm}\label{LexO_Aut}
Let $J$ be the proper regular interval of a homogeneous chain $X$. Then

{\rm (1)} $\aut(X)=(\aut(J))^{X/J}\ltimes\aut(X/J)$. 

{\rm (2)} $(\aut(X), \tau_p)=(\aut(J),  \tau_p)^{X/J}\ltimes (\aut(X/J), \tau_{\partial})$. 

{\rm (3)} $(\aut(X), \tau_{\partial})=(\aut(J),  \tau_{\partial})^{X/J}\ltimes (\aut(X/J), \tau_{\partial})$. 

{\rm (4)} $(\aut(X), \tau_p)/(\aut(J), \tau_p)^{X/J}=(\aut(X), \tau_{\partial})/(\aut(J), \tau_{\partial})^{X/J}=(\aut(X/J), \tau_{\partial})$.
\end{thm}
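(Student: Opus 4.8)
The plan is to reduce everything to the lexicographic-product picture and then promote the algebraic decomposition to a topological one. First I would invoke Fact 4(3) to fix an order-isomorphism $X=(X/J)\otimes_{\ell} J$ under which the $\sim_J$-classes are exactly the fibres $\{y\}\times J$, $y\in X/J$; since $J$ is an open interval of $(X,\tau)$ and every class is a translate $g(J)$ of $J$, all fibres are open intervals. Each $g\in\aut(X)$ respects $\sim_J$ (if $x\sim_J y$ then, applying the defining relation to $hg\in\aut(X)$, one gets $hg(x)\in J\Leftrightarrow hg(y)\in J$ for all $h$, i.e. $g(x)\sim_J g(y)$), hence $g$ permutes the fibres and induces both an $\bar g\in\aut(X/J)$ and order-isomorphisms $g_y\in\aut(J)$ determined by $g(y,j)=(\bar g(y),g_y(j))$.

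For (1) I would check that $g\mapsto\bar g$ is a surjective homomorphism whose kernel $N$ consists of the fibre-preserving automorphisms, and that $g\mapsto(g_y)_{y}$ identifies $N$ with $(\aut(J))^{X/J}$ as abstract groups (order-preservation of an arbitrary family $(g_y)_y$ is immediate from the lexicographic order). A splitting is furnished by $s(\sigma)(y,j)=(\sigma(y),j)$, which is an order-automorphism and a homomorphism $\aut(X/J)\to\aut(X)$ with $\overline{s(\sigma)}=\sigma$. A one-line computation of $s(\sigma)\,t\,s(\sigma)^{-1}$ shows the conjugation action is the coordinate permutation $(\sigma\cdot t)_y=t_{\sigma^{-1}(y)}$, so $\aut(X)=N\rtimes s(\aut(X/J))=(\aut(J))^{X/J}\ltimes\aut(X/J)$, which is (1).

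For (2) and (3) the crucial observation is that openness of the fibres forces the quotient factor to carry the permutation topology in both cases. Fixing a representative $x=(y,j)$ of each class, the condition $\bar g(y)=y$ is equivalent to $g(x)$ lying in the open interval $\{y\}\times J$; hence "landing in a prescribed fibre" is simultaneously $\tau_p$-open and $\tau_\partial$-open, and it is precisely a point-stabilizer condition on $X/J$. Thus $q\colon g\mapsto\bar g$ is continuous from $(\aut(X),\tau_p)$ and from $(\aut(X),\tau_\partial)$ onto $(\aut(X/J),\tau_\partial)$. Restricting each topology to $N$ and using that a $\tau$-neighbourhood of a point of $X$ may be shrunk inside its open fibre, I would verify that the subspace topology on $N$ is the product topology of $(\aut(J),\tau_p)^{X/J}$ for (2) and of $(\aut(J),\tau_\partial)^{X/J}$ for (3); likewise $s$ is continuous, since requiring $s(\sigma)(x)$ to stay in a prescribed open fibre amounts to prescribing the value of $\sigma$ at the corresponding point of $X/J$, a stabilizer condition. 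Together with continuity of multiplication and inversion, the bijection $g\mapsto(g\,s(\bar g)^{-1},\bar g)$ and its inverse $(t,\sigma)\mapsto t\,s(\sigma)$ are then continuous, so the isomorphism of (1) is a homeomorphism; that the semidirect product is itself a topological group (joint continuity of the coordinate-permutation action on the unrestricted power) is supplied by Theorem 8 of \cite{Sorin}. This gives (2) and (3).

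Finally, (4) follows formally from (2) and (3): in a topological semidirect product $N\rtimes H$ with the product topology, $N=N\times\{e\}$ is closed and normal and the projection onto $H$ is continuous and open, so the quotient by $N$ is $H$ with its given topology; applied to both decompositions the common map $q$ realizes $(\aut(X),\tau_p)/N$ and $(\aut(X),\tau_\partial)/N$ as $(\aut(X/J),\tau_\partial)$. The main obstacle I expect is not the algebra but pinning down the topologies in (2)--(3), namely explaining why the quotient factor is always $(\aut(X/J),\tau_\partial)$ even when $\aut(X)$ carries the weaker $\tau_p$. The resolution is exactly the openness of the regular interval and its translates, which collapses pointwise-convergence information about the fibres into discrete (stabilizer) information about $X/J$; the secondary technical point is the joint continuity of the coordinate-permutation action on the unrestricted power, for which Theorem 8 of \cite{Sorin} is invoked.
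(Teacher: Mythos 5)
Your proof is correct, but it is organized quite differently from the paper's. The paper does not prove (1) and (2) at all: it imports them wholesale as a restatement of Theorem 8 of \cite{Sorin}, and its own work is confined to (3) and (4). For (3) it matches stabilizer subbases under the already-given algebraic isomorphism: for $x\in X$, the neighbourhood $\st_x$ (under $\aut(X)\curvearrowright X$) coincides with $p_x^{-1}\st_x$ (under $\aut(J)\curvearrowright J$) $\times\,\st_{J_x}$ in the semidirect-product coordinates; (4) is then read off from (2) and (3), exactly as you do. You instead rebuild the entire decomposition from the lexicographic identification $X=(X/J)\otimes_{\ell}J$ of Fact 4(3) --- kernel $N\cong(\aut(J))^{X/J}$, splitting $s$, coordinate-permutation conjugation --- and give a unified topological argument for (2) and (3) resting on the openness of the fibres (the same fact the paper records as Proposition \ref{Roelcke precomp5-1}(1), $H=[x,J]$), invoking \cite[Theorem 8]{Sorin} only for the joint continuity of the permutation action on the unrestricted power; even that citation is dispensable, since transporting structure along your homeomorphic isomorphism from the topological group $(\aut(X),\tau_p)$ or $(\aut(X),\tau_{\partial})$ makes the semidirect product a topological group automatically. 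What your route buys is self-containedness and an explicit isolation of the key phenomenon --- why the quotient factor carries $\tau_{\partial}$ even when $\aut(X)$ carries only $\tau_p$: "landing in a prescribed open fibre" is simultaneously a $\tau_p$-open condition on $\aut(X)$ and a point-stabilizer condition on $X/J$. What the paper's route buys is brevity; note that its one substantive verification, the stabilizer-subbase coincidence in (3), is in substance the same computation as your trace-of-topology argument on $N$ together with the continuity of $q$ and $s$.
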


\begin{proof} (1) and (2) are a restatement of Theorem 8 from \cite{Sorin}.

For a point $x\in X$ by $J_x\in X/J$, we denote its equivalence class with respect to $\sim_J$, $J_x$ and $J$ are isomorphic chains, $p_x: (\aut(J))^{X/J}\to \aut(J_x)=\aut(J)$ projection of the product on the factor.

(3) follows from the coincidence of the subbase neighbourhoods: for $x\in X$, the neighbourhood $\st_x$ from the subbase (under the action $\aut(X)\curvearrowright X$) coincides with the set $p_x^{-1}\st_x (\mbox{under the action}~\aut(J)\curvearrowright J)\times\st_{J_x}$, where $x\in J_x$, from the topology subbase $(\aut(J),  \tau_{\partial})^{X/J}\ltimes (\aut(X/J), \tau_{\partial})$, and vice versa.

(4) corollary of (2) and (3).
\end{proof}

\begin{thm}\label{Roelcke precomp5-2} 
Let $X$ be a homogeneous chain that is not simple. The following conditions are equivalent:

{\rm (1)}  the topological group $(\aut (X), \tau_p)$ {\rm\big(}respectively {\rm$(\aut(X), \tau_{\partial})$\big)} is Roelcke precompact,

{\rm (2)}  for any proper regular interval $J$ topological groups $(\aut (J),  \tau_p)$ {\rm\big(}respectively\linebreak {\rm$(\aut(J), \tau_{\partial})$\big)} and $(\aut (X/J), \tau_{\partial})$  are Roelcke precompact,

{\rm (3)}  there exists a proper regular interval $J$ such that the topological groups $(\aut (J),  \tau_p)$ {\rm\big(}respectively {\rm$(\aut(J), \tau_{\partial})$\big)} and $(\aut (X/J), \tau_{\partial})$ are Roelcke precompact.
\end{thm}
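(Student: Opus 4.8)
The plan is to prove the cycle of implications $(1)\Rightarrow(2)\Rightarrow(3)\Rightarrow(1)$, treating the $\tau_p$-version and the $\tau_\partial$-version in parallel, since every step uses only the formal stability properties of Roelcke precompactness collected in Facts 1 together with the two structural decompositions already available: the semidirect product of Theorem \ref{LexO_Aut} and the product decomposition of the open subgroup $H$ in Proposition \ref{Roelcke precomp5-1}. Throughout, $N$ denotes the power $(\aut(J),\tau_p)^{X/J}$ (resp. $(\aut(J),\tau_\partial)^{X/J}$).

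For $(3)\Rightarrow(1)$ I would fix the regular interval $J$ supplied by (3). By Theorem \ref{LexO_Aut}(2) (resp. (3)), $(\aut(X),\tau_p)=(\aut(J),\tau_p)^{X/J}\ltimes(\aut(X/J),\tau_\partial)$, so $N$ is a closed normal subgroup whose factor group is $(\aut(X/J),\tau_\partial)$. Since $(\aut(J),\tau_p)$ is Roelcke precompact by hypothesis, the arbitrary power $N$ is Roelcke precompact by Fact 1(6); and $(\aut(X/J),\tau_\partial)$ is Roelcke precompact by hypothesis. Hence the extension property Fact 1(4) yields that $(\aut(X),\tau_p)$ is Roelcke precompact. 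The $\tau_\partial$-case is verbatim, using Theorem \ref{LexO_Aut}(3).

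For $(1)\Rightarrow(2)$ let $J$ be an arbitrary proper regular interval. The quotient factor is immediate: by Theorem \ref{LexO_Aut}(4) the group $(\aut(X/J),\tau_\partial)$ is a continuous homomorphic image of $(\aut(X),\tau_p)$ (resp. $(\aut(X),\tau_\partial)$), so Fact 1(3) gives its Roelcke precompactness. To obtain Roelcke precompactness of $(\aut(J),\tau_p)$ I would \emph{not} use the normal subgroup $N$ (which is in general not open), but the open subgroup $H$ of Proposition \ref{Roelcke precomp5-1}: since $H$ is open in $(\aut(X),\tau_p)$, Fact 1(2) makes $(H,\tau_p)$ Roelcke precompact, and from the \emph{finite} product decomposition $(H,\tau_p)=(\aut(J^-),\tau_p)\times(\aut(J),\tau_p)\times(\aut(J^+),\tau_p)$ of Proposition \ref{Roelcke precomp5-1}(2) the factor $(\aut(J),\tau_p)$ is Roelcke precompact by the ``only if'' direction of Fact 1(6). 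The $\tau_\partial$-case is identical.

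For $(2)\Rightarrow(3)$ there is nothing to do: because $X$ is not simple it possesses at least one proper regular interval $J$, for which (2) delivers exactly the conclusion of (3). The only genuine subtlety — the point I would verify most carefully — is the mismatch between the two decompositions one is tempted to invoke. The semidirect product of Theorem \ref{LexO_Aut} exhibits the power $\aut(J)^{X/J}$, which is convenient for the ``synthesis'' direction $(3)\Rightarrow(1)$ via Fact 1(4) but is not open; conversely, recovering the single factor $\aut(J)$ in the ``analysis'' direction $(1)\Rightarrow(2)$ forces one to route through the finite, open decomposition of $H$ rather than through $N$. Correctly assigning each factor to the appropriate one of Facts 1(2), (3), (4), (6) is the whole content of the argument.
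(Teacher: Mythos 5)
Your proposal is correct and follows essentially the same route as the paper: the paper also proves $(1)\Rightarrow(2)$ via Theorem \ref{LexO_Aut}(4) with Fact 1(3) for $(\aut(X/J),\tau_{\partial})$ and via the open subgroup $H$ of Proposition \ref{Roelcke precomp5-1} with Facts 1(2),(6) for $(\aut(J),\tau_p)$, and proves $(3)\Rightarrow(1)$ by Fact 1(4) applied to the semidirect decomposition (with Fact 1(6) handling the power, which the paper leaves implicit but you spell out). Your closing remark about which decomposition fits which direction is exactly the point of the paper's argument.
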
 

\begin{proof} (1)$\Longrightarrow$(2). Point (4) of  Theorem \ref{LexO_Aut} and point (3) of Fact 1 imply the Roelcke precompactness $(\aut (X/J), \tau_{\partial})$.  Points (1) and (2) of  Proposition \ref{Roelcke precomp5-1} and points (2) and (6) of Fact 1 imply the Roelcke precompactness $(\aut (J),  \tau_p)$ \big(respectively $(\aut(J), \tau_{\partial})$\big).

The implication (2)$\Longrightarrow$(3) is obvious.

(3)$\Longrightarrow$(1) follows from point (4) of Fact 1.
\end{proof}

From Corollary \ref{Roelcke precomp3-4}  and Theorem \ref{Roelcke precomp5-2} we have

\begin{cor}\label{coin2}
For a homogeneous chain $X$ having a simple infinite interval, the conditions of the Roelcke precompactness of groups $(\aut (X), \tau_p)$ and $(\aut(X), \tau_{\partial})$ are equivalent.\hfill $\square$
\end{cor}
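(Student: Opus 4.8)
The plan is to assemble the statement directly from the two cited results, by isolating the single point at which $\tau_p$ and $\tau_\partial$ actually differ. The guiding observation is that in Theorem~\ref{Roelcke precomp5-2} the ``respectively'' affects only the factor $\aut(J)$, whereas the quotient factor always carries the permutation topology $\tau_\partial$; hence the discrepancy between the two topologies is confined to the simple interval, where Corollary~\ref{Roelcke precomp3-4}(3) has already erased it.

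First I would dispose of the degenerate case. If $X$ is itself simple then the assertion is literally Corollary~\ref{Roelcke precomp3-4}(3), so I may assume $X$ is not simple and fix a simple infinite interval $J$, which is necessarily proper (as $X$ is not simple) and not single-point (as it is infinite); thus $J$ is a proper regular interval eligible for Theorem~\ref{Roelcke precomp5-2} and, as a chain, a simple chain eligible for Corollary~\ref{Roelcke precomp3-4}. Keeping this one fixed $J$ throughout is the heart of the argument.

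Next I would invoke Theorem~\ref{Roelcke precomp5-2} for this $J$ in both of its readings, using the equivalence of its conditions (1)--(3). The $\tau_p$-reading says that $(\aut(X),\tau_p)$ is Roelcke precompact iff $(\aut(J),\tau_p)$ and $(\aut(X/J),\tau_\partial)$ are Roelcke precompact; the $\tau_\partial$-reading says that $(\aut(X),\tau_\partial)$ is Roelcke precompact iff $(\aut(J),\tau_\partial)$ and $(\aut(X/J),\tau_\partial)$ are Roelcke precompact. Since the condition ``$(\aut(X/J),\tau_\partial)$ is Roelcke precompact'' occurs verbatim in both, the two characterisations differ solely in whether one demands Roelcke precompactness of $(\aut(J),\tau_p)$ or of $(\aut(J),\tau_\partial)$.

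Finally, because $J$ is a simple chain, Corollary~\ref{Roelcke precomp3-4}(3) furnishes the equivalence ``$(\aut(J),\tau_p)$ Roelcke precompact $\Longleftrightarrow$ $(\aut(J),\tau_\partial)$ Roelcke precompact''. Substituting this into the two characterisations above makes their right-hand sides identical, whence $(\aut(X),\tau_p)$ is Roelcke precompact iff $(\aut(X),\tau_\partial)$ is. I do not anticipate a genuine obstacle: the argument is essentially a matching of hypotheses, and the only step requiring care is the verification that a single interval $J$ simultaneously satisfies the regular-interval hypothesis of Theorem~\ref{Roelcke precomp5-2} and the simplicity hypothesis of Corollary~\ref{Roelcke precomp3-4}, so that both results may be applied to the same $J$.
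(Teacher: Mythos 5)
Your argument is correct and is exactly the paper's (unwritten) proof: the paper derives Corollary \ref{coin2} by citing precisely Theorem \ref{Roelcke precomp5-2} and Corollary \ref{Roelcke precomp3-4}, combined just as you do, namely the two readings of Theorem \ref{Roelcke precomp5-2} share the common factor $(\aut(X/J),\tau_{\partial})$, and Corollary \ref{Roelcke precomp3-4}(3) erases the remaining difference on $\aut(J)$. One caution: your step ``proper and not single-point, thus a proper regular interval'' is not a valid deduction, since an infinite proper simple interval need not be regular (e.g.\ $(0,1)\subset\mathbb R$ is infinite, proper and simple, but not regular); regularity of $J$ has to be read as part of the hypothesis, which is how the paper itself intends it --- the introduction glosses Corollary \ref{coin2} as treating chains that ``are either simple or have a simple proper regular interval'' --- and under that reading your proof is complete.
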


\begin{ex} {\rm (1)} The automorphism group of a lexicographically ordered product of two sets with $2$-homogeneous factors is Roelcke precompact in the topologies $\tau_p$ and $\tau_{\partial}$. 

In particular, automorphism groups of sets of the form $X_1\otimes_{\ell} X_2$, where $X_1,  X_2\in\{\mathbb Q, \mathbb P, \mathbb R, L, \tilde L\}$, are Roelcke precompact in topologies $\tau_p$ and $\tau_{\partial}$. 

The automorphism group of a lexicographically ordered square {\bf K} is Roelcke precompact in the topologies $\tau_p$ and $\tau_{\partial}$, since $\aut({\bf K})$ is isomorphic to the group $\aut(\mathbb R)\times\big((\aut(\mathbb R)^{\mathbb R}\ltimes\aut(\mathbb R))\times\aut(\mathbb R)$ and $\aut(\mathbb R)^{\mathbb R}\ltimes\aut(\mathbb R)$ is the  automorphism group of $\mathbb R\otimes_{\ell} \mathbb R$ {\rm\cite[Corollary 5]{Sorin}}.

{\rm(2)} Automorphism groups of lexicographically ordered products of two sets, where the second factor is a rigid set, are not Roelcke precompact in the topologies $\tau_p$ and $\tau_{\partial}$. 

In particular, the automorphism groups of the set $X\otimes_{\ell}\mathbb Z$ are not Roelcke precompact in the topologies $\tau_p$ and $\tau_{\partial}$. 

The automorphism group of a homogeneous discrete set {\rm\cite[Definition 4]{Ohkuma1}}  is not Roelcke precompact in the topologies $\tau_p$ and $\tau_{\partial}$.
\end{ex}

\begin{thm}\label{Roelcke precomp5-3} 
Let there be no simple proper regular intervals in a homogeneous chain $X$ that is not simple. Then $(\aut(X), \tau_p)$ is Roelcke precompact iff the automorphism groups $(\aut(X/J), \tau_{\partial})$, with $J$ being regular intervals, are Roelcke precompact.
\end{thm}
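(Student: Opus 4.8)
The plan is to present $(\aut(X), \tau_p)$ as a dense subgroup of the inverse limit of the groups $(\aut(X/J), \tau_\partial)$, where $J$ runs over the proper regular intervals of $X$, and then to deduce both implications from Facts~1. The implication ``$(\aut(X),\tau_p)$ Roelcke precompact $\Rightarrow$ every $(\aut(X/J),\tau_\partial)$ Roelcke precompact'' is immediate: by Theorem~\ref{LexO_Aut}(4) each $(\aut(X/J),\tau_\partial)$ is a continuous homomorphic image of $(\aut(X),\tau_p)$, so it is Roelcke precompact by Fact~1(3). The content of the theorem is the reverse implication.

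First I would fix the order-theoretic setting. The translates of a regular interval are the classes of a convex congruence of the transitive action $\aut(X)\curvearrowright X$, and these congruences form a complete chain \cite{Glass,Ohkuma2}; I index the quotients $X/J$ by this chain, directed so that a smaller interval $J$ (finer congruence) is the larger index. The hypothesis that $X$ carries no simple proper regular interval says exactly that no proper regular $J$ is minimal: being non-simple, each such $J$ contains a proper regular subinterval, which is again regular in $X$. Hence the meet of all proper congruences is the equality congruence, and for every point $x$ the intersection $\bigcap_J B_J(x)$ of the blocks $B_J(x)\ni x$ equals $\{x\}$: the blocks shrink to points.

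Next, writing $H_J:=(\aut(X/J),\tau_\partial)$, the refinement maps $X/J'\to X/J$ for $J'\subseteq J$ induce bonding homomorphisms $\varphi_{JJ'}\colon H_{J'}\to H_J$ compatible with the canonical surjections $q_J\colon (\aut(X),\tau_p)\to H_J$ of Theorem~\ref{LexO_Aut}(4). Thus $\{H_J,\varphi_{JJ'}\}$ is an inverse spectrum and the $q_J$ assemble into $q\colon (\aut(X),\tau_p)\to\varprojlim H_J$. I would verify that $q$ is a topological embedding with dense image, in parallel with the argument of Theorem~\ref{Roelcke precomp3-2}. Injectivity holds because $\bigcap_J\ker q_J=\bigcap_J(\aut(J))^{X/J}$ consists of the automorphisms sending each block to itself at every level, and since the blocks shrink to points such an automorphism fixes every $x$. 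Density holds because a basic neighbourhood in $\varprojlim H_J$ constrains only finitely many coordinates, hence (the index being a chain) a single smallest $J$, and $q_J$ is onto, so a preimage in $\aut(X)$ matches that coordinate and, by compatibility, all coarser ones.

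The one delicate step, and the main obstacle, is that the subspace topology induced by $q$ is exactly $\tau_p$ rather than something coarser. Since each $q_J$ is $\tau_p$-continuous, pullbacks of $\tau_\partial$-open sets are $\tau_p$-open, so the induced topology is contained in $\tau_p$; the reverse inclusion is where ``blocks shrink to points'' is indispensable. For a $\tau_p$-subbasic set $[x,(a,b)]=\{g\colon a<g(x)<b\}$ I would choose, using the previous paragraph, a proper regular $J$ so small that $a$, $x$, $b$ lie in three distinct blocks and $B_J(x)\subset(a,b)$; then the condition ``the block of $g(x)$ lies strictly between the blocks of $a$ and $b$'' defines a $\tau_\partial$-open subset $W$ of $H_J$ (a union of cosets of the stabiliser of the block of $x$), and $q_J^{-1}(W)\subseteq[x,(a,b)]$. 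Letting $J$ shrink, these pullbacks exhaust $[x,(a,b)]$, and the arrow-type subbasic sets are treated identically; hence the induced topology is all of $\tau_p$. Granting this, if every $(\aut(X/J),\tau_\partial)$ is Roelcke precompact then $\varprojlim H_J$ is Roelcke precompact by Fact~1(5), and therefore its dense subgroup $q(\aut(X))\cong(\aut(X),\tau_p)$ is Roelcke precompact by Fact~1(1), completing the proof.
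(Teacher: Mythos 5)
Your proposal is correct and takes essentially the same route as the paper: necessity via Theorem \ref{LexO_Aut}(4) and Fact 1(3), and sufficiency by using the absence of simple proper regular intervals to shrink the blocks (congruence classes) to points, assembling the same inverse spectrum of groups $(\aut(X/J), \tau_{\partial})$ with quotient bonding homomorphisms, embedding $(\aut(X), \tau_p)$ densely in its limit, and concluding with Facts 1(5) and 1(1). The only cosmetic difference is that you verify that the induced topology equals $\tau_p$ on arbitrary subbasic sets $[x,(a,b)]$, whereas the paper checks the corresponding inclusion only at neighbourhoods of the unit (which suffices, both being group topologies).
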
 

\begin{proof} Necessity follows from Theorem \ref{Roelcke precomp5-2}.

Sufficiency. For an arbitrary point $x\in X$, the family $S$ of all proper regular intervals containing $x$ is linearly ordered by inclusion \cite[p. 4.6]{Ohkuma2}. Their intersection is also a regular interval \cite[p. 4.7]{Ohkuma2}. If it is a proper regular interval, then it is simple (a contradiction with the condition of the theorem). Therefore, the intersection is one-point, and the open-closed proper regular intervals form the base of the topology of linear order at the point~$x$. 

For any $J\in S$, the following is defined: the quotient map $f_J:X\to X/J$ and the homomorphism $\varphi_J:(\aut(X), \tau_p)\to  (\aut(X/J), \tau_{\partial})$ by point (2) of Theorem \ref{LexO_Aut}. In this case, the pair of mappings $$(\varphi_J, f_J): ((\aut(X), \tau_p), X)\to ((\aut(X/J), \tau_{\partial}), X/J)$$ is an equivariant pair of mappings (natural actions are omitted in the notation, see \cite[p. 2.1]{Kozlov2017}). Indeed, $f_J(t)=J_t$ is a regular interval containing the point $t$, $g(J_t)=J_{g(t)}$ is a regular interval containing the point $g(t)$. Hence 
$$\varphi_J (g) (f_J(t))=J_{g(t)}=f_J(g(t)).$$

Note that for $J'>J$ $f_{J'}(J)$ is a regular interval in $X/J'$. Indeed, $J'_x, J'_y\in f_{J'}(J)$, $g'\in\aut(X/J)'$, $g'(J'_x)\in f_{J'}(J)$. Let $\varphi_{J'}(g)=g'$, $x, y\in J$ and $g(x)\in J$. Then $g(y)\in J$, $g'(J'_y)=f_{J'}(g(y))\in  f_{J'}(J)$.

For $J'>J$, an equivariant pair of mappings is defined $$(\varphi_{J'J}, f_{J'J}): ((\aut(X/J'), \tau_{\partial}), X/J')\to ((\aut(X/J), \tau_{\partial}), X/J),$$
such that $\varphi_{J'J}\circ\varphi_{J'}=\varphi_{J}$ (since $(\aut(X/J'), \tau_{\partial})=(\aut(f_{J'}(J)),  \tau_{\partial})^{(X/J')/f_{J'}(J)}\ltimes\linebreak (\aut((X/J')/f_{J'}(J)), \tau_{\partial})=(\aut(f_{J'}(J)),  \tau_{\partial})^{X/J}\ltimes (\aut(X/J), \tau_{\partial})$),  $f_{J'J}\circ f_{J'}=f_{J}$ ($X/J=(X/J')/f_{J'}(J)$). 

This define the equivariant inverse spectrum $$\big\{((\aut(X/J), \tau_{\partial}), X/J),  (\varphi_{J'J}, f_{J'J}), S\big\}$$
and the group spectrum $\big\{(\aut(X/J), \tau_{\partial}), \varphi_{J'J}, S\big\}$ \cite[p. 2.5]{Kozlov2017}, in the inverse limit of which the group $(\aut(X), \tau_p)$ is dense, if the family of homomorphisms $\varphi_{J}, J\in S$, separates points and closed sets. 

Any neighbourhood of the unit of the group $(\aut(X), \tau_p)$ has the form 
$$O=[x_1, O_1]\cap\ldots\cap [x_n, O_n].$$
Let the regular interval $J\in S$ be such that $J_{x_k}\subset O_k$, $k=1, \ldots, n$. Then $\varphi^{-1}_J(\st_{J_{x_1}}\cap\ldots\cap \st_{J_{x_n}})\subset O$ and $\st_{J_{x_1}}\cap\ldots\cap \st_{J_{x_n}}$ is the neighbourhood of the unit of the group $(\aut(X/J), \tau_{\partial})$. Thus $(\aut(X), \tau_p)$ is a dense subgroup of the Roelcke precompact group by point (5) of Fact 1 and is Roelcke precompact by point (1) of Fact 1.
\end{proof} 

{\bf Question.} Let $X$ be a homogeneous chain. Are the conditions of the Roelcke precompactness of the group $(\aut(X), \tau_{\partial})$ and the Roelcke precompactness of the group $(\aut(X), \tau_p)$ equivalent?

\end{document}